\newcommand{\SOdrei}{\mathrm{SO}(3)}
\newcommand{\SOodd}{\mathrm{SO}(2k+1)}
\newcommand{\SOn}{\mathrm{SO}(n)}
\newcommand{\LRcoef}{\mathrm{c}_{\lambda}^{\mu}(\mathfrak{d})}
\newcommand{\LRtabs}{\mathrm{LR}_\lambda^{\mu}(\mathfrak{d})}
\newcommand{\set}[2]{\{#1:#2\}}
\newcommand{\Vtensr}{V^{\otimes r}}
\newcommand{\GL}{\mathrm{GL}}
\newcommand{\SO}{\mathrm{SO}}
\newcommand{\Des}{\mathrm{Des}}
\newcommand{\Symr}{\mathfrak{S}_r}
\newcommand{\SYT}{\mathrm{SYT}}
\newtheorem{theorem}{Theorem}
\newtheorem{lemma}[theorem]{Lemma}
\newtheorem{corollary}[theorem]{Corollary}
\newtheorem{proposition}[theorem]{Proposition}
\theoremstyle{definition}
\newtheorem{definition}[theorem]{Definition}
\theoremstyle{remark}
\newtheorem{remark}[theorem]{Remark} 
\newtheorem{example}[theorem]{Example}
\newtheorem{conjecture}[theorem]{Conjecture}
\title{A Sundaram type bijection for $\SOdrei$:\\ vacillating tableaux and pairs of standard Young tableaux and orthogonal Littlewood-Richardson tableaux}
\author{Judith Jagenteufel\footnote{Institut für Diskrete Mathematik und Geometrie, Fakult\"at f\"ur Mathematik und Geoinformation, TU Wien, Austria,\newline
Supported by the European Research Council (ERC): 306445 and the Austrian science fund (FWF): P29275} }
\begin{document}

\maketitle

\abstract{Motivated by the direct-sum-decomposition of the $r^{\text{th}}$ tensor power of the defining representation of the special orthogonal group $\mathrm{SO}(2k+1)$, we present a bijection between vacillating tableaux and pairs consisting of a standard Young tableau and an orthogonal Littlewood-Richardson tableau for $\mathrm{SO}(3)$.

Our bijection preserves a suitably defined descent set. Using it we determine the quasi-symmetric expansion of the Frobenius characters of the isotypic components.

On the combinatorial side we obtain a bijection between Riordan paths and standard Young tableaux with at most 3 rows, all of even length or all of odd length.
}

\section{Introduction}

We solve a problem posed by Sundaram in her 1986 thesis~\cite{MR2941115} concerning the direct-sum-decomposition into irreducibles of tensor powers of the defining representation (also known as vector representation) of the special orthogonal group $\SO(3)$.
In particular, our main result is a bijection for $\SO(3)$ between so called vacillating tableaux and pairs consisting of an orthogonal Littlewood-Richardson tableau (recently introduced by Kwon~\cite{2015arXiv151201877K}) and a standard Young tableau.

The question of finding such a bijection arises when we consider the decomposition of a tensor power $V^{\otimes r}$ of the defining representation $V$ of $\SO(2k+1)$ as an $\SO(2k+1)\times\Symr$ representation,
\[\Vtensr=\bigoplus_{\mu} V(\mu)\otimes U(r,\mu)= \bigoplus_{\mu} V(\mu)\otimes \bigoplus_{\lambda}  c^{\mu}_{\lambda} S(\lambda),\]
where $V(\mu)$ is an irreducible representation of $\SO(2k+1)$ and $S(\lambda)$ is a Specht module. Considering the decomposition, we concentrate on $U(r,\mu)$. A basis of $U(r,\mu)$ can be indexed by vacillating tableaux. In dimension $3$ these can be regarded as Riordan paths: lattice paths with north-east, east and south-east steps, no steps below the $x$-axis and no east steps on the $x$-axis. The multiplicities $c^{\mu}_{\lambda}$ can be obtained by counting orthogonal Littlewood-Richardson tableaux, while a basis of $S(\lambda)$ is indexed by standard Young tableaux.

Our bijection is composed of two independent bijections. The first one, described by Algorithm~\ref{algoLR3} and its inverse Algorithm~\ref{UalgoLR3}, considers orthogonal Littlewood-Richardson tableaux and standard Young tableaux and reduces the problem to the case of vacillating tableaux of shape $\emptyset$. To formulate it we give an explicit combinatorial description of Kwon's orthogonal Littlewood-Richardson tableaux, which are defined in a very general way, although, too abstract for our purposes.
The second bijection, described by Algorithm~\ref{algo3dim} and its inverse Algorithm~\ref{Ualgo3dim}, is a bijection between vacillating tableaux of dimension $3$ of shape $\emptyset$ and standard Young tableaux with 3 rows, all of even length or all of odd length.

The problem of finding a bijection explaining the decomposition of $U(r,\mu)$ has been attacked several times since Sundaram's thesis; in particular by Sundaram~\cite{MR1041447}, Proctor~\cite{MR1043509} and recently by Okada~\cite{MR3604801}. Okada implicitly obtains the decomposition of $U(r,\mu)$ for the multiplicity free cases $\mu=\emptyset$ and $\mu=(1)$ using representation theoretic computations in~\cite[Theorem 5.3(3)]{MR3604801} and proves that the associated combinatorial objects have the same cardinality in~\cite[Corollary 5.5(3)]{MR3604801}. Our bijection provides a proof for these results as a special case, which are on their part special cases of Okada's work. In fact, Okada asks for bijective proofs of his results. He speculates that Fomin's machinery of growth diagrams could be employed to find such bijections. Indeed, for the symplectic group, special cases of his results were obtained by Roby~\cite{MR2716353}  and Krattenthaler~\cite{MR3534070} using this method. However, for the orthogonal group the situation appears to be quite different. In particular, at least a naive application of Fomin's ideas does not even yield the desired bijection between vacillating tableaux and the set of standard Young tableaux in question.

An advantage of our combinatorial, bijective proof is that we obtain several additional properties and consequences such as the following.

Our bijection preserves a suitably defined notion of descents: for vacillating tableaux the definition is new, whereas for standard Young tableaux we use the classical descent set introduced by Schützenberger. Thus, we obtain the quasi-symmetric expansion of the Frobenius character of the isotypic space $ U(r,\mu)$,
\[\mathrm{ch} \, U(r,\mu)=\sum_{\substack{w \text{ a vacillating tableau}}} F_{\Des(w)},\] 
where $F_D$ denotes a fundamental quasi-symmetric function.

Among others, this property justifies our bijection to be called \enquote{Sundaram-like}, as she described a similar bijection for the defining representation of the symplectic group in her thesis~\cite{MR2941115}. There exists a similar (but less complicated) definition for descents in oscillating tableaux, which are used in the symplectic case instead of vacillating tableaux, and which Sundaram's bijection preserves. Thus there also exists a similar quasi-symmetric expansion of the Frobenius character. Rubey, Sagan and Westbury obtained this result in~\cite{MR3226822}.

An essentially different description of our second bijection proves further properties.

Vacillating tableaux of shape $\emptyset$ can be regarded as paths that end at the $x$-axis, and can thus be concatenated naively. We define concatenation on standard Young tableaux by writing them side by side and adjusting entries to obtain the following: concatenation of vacillating tableaux corresponds to concatenation of standard Young tableaux (see Theorem~\ref{TheoConcat}). A similar property holds for Sundaram's bijection.

Ignoring the orthogonal Littlewood-Richardson tableaux, our bijection describes an insertion algorithm of vacillating tableaux to standard Young tableaux describing the decomposition of $U(r,\mu)$ (see Corollary~\ref{InsertionCoro}). So does Sundaram's bijection for the symplectic group, mapping oscillating tableaux to standard Young tableaux. Due to the exceptional isomorphism $\mathfrak{sl}_2\cong\mathfrak{so}_3$, our algorithm can also be regarded as an insertion algorithm for the decomposition of the isotypic components in tensor powers of the adjoint representation of $\mathrm{SL}_2$.
Thus, our algorithm is a first step towards a resolution of a closely
related problem of Stembridge.  He shows in~\cite[Theorem 6.2]{MR899903}
that such an algorithm must also exist for the adjoint representation of
$\mathrm{GL}_n$, but so far, no explicit description has been found.
\section{Background}
\label{SectionBackground}

Although our bijection considers only $k=1$ (thus $n=2k+1=3$) we provide the setup for general $k$, as this ensures a better understanding of the problem.

\subsection{Schur-Weyl duality} 
Starting with the general linear group we have \enquote{classical Schur-Weyl duality}
\begin{align*}
\Vtensr \cong \bigoplus_{\substack{\lambda \vdash r \\ \ell(\lambda) \leq n}} V^{\GL}(\lambda) \otimes S(\lambda)
\end{align*}
as $\GL(V)\times \Symr$ representations where $V$ is a complex vector space of dimension $n$ and
\begin{itemize}
\item $\GL(V)$ acts diagonally (and on each position by matrix multiplication) and $\Symr$ permutes tensor positions
\item $V^{\GL}(\lambda)$ is an irreducible representation of $\GL(V)$ and $S(\lambda)$ is a Specht module.
\end{itemize}
Consider now the restriction from $\GL(V)$ to $\SO(V)$ for a vector space $V$ of odd dimension $n=2k+1$
\begin{align*}
V(\lambda)\downarrow^{\GL(V)}_{\SO(V)} \cong \bigoplus_{\substack{\mu \text{ a partition} \\ \ell(\mu) \leq k}} c_{\lambda}^{\mu}(\mathfrak{d}) V^{SO}(\mu )
\end{align*}
where $\LRcoef$ is the multiplicity of the irreducible representation $V^{\SO}(\mu)$ of $\SO(V)$ in $V^{\GL}(\lambda)$. For $\ell(\lambda)\leq k$ this simplifies to the classical branching rule due to Littlewood. We present Kwon's definition of orthogonal Littlewood-Richardson tableaux, a set that is counted by $\LRcoef$, in Section~\ref{SubSecLRTabs}.

Combining Schur-Weyl duality and the branching rule we obtain an isomorphism of $\SO(V)\times \Symr$ representations

\begin{align*}
\Vtensr\cong \bigoplus_{\substack{\lambda \vdash r \\ \ell(\lambda) \leq n}} \Big( \bigoplus_{\substack{ \mu \text{ a partition}\\ \ell(\mu) \leq k}} c_{\lambda}^{\mu}(\mathfrak{d}) V^{SO}(\mu ) \Big) \otimes S(\lambda)
 = \bigoplus_{\substack{\mu \text{ a partition} \\ \ell(\mu) \leq k}} V^{SO}(\mu )\otimes U(r,\mu)
\end{align*}
 with isotypic components of weight $\mu$
\begin{align}
\label{eq_identity}
U(r,\mu)=\bigoplus_{\substack{\lambda \vdash r \\ \ell(\lambda) \leq n}}  \LRcoef S(\lambda).
\end{align}
 The isomorphism of $\SO(V)$ representations (e.g. Okada~\cite[Cor. 3.6]{MR3604801}),
\begin{align*}
V^{SO}(\mu )\otimes V\cong\bigoplus_{\substack{\ell(\lambda)\leq k\\ \lambda=\mu\pm \square\\ \text{or } \lambda=\mu \text{ and } \ell(\mu)=k}} V^{SO}(\lambda)
\end{align*}
implies that a basis of $U(r,\mu)$ can be indexed by so called vacillating tableaux of shape $\mu$, defined in Section~\ref{SubSecVacTab}.

Therefore, to explain the decomposition \eqref{eq_identity} combinatorially, we are interested in a bijection between vacillating tableaux and pairs that consist of a standard Young tableau and an orthogonal Littlewood-Richardson tableau, as a basis of $S(\lambda)$ can be indexed with standard Young tableaux.

Moreover, we go further and follow the approach taken by Rubey, Sagan and Westbury~\cite{MR3226822} for the symplectic group, by introducing descent sets for vacillating tableaux; see Section~\ref{SubSecDesc}. This enables us to describe the quasi-symmetric expansion of the Frobenius character. For more information on the Frobenius character see the text book by Stanley~\cite{MR1676282}.

Recall that the Frobenius character of a representation $\rho$ of the symmetric group is
\[ \mathrm{ch} \,\rho = \frac{1}{r!} \sum_{\pi \in \Symr} \mathrm{Tr}\, \rho(\pi) p_{\lambda(\pi)}\]
where $p_\lambda$ denotes the power sum symmetric function associated with $\lambda$.

For $D\subseteq\{1,2,\dots,r\}$ the \emph{fundamental quasi-symmetric function} $F_D$ is defined as 
\[F_D=\sum_{\substack{i_1\leq i_2\leq \dots\leq i_r\\ j\in D \Rightarrow i_j<i_{j+1}}} x_{i_1}x_{i_2}\dots x_{i_r}.\]
Recall that
\[\mathrm{ch}\, S(\lambda)=s_\lambda=\sum_{Q\in \SYT(\lambda)} F_{\Des(Q)}\]
where $s_\lambda$ denotes a Schur function and $\Des(Q)$ is the descent set of a standard Young tableau, see Section~\ref{SubSecDesc}. The Frobenius character can be also defined by this equation and the requirement that it be an isometry.

For $\SO(3)$ we define a descent-preserving bijection. Therefore we obtain the following theorem.
\begin{theorem}
\[\mathrm{ch} \,U(r,\mu)=\sum F_{\Des(w)},\]
where the sum runs over all vacillating tableaux $w$ of length $r$ and shape $\mu$ and $\Des(w)$ is the descent set of $w$.
\end{theorem}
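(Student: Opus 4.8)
The plan is to deduce the quasi-symmetric expansion directly from the descent-preserving bijection together with the identity~\eqref{eq_identity} for $U(r,\mu)$. First I would take the Frobenius characteristic of both sides of $U(r,\mu)=\bigoplus_{\lambda\vdash r,\,\ell(\lambda)\leq n}\LRcoef S(\lambda)$. Since $\mathrm{ch}$ is additive over direct sums and $\mathrm{ch}\,S(\lambda)=s_\lambda$, this gives $\mathrm{ch}\,U(r,\mu)=\sum_{\lambda}\LRcoef\, s_\lambda$. Next I would substitute the standard expansion $s_\lambda=\sum_{Q\in\SYT(\lambda)}F_{\Des(Q)}$ recalled in the excerpt, so that
\[
\mathrm{ch}\,U(r,\mu)=\sum_{\lambda}\LRcoef\sum_{Q\in\SYT(\lambda)}F_{\Des(Q)}
=\sum_{\lambda}\;\sum_{Q\in\SYT(\lambda)}\;\sum_{L\in\LRtabs}F_{\Des(Q)},
\]
where the innermost sum over orthogonal Littlewood-Richardson tableaux $L$ simply accounts for the multiplicity $\LRcoef=\#\LRtabs$.

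The key step is then to reinterpret this triple sum as a sum over pairs $(Q,L)$ consisting of a standard Young tableau $Q$ of some shape $\lambda$ with $\ell(\lambda)\leq n$ and an orthogonal Littlewood-Richardson tableau $L\in\LRtabs$. By the descent-preserving bijection, each such pair $(Q,L)$ corresponds bijectively to a vacillating tableau $w$ of length $r$ and shape $\mu$, and — this is the crucial property — the descent set is preserved, i.e. $\Des(w)=\Des(Q)$ under the correspondence (the descent set of the pair being defined as that of its standard Young tableau component). Hence the index set of the sum can be transported along the bijection, giving
\[
\mathrm{ch}\,U(r,\mu)=\sum_{(Q,L)}F_{\Des(Q)}=\sum_{w}F_{\Des(w)},
\]
the sum on the right running over all vacillating tableaux $w$ of length $r$ and shape $\mu$, which is exactly the claimed identity.

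I do not expect a genuine obstacle here: once the descent-preserving bijection has been established (which is the substantive content of the paper, carried out via Algorithms~\ref{algoLR3}, \ref{UalgoLR3}, \ref{algo3dim}, \ref{Ualgo3dim} and their descent-compatibility), the theorem is a purely formal consequence obtained by applying $\mathrm{ch}$, expanding Schur functions into fundamental quasi-symmetric functions, and re-indexing. The only point requiring a little care is bookkeeping: making sure the multiplicity $\LRcoef$ is correctly realized as the cardinality $\#\LRtabs$ so that the sum over $\lambda$ and $Q$ really does range over \emph{all} pairs $(Q,L)$ in the domain of the bijection, and that the notion of descent set attached to such a pair in the bijection is indeed $\Des(Q)$. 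With those conventions fixed earlier in the paper, the proof is a one-line chain of equalities, and I would present it as such.
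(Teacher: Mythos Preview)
Your proposal is correct and matches the paper's approach exactly: the paper states the theorem immediately after asserting the existence of the descent-preserving bijection with the words ``Therefore we obtain the following theorem,'' and the chain of equalities you wrote out (apply $\mathrm{ch}$ to~\eqref{eq_identity}, expand $s_\lambda$ in fundamentals, realize $\LRcoef=\#\LRtabs$, re-index via the bijection) is precisely the argument implicitly invoked. There is nothing to add.
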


 The extension to the general case, $\SO(2k+1)$, is work in progress and will be presented in another paper.

\subsection{Vacillating Tableaux}
\label{SubSecVacTab}

We define two equivalent families of objects: vacillating tableaux (as defined by Sundaram in~\cite[Def. 4.1]{MR1041447}) and highest weight words.
\begin{definition}
\begin{enumerate}
\item A ($(2k+1)$-\emph{orthogonal}) \emph{vacillating tableau} of length $r$ is a sequence of partitions $\emptyset=\mu^0, \mu^1,\dots,\mu^r=\mu$ each of at most $k$ parts, such that:
\begin{itemize}
\item $\mu^i$ and $\mu^{i+1}$ differ in at most one position,
\item $\mu^i=\mu^{i+1}$ only occurs if the $k^{\text{th}}$ part of both partitions is non-empty.
\end{itemize}
We call the final partition $\mu$ the \emph{shape} of the tableau.
\item A ($(2k+1)$-\emph{orthogonal}) \emph{highest weight word} is a word $w$ with letters in $\{\pm 1, \pm 2, \dots, \pm k, 0\}$ of length $r$ such that for every initial segment $s$ of $w$ the following holds (we write $\#i$ for the number of $i$'s in $s$):
\begin{itemize}
\item $\#i - \#(-i) \geq 0$,
\item $\#i-\#(-i)\geq \#(i+1)-\#(-i-1)$,
\item if the last letter is $0$ then $\#k-\#(-k)>0$.
\end{itemize}
We call the partition $(\#1-\#(-1),\#2-\#(-2),\dots, \#k - \#(-k))$ the \emph{weight} of a highest weight word. The vacillating tableau corresponding to a word $w$ is the sequence of weights of the initial segments of $w$.
\end{enumerate}

By abuse of terminology we refer to both objects as \emph{vacillating tableaux}.
\end{definition}

For $k=1$ we can regard vacillating tableaux as certain lattice paths (Riordan paths: Motzkin paths without horizontal steps on the $x$-axis). We therefore take a north-east step for every $1$, a south-east step for every $-1$ and an east step for every $0$. In this setting we want the path to be in the first quadrant and do not allow east steps on the $x$-axis.

\begin{example}
\label{ExamVacTab}
The same object once written as a vacillating tableau, once as a highest weight word and once as a Riordan path.

\ytableausetup{smalltableaux}
$
\begin{smallmatrix}
\text{\large{$\emptyset$}} && \ydiagram{1} && \ydiagram{2} && 
\ydiagram{2} && \ydiagram{1} && \ydiagram{1} &&
\ydiagram{2} && \ydiagram{1} && \ydiagram{1} &&
\ydiagram{2} && \ydiagram{1} && \text{\large{$\emptyset$}} && 
\ydiagram{1} && \ydiagram{1} && \text{\large{$\emptyset$}} && 
\ydiagram{1} && \text{\large{$\emptyset$}}\\
&\textstyle{1}& &\textstyle{1}& &\textstyle{0}& 
&\textstyle{\textrm{-}1}& &\textstyle{0}& &\textstyle{1}&
&\textstyle{\textrm{-}1}& &\textstyle{0}& &\textstyle{1}&
&\textstyle{\textrm{-}1}& &\textstyle{\textrm{-}1}& &\textstyle{1}&
&\textstyle{0}& &\textstyle{\textrm{-}1}& &\textstyle{1}& &\textstyle{\textrm{-}1}&
\end{smallmatrix}
$

\begin{tikzpicture}[yscale=0.35, xscale=0.73]
\draw (0,0) node[fill,circle,inner sep=0cm,minimum size=0.1cm] {}
-- (1,1) node[fill,circle,inner sep=0cm,minimum size=0.1cm] {}
-- (2,2) node[draw=black,circle,inner sep=0cm,minimum size=0.2cm] {}
-- (3,2) node[draw=black,circle,inner sep=0cm,minimum size=0.2cm] {}
-- (4,1) node[fill,circle,inner sep=0cm,minimum size=0.1cm] {}
-- (5,1) node[fill,circle,inner sep=0cm,minimum size=0.1cm] {}
-- (6,2) node[draw=black,circle,inner sep=0cm,minimum size=0.2cm] {}
-- (7,1) node[fill,circle,inner sep=0cm,minimum size=0.1cm] {}
-- (8,1) node[fill,circle,inner sep=0cm,minimum size=0.1cm] {}
-- (9,2) node[draw=black,circle,inner sep=0cm,minimum size=0.2cm] {}
-- (10,1) node[fill,circle,inner sep=0cm,minimum size=0.1cm] {}
-- (11,0) node[fill,circle,inner sep=0cm,minimum size=0.1cm] {}
-- (12,1) node[draw=black,circle,inner sep=0cm,minimum size=0.2cm] {}
-- (13,1) node[draw=black,circle,inner sep=0cm,minimum size=0.2cm] {}
-- (14,0) node[fill,circle,inner sep=0cm,minimum size=0.1cm] {}
-- (15,1) node[fill,circle,inner sep=0cm,minimum size=0.1cm] {}
-- (16,0) node[fill,circle,inner sep=0cm,minimum size=0.1cm] {};
\end{tikzpicture}

(The significance of the circled points is discussed in the next section.)
\end{example}

\subsection{Descents}
\label{SubSecDesc}

Following Rubey, Sagan and Westbury~\cite{MR3226822} we define the descent set for standard Young tableaux and vacillating tableaux.

\begin{definition}
Let $\SYT(\lambda)$ be the set of standard Young tableaux of shape $\lambda$. For a standard Young tableau $Q\in \SYT(\lambda)$ we call an entry $j$ a \emph{descent} if $j+1$ is in a row below $j$. We define the \emph{descent set} of $Q$ as:
\[\Des(Q)=\set{j}{j \text{ is a descent of } Q}.\]
\end{definition}
\begin{example}
The standard Young tableau
\[\begin{tikzpicture}[scale=0.35]
  \draw (0,4) -- (4,4);
  \draw (0,3) -- (4,3);
  \draw (0,2) -- (3,2);
  \draw (0,1) -- (2,1);
  \draw (0,4) -- (0,1);
  \draw (1,4) -- (1,1);
  \draw (2,4) -- (2,1);
  \draw (3,4) -- (3,2);
  \draw (4,4) -- (4,3);
  \draw (0.5,3.5) node {1};
  \draw (1.5,3.5) node {2};
  \draw (0.5,2.5) node {3};
  \draw (0.5,1.5) node {4};
  \draw (1.5,2.5) node {5};
  \draw (2.5,3.5) node {6};
  \draw (1.5,1.5) node {7};
  \draw (2.5,2.5) node {8};
  \draw (3.5,3.5) node {9};
\end{tikzpicture}\]
has descent set $\{2,3,6\}$.
\end{example}

\begin{definition}[Rubey, personal conversation]
We define descents for vacillating tableaux using highest weight words. We call a letter $j$ of $w$ a \emph{descent} if there exists a directed path from $w_j$ to $w_{j+1}$ in the crystal graph for the defining representation of $\SOodd$
\[  1\to 2\to \dots\to k \to 0 \to -k \to \dots \to -1\]
and $w_jw_{j+1} \neq j(-j)$ if for the initial segment $w_1,\dots,w_{j-1}$ holds $\#j-\#(-j)=0$.

 For $k=1$ this means that $j$ is a descent if the $j^{\text{th}}$ position is a $1$ followed by a $0$, or a $0$ followed by a $-1$, or a $1$ followed by a $-1$ if the numbers of $1$'s and $-1$'s before the $j^{\text{th}}$ position are different.

We define the \emph{descent set} of $w$ as
\[ \Des(w)= \set{j}{ j \text{ is a descent of } w}. \]
\end{definition}

The vacillating tableau in Example~\ref{ExamVacTab} has descent set $\{2,3,6,9,12,13\}$. The corresponding positions are circled.


\subsection{Crystal Graphs}
\label{CrystalGraphs}

In this section we summarize some properties of crystal graphs. In particular, we describe a certain crystal graph, that we need for defining orthogonal Littlewood-Richardson tableaux. For more information on crystals see the text book by Hong and Kang~\cite{MR1881971}.

Crystal graphs are certain acyclic directed graphs where vertices have finite in and out degree and each edge is labeled by a natural number. We only use crystal graphs whose vertices are labeled with certain tableaux.

For each vertex $C$ there is at most one outgoing edge labeled with $i$. If such an edge exists we denote its target by $f_i(C)$. Otherwise $f_i(C)$ is defined to be the distinguished symbol $\varnothing$.
Analogously there is at most one incoming edge labeled with $i$ and we define $e_i(C)$ as the tableau obtained by following an incoming edge labeled with $i$. We denote by $\varphi_i(C)$ (respectively $\varepsilon_i(C)$) the number of times one can apply $f_i$ (respectively $e_i$) to $C$.

We consider infinite crystal graphs. However, for the crystal graphs we consider, it holds that if we fix a natural number $\ell$ and delete all edges labeled with $\ell$ or larger, as well as all vertices that have incoming edges labeled with $\ell$ or larger, we obtain a finite crystal graph. Thus a lot of properties proven for finite ones hold also for our infinite crystal graphs.

\begin{definition}
The crystal graph of one-column tableaux is defined as follows:
\begin{enumerate}
\item The vertices are column strict tableaux with a single column and positive integers as entries.
\item Suppose that $i\in \mathbb{N}, i>0$ is an entry in a tableau $C$ but $i+1$ is not. Then $f_i(C)$ is the tableau one obtains by replacing $i$ by $i+1$.
Otherwise $f_i(C)=\varnothing$.
\item Suppose that neither $1$ nor $2$ is an entry in a tableau $C$. Then $f_0(C)$ is the tableau one obtains by adding a domino  \begin{ytableau}1\\ 2\end{ytableau} on top of $C$. Otherwise $f_0(C)=\varnothing$.
\end{enumerate}
See Figure~\ref{figCrystal1} for an example.
\end{definition}

\begin{figure}
\centering
\begin{tikzpicture}[scale=0.35]
\draw (-6.1,-3.4) node{$\vdots$};
\draw (-2.5,-3.4) node{$\vdots$};
\draw (1.5,-3.4) node{$\vdots$};
\draw (3.5,-4.4) node{$\vdots$};
\draw (5.5,-5.4) node{$\vdots$};
\draw [<-] (5.5,-4.9) -- (3.7,-2.1) node[midway, above]{\footnotesize 0};
\draw [<-] (1.7,-2.9) -- (3.3,-2.1) node[midway, above]{\footnotesize 5};
\draw [<-] (3.5,-3.9) -- (3.5,-2.1) node[midway, left]{\footnotesize 3};
\draw [<-] (1.3,-2.9) -- (-0.3,-2.1) node[midway, above]{\footnotesize 2};
\draw [<-] (-2.5,-2.9) -- (-0.7,-2.1) node[midway, above]{\footnotesize 6};
\draw [<-] (-2.7,-2.9) -- (-4.3,-2.1) node[midway, above]{\footnotesize 1};
\draw [<-] (-6.3,-2.9) -- (-4.7,-2.1) node[midway, above]{\footnotesize 7};
\draw [<-] (5.7,-4.9) -- (7.5,-4.1) node[midway, above]{\footnotesize 4};
\draw (7,-4) -- (7,0);
\draw (8,-4) -- (8,0);
\draw (7,-4) -- (8,-4);
\draw (7,-3) -- (8,-3);
\draw (7,-2) -- (8,-2);
\draw (7,-1) -- (8,-1);
\draw (7,0) -- (8,0);
\draw (7.5,-3.5) node{4};
\draw (7.5,-2.5) node{3};
\draw (7.5,-1.5) node{2};
\draw (7.5,-0.5) node{1};
\draw (3,-2) -- (3,0);
\draw (4,-2) -- (4,0);
\draw (3,-2) -- (4,-2);
\draw (3,-1) -- (4,-1);
\draw (3,0) -- (4,0);
\draw (3.5,-1.5) node{5};
\draw (3.5,-0.5) node{3};
\draw (-1,-2) -- (-1,0);
\draw (0,-2) -- (0,0);
\draw (-1,-2) -- (0,-2);
\draw (-1,-1) -- (0,-1);
\draw (-1,0) -- (0,0);
\draw (-0.5,-1.5) node{6};
\draw (-0.5,-0.5) node{2};
\draw (-5,-2) -- (-5,0);
\draw (-4,-2) -- (-4,0);
\draw (-5,-2) -- (-4,-2);
\draw (-5,-1) -- (-4,-1);
\draw (-5,0) -- (-4,0);
\draw (-4.5,-1.5) node{7};
\draw (-4.5,-0.5) node{1};
\draw [<-] (3.5,0.1) -- (1.7,0.9) node[midway, above]{\footnotesize 2};
\draw [<-] (-0.3,0.1) -- (1.3,0.9) node[midway, above]{\footnotesize 5};
\draw [<-] (-0.7,0.1) -- (-2.3,0.9) node[midway, above]{\footnotesize 1};
\draw [<-] (-4.5,0.1) -- (-2.7,0.9) node[midway, above]{\footnotesize 6};
\draw [<-] (3.7,0.1) -- (5.3,0.9) node[midway, above]{\footnotesize 4};
\draw [<-] (7.5,0.1) -- (5.7,0.9) node[midway, above]{\footnotesize 0};
\draw (5,1) -- (5,3);
\draw (6,1) -- (6,3);
\draw (5,1) -- (6,1);
\draw (5,2) -- (6,2);
\draw (5,3) -- (6,3);
\draw (5.5,1.5) node{4};
\draw (5.5,2.5) node{3};
\draw (1,1) -- (1,3);
\draw (2,1) -- (2,3);
\draw (1,1) -- (2,1);
\draw (1,2) -- (2,2);
\draw (1,3) -- (2,3);
\draw (1.5,1.5) node{5};
\draw (1.5,2.5) node{2};
\draw (-3,1) -- (-3,3);
\draw (-2,1) -- (-2,3);
\draw (-3,1) -- (-2,1);
\draw (-3,2) -- (-2,2);
\draw (-3,3) -- (-2,3);
\draw (-2.5,1.5) node{6};
\draw (-2.5,2.5) node{1};
\draw [<-] (5.5,3.1) -- (3.7,3.9) node[midway, above]{\footnotesize 2};
\draw [<-] (1.7,3.1) -- (3.3,3.9) node[midway, above]{\footnotesize 4};
\draw [<-] (1.3,3.1) -- (-0.3,3.9) node[midway, above]{\footnotesize 1};
\draw [<-] (-2.5,3.1) -- (-0.7,3.9) node[midway, above]{\footnotesize 5};
\draw (3,4) -- (3,6);
\draw (4,4) -- (4,6);
\draw (3,4) -- (4,4);
\draw (3,5) -- (4,5);
\draw (3,6) -- (4,6);
\draw (3.5,4.5) node{4};
\draw (3.5,5.5) node{2};
\draw (-1,4) -- (-1,6);
\draw (0,4) -- (0,6);
\draw (-1,4) -- (0,4);
\draw (-1,5) -- (0,5);
\draw (-1,6) -- (0,6);
\draw (-0.5,4.5) node{5};
\draw (-0.5,5.5) node{1};
\draw [<-] (3.3,6.1) -- (1.7,6.9) node[midway, below]{\footnotesize 1};
\draw [<-] (3.7,6.1) -- (5.5,6.9) node[midway, below]{\footnotesize 3};
\draw [<-] (-0.5,6.1) -- (1.3,6.9) node[midway, below]{\footnotesize 4};
\draw (1,7) -- (1,9);
\draw (2,7) -- (2,9);
\draw (1,7) -- (2,7);
\draw (1,8) -- (2,8);
\draw (1,9) -- (2,9);
\draw (1.5,7.5) node{4};
\draw (1.5,8.5) node{1};
\draw (5,7) -- (5,9);
\draw (6,7) -- (6,9);
\draw (5,7) -- (6,7);
\draw (5,8) -- (6,8);
\draw (5,9) -- (6,9);
\draw (5.5,7.5) node{3};
\draw (5.5,8.5) node{2};
\draw [->] (3.3,9.9) -- (1.5,9.1) node[midway, above]{\footnotesize 3};
\draw [->] (3.8,9.9) -- (5.5,9.1) node[midway, above]{\footnotesize 1};
\draw (3,10) -- (3,12);
\draw (4,10) -- (4,12);
\draw (3,10) -- (4,10);
\draw (3,11) -- (4,11);
\draw (3,12) -- (4,12);
\draw (3.5,10.5) node{3};
\draw (3.5,11.5) node{1};
\draw [<-] (3.5,12.1) -- (3.5,13.9) node[midway, right]{\footnotesize 2};
\draw (3,14) -- (3,16);
\draw (4,14) -- (4,16);
\draw (3,14) -- (4,14);
\draw (3,15) -- (4,15);
\draw (3,16) -- (4,16);
\draw (3.5,14.5) node{2};
\draw (3.5,15.5) node{1};
\end{tikzpicture}
\caption{The component with even-length tableaux of a crystal consisting of one-column tableaux.}
\label{figCrystal1}
\end{figure}
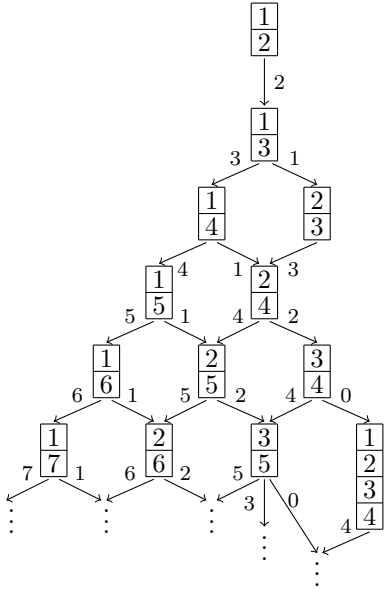

Next we introduce the tensor product of crystal graphs, using the left-to-right opposite of most of the literature.
The tensor product is associative.

\begin{definition}
The tensor product  $B_1\otimes B_2$  of two crystal graphs $B_1$ and $B_2$ is a crystal graph with vertex set $B_1 \times B_2$ and edges satisfying:
\begin{align*}
f_i(b\otimes b')&=\left\{ \begin{array}{ll} b\otimes f_i(b') & \text{if } \varepsilon_i(b)<\varphi_i(b')\\ f_i(b)\otimes b' & \text{otherwise} \end{array} \right.\\
e_i(b\otimes b')&=\left\{ \begin{array}{ll} e_i(b)\otimes b'  & \text{if } \varepsilon_i(b)>\varphi_i(b')\\ b\otimes e_i(b') & \text{otherwise} \end{array} \right.
\end{align*}
and
\begin{align*}
\varphi_i(b\otimes b')&= \varphi_i(b) + \mathrm{max}(0,\varphi_i(b')-\varepsilon_i(b))\\
\varepsilon_i(b\otimes b')&= \varepsilon_i(b') + \mathrm{max}(0,\varepsilon_i(b)-\varphi_i(b')).\\
\end{align*}
\end{definition} 


\begin{lemma}
\label{TensorLemma}
Consider a tensor $b=b_1\otimes b_2, \otimes \dots \otimes b_n$ and an index $i$.

We form a sequence of plus and minus signs according to $\varphi_i$ and $\varepsilon_i$ in the following way:

\begin{tabular}{c cc cc c cc c}
 $(\underbrace{+,+,\dots, +,}$ &$\underbrace{-,-,\dots, -,}$ &  $\underbrace{+,+,\dots, +,}$ &$\underbrace{-,-,\dots, -,}$ &$\dots $ &  $\underbrace{+,+,\dots, +,}$ &$\underbrace{-,-,\dots, -})$ \\
 $\varphi_i(b_1)$ & $\varepsilon_i(b_1)$ &$\varphi_i(b_2)$ & $\varepsilon_i(b_2)$ & $\dots$ & $\varphi_i(b_n)$ & $\varepsilon_i(b_n)$ 
\end{tabular}

Now we cancel out adjacent pairs of \enquote{$-,+$} until we obtain a new sequence $(+,+,\dots,+,-,-,\dots,-)$.

Then $f_i$ acts on the $b_j$ corresponding to the rightmost \enquote{$+$} in the new sequence and $e_i$ on the leftmost  \enquote{$-$}. Moreover, the number of  \enquote{$+$}s in the new sequence is $\varphi_i(b)$ and the number of  \enquote{$-$}s is $\varepsilon_i(b)$.
\end{lemma}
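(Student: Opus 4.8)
The plan is to prove Lemma~\ref{TensorLemma} by induction on the number $n$ of tensor factors, using associativity of the tensor product to reduce everything to the two-factor case. The base case $n=1$ is immediate from the definitions of $\varphi_i$ and $\varepsilon_i$. For the inductive step I would write $b = b_1 \otimes b''$ where $b'' = b_2 \otimes \dots \otimes b_n$, and by the induction hypothesis the signature sequence attached to $b''$ (i.e.\ the concatenation of $\varphi_i(b_2)$ plus signs, $\varepsilon_i(b_2)$ minus signs, etc.) cancels down to $\varphi_i(b'')$ plus signs followed by $\varepsilon_i(b'')$ minus signs, with $f_i$ and $e_i$ acting on the factor carrying the surviving rightmost $+$ and leftmost $-$ respectively. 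The key observation is that the cancellation procedure is \emph{confluent}: cancelling adjacent ``$-,+$'' pairs in any order yields the same reduced word, so I may first perform all cancellations internal to $b''$, then prepend the block of $\varphi_i(b_1)$ plus signs and $\varepsilon_i(b_1)$ minus signs coming from $b_1$, and finish the remaining cancellations across the $b_1\,|\,b''$ boundary.

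After this reduction the heart of the argument is a finite case analysis matching the outcome of the boundary cancellation against the tensor product rules. With $p = \varphi_i(b_1)$, $q = \varepsilon_i(b_1)$, $P = \varphi_i(b'')$, $Q = \varepsilon_i(b'')$, the word is $(+^p, -^q, +^P, -^Q)$ and cancelling the ``$-^q, +^P$'' interface removes $\min(q,P)$ pairs, leaving $(+^{p}, +^{P-\min(q,P)}, -^{q-\min(q,P)}, -^{Q})$, which is $+^{p+\max(0,P-q)}$ followed by $-^{Q+\max(0,q-P)}$. Comparing with the displayed formulas $\varphi_i(b_1\otimes b'') = \varphi_i(b_1) + \max(0,\varphi_i(b'')-\varepsilon_i(b_1))$ and $\varepsilon_i(b_1\otimes b'') = \varepsilon_i(b'') + \max(0,\varepsilon_i(b_1)-\varphi_i(b''))$ gives the count statement. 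For the action statement I split into the cases $P > q$ (equivalently $\varepsilon_i(b_1) < \varphi_i(b'')$) and $P \le q$: in the first case the surviving rightmost $+$ lies in the $b''$-block, so by the tensor rule $f_i(b_1\otimes b'') = b_1 \otimes f_i(b'')$ and by induction $f_i$ acts on the correct inner factor; in the second case the rightmost $+$ is among the $p$ signs from $b_1$, so $f_i(b_1\otimes b'') = f_i(b_1)\otimes b''$, consistent with the rule. The argument for $e_i$ and the leftmost $-$ is symmetric, swapping the roles of $\varphi$ and $\varepsilon$ and of ``left'' and ``right''.

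The main obstacle I anticipate is not any single computation but making the confluence/associativity bookkeeping airtight: one must check that after the inner cancellations the reduced word for $b''$ really can be treated as a single ``$+^P, -^Q$'' block when re-inserted next to $b_1$'s block, i.e.\ that no further cancellation inside $b''$ is forced by the presence of $b_1$, and that the factor on which $f_i$ or $e_i$ ultimately acts — as predicted by the two-step (first inside $b''$, then across the boundary) procedure — coincides with the factor predicted by running the one-step procedure on the full length-$n$ word. This is exactly where confluence of the pair-cancellation is used, together with the fact that $\varphi_i, \varepsilon_i$ of a tensor depend only on the reduced word. I would either cite the standard signature-rule lemma from Hong--Kang~\cite{MR1881971} (adapted to the left-to-right-opposite convention used here) or, if a self-contained proof is wanted, isolate the confluence claim as a short separate sublemma proved by the diamond lemma for the rewriting system ``delete an adjacent $-,+$''. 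Everything else is a routine but careful verification of the four sign-count identities and the $f_i/e_i$ localization in the two boundary cases.
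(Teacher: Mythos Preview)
Your proof strategy is correct and is the standard argument for the signature rule. Note, however, that the paper does not actually supply a proof of this lemma: it is stated as a known fact about crystal tensor products, with Hong--Kang~\cite{MR1881971} cited earlier as general background on crystals. So there is no ``paper's proof'' to compare against; the lemma is being quoted, not proved.

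That said, your plan is exactly the usual one: induct on $n$ via associativity, reduce to the two-factor case, and verify directly that cancelling the interface $-^q,+^P$ reproduces the tensor formulas for $\varphi_i$, $\varepsilon_i$, $f_i$, $e_i$. The confluence point you flag is the only genuinely subtle step, and your suggestion to isolate it as a diamond-lemma sublemma (or simply cite Hong--Kang, taking care with the opposite convention the paper uses) is the right call. One small thing to watch: when you localize $e_i$ to the leftmost surviving ``$-$'', the case split should be on $\varepsilon_i(b_1) > \varphi_i(b'')$ versus $\varepsilon_i(b_1) \le \varphi_i(b'')$, matching the paper's displayed rule for $e_i(b\otimes b')$; make sure the strict/weak inequality lines up with which factor is chosen in the boundary case $q = P$.
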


\begin{definition}
The crystal graph $B^{\mathfrak{d}}(\mu)$ where $\mu$ is a partition with at most $k=1$ part, is defined as follows.
The vertices are pairs of tableaux $(T,C)$ where $T$ is a two-column skew semistandard Young tableau and $C$ is a column strict single column such that:
\begin{itemize}
\item The \emph{tail} of a two-column skew semistandard tableau is the part of the tableau where only the first column exists. We require that the tail of $T$ have length $\mu_1$.
\item The \emph{residuum} of a two-column skew semistandard tableau is the number of positions we can shift the second column down such that the result is still a skew semistandard tableau. We require that the residuum of $T$ be less than or equal to 1.
\end{itemize}
To obtain the edges, we consider this as a subgraph of the tensor product of $3=2k+1$ one-column crystal graphs. Note that taking this point of view $\varphi_i(b_\ell)\in\{1,0\}$ for $\ell=1,2,3$.
\end{definition}

\begin{proposition}[Defining Proposition]
\label{UniqMax}
If the edges of a crystal graph are considered as a partial order, every connected component of the crystal graph has a unique maximum (see Kashiwara~\cite{ONCRYSTALBASES}). We call those maxima the \emph{highest weight elements}.
\end{proposition}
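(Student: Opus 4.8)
The plan is to reduce the assertion to the classical theorem of Kashiwara~\cite{ONCRYSTALBASES} that the crystal $B(\lambda)$ of an irreducible integrable highest weight module contains exactly one element annihilated by all the raising operators $e_i$, and to transport that theorem to the crystals at hand through their description as tensor products of one-column crystals. I would begin with the formal translation: the edges of a crystal graph record the weight-\emph{lowering} operators $f_i$, so a \emph{maximum} of the induced partial order is a \emph{source} of the directed graph, that is, a vertex $u$ with $e_i(u)=\varnothing$ for every $i$; since the graph is acyclic the reachability preorder is genuinely antisymmetric, so the word \enquote{maximum} is unambiguous, and what has to be proved is that each connected component has exactly one source and that every vertex of the component is reachable from it.

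Second, I would pass from infinite to finite crystals. Given a connected component $\mathcal{C}$ and a vertex $b\in\mathcal{C}$, finiteness of all in- and out-degrees forces only finitely many edge labels to occur among the vertices within any fixed distance of $b$; hence for $\ell$ large the deletion of all edges labelled $\ge\ell$ and of all vertices carrying an incoming edge labelled $\ge\ell$ leaves $\mathcal{C}$ intact while, as noted just before Figure~\ref{figCrystal1}, turning the ambient crystal into a finite one. So it is enough to treat finite crystals.

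Third, I would identify the structure. The one-column crystal uses the usual promotion operators $f_i$ $(i\ge1)$ together with the operator $f_0$ that adjoins a $1$-over-$2$ domino; it is integrable (each $\varphi_i,\varepsilon_i$ is $0$ or $1$), and a direct check shows that its connected components are the even-length and the odd-length columns, each with a unique source. I would verify that each of these components (and each component of its finite truncations, which consist of the columns with bounded entries) is isomorphic as a crystal to $B(\lambda)$ for a dominant integral weight $\lambda$ of the relevant Kac--Moody algebra -- an infinite-rank orthogonal type -- the one delicate verification being the interaction of $f_0$ with $e_1$ and $f_1$; Figure~\ref{figCrystal1} displays one such component. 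Because the category of crystals of integrable modules is closed under tensor products, the tensor product $B_1\otimes B_2\otimes B_3$ of one-column crystals again decomposes into components, each isomorphic to some $B(\lambda)$; and once one checks that the subgraph $B^{\mathfrak{d}}(\mu)$ used in the paper is stable under all $e_i$ and $f_i$, it is a union of such components. Applying Kashiwara's uniqueness statement to each $B(\lambda)$ and pulling it back through these isomorphisms -- and then undoing the truncation -- gives the proposition.

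I expect the third step to be the real obstacle: fitting the one-column crystal, with its non-standard operator $f_0$, into the representation-theoretic framework, and confirming that $B^{\mathfrak{d}}(\mu)$ is closed under the crystal operators, so that its connected components really are connected components of a representation crystal and hence inherit a unique source. \emph{Existence} of a source is by contrast routine: iterating the raising operators from any vertex must terminate, by acyclicity together with the finiteness secured in the second step. The substantive point is \emph{uniqueness} -- that one component cannot contain two distinct, necessarily non-isomorphic, \enquote{tops}. One could also prove uniqueness directly, by induction on the graph distance between two hypothetical sources and a case analysis of the local configuration of a pair of adjacent $i$- and $j$-strings in these explicit tableau crystals; this dispenses with the general machinery but at the cost of a much longer argument, so I would favour invoking Kashiwara's theorem.
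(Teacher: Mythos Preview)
The paper does not prove this proposition at all: it is stated as a known fact with a parenthetical citation to Kashiwara~\cite{ONCRYSTALBASES}, and no argument is given. So there is nothing to compare your proposal against except the bare assertion that the result is in the literature.

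Your sketch goes well beyond what the paper attempts, and in spirit it is reasonable---ultimately the fact does rest on Kashiwara's theory---but a couple of points deserve care. First, your reduction from infinite to finite crystals is not quite right as stated: the connected components of the one-column crystal (Figure~\ref{figCrystal1}) are themselves infinite, so a fixed component $\mathcal{C}$ will \emph{not} survive the truncation intact for any finite $\ell$; what survives is only a finite piece of it, and you then need to argue that the unique-source property for all sufficiently large truncations implies it for $\mathcal{C}$. Second, the step you flag as the real obstacle---fitting the one-column crystal with its $f_0$ into a Kac--Moody framework---is indeed the substance of Kwon's setup~\cite{2015arXiv151201877K,MR3482440}, where these are crystals for quantum ortho-symplectic superalgebras; verifying this yourself would amount to reproving a chunk of that paper. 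Given that the paper treats the proposition as a black box, a more proportionate response is simply to cite Kashiwara (and Kwon for the specific crystals), as the paper does.
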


\subsection{Orthogonal Littlewood-Richardson Tableaux}
\label{SubSecLRTabs}

In order to define orthogonal Littlewood-Richardson tableaux we define a crystal graph, of which they are a part. A proof of the following lemma can be found in Kwon~\cite[Sec. 3, pages 8, 9 and 13]{2015arXiv151201877K}.

\begin{lemma}[Defining Lemma, Kwon]
\label{LRDefLem}
For $n=3$ the crystal graph $T^\mathfrak{d}(\mu)$ consists of those elements of $B^{\mathfrak{d}}(\mu)$ described in Subsection~\ref{CrystalGraphs} that are in the same component as one of the following highest weight elements:
\begin{itemize}
\item The two-column skew tableau consists only of one column filled with $1,2,\dots, \mu_1$. The one-column tableau is empty. (This belongs to Case 1 in the next section.)
\item The two-column skew tableau consists only of one column filled with $1,2,\dots, \mu_1>0$. The one-column tableau contains exactly one cell filled with $1$. (This belongs to Case 2 or 3 in the next section.)
\item The two-column skew tableau consists of two cells in a row, both containing $1$. The one-column tableau contains exactly one cell containing $1$. (This belongs to Case 4 in the next section.)
\end{itemize}
\end{lemma}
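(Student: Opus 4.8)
The plan is to recover Lemma~\ref{LRDefLem} from Kwon's general construction~\cite{2015arXiv151201877K} by specializing to $n=3$ (that is, $k=1$) and then enumerating by hand the finitely many highest weight elements that survive. First I would recall from Kwon the abstract picture: $T^{\mathfrak{d}}(\mu)$ is the union of those connected components of $B^{\mathfrak{d}}(\mu)$ whose (unique, by Proposition~\ref{UniqMax}) highest weight element is \emph{genuine} in Kwon's sense, genuineness being precisely the condition that makes the characters of these components assemble into the branching multiplicities $\LRcoef$. So everything reduces to identifying the genuine highest weight elements of $B^{\mathfrak{d}}(\mu)$ and then checking that the three displayed skeletons exhaust them.

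To carry this out I would use the realization of $B^{\mathfrak{d}}(\mu)$ as a subgraph of the triple tensor product of the one-column crystal of Section~\ref{CrystalGraphs}, writing an element $(T,C)$ as $b=b_1\otimes b_2\otimes b_3$, with two of the factors the columns of $T$ and one equal to $C$ in the order fixed there. Since $\varphi_i(b_\ell),\varepsilon_i(b_\ell)\in\{0,1\}$, for each $i\ge 0$ the signature string of Lemma~\ref{TensorLemma} has length at most six and a very rigid shape, and imposing $\varepsilon_i(b)=0$ for all $i$ (the highest weight condition) forces each column to contain only small entries and forces strong compatibility among the three columns. A short case analysis on $\varphi_0$ (which columns avoid both $1$ and $2$, so that the $\{1,2\}$-domino move is available) together with the two constraints built into $B^{\mathfrak{d}}(\mu)$ -- tail length $=\mu_1$ and residuum $\le 1$ -- then leaves precisely the listed possibilities: the single column $1,2,\dots,\mu_1$ with $C$ empty (Case~1); the single column $1,2,\dots,\mu_1$ with $\mu_1>0$ and $C$ a single $1$ (Cases~2 and~3); and the length-two row of $1$'s with $C$ a single $1$ (Case~4). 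For each of these I would verify directly that $e_i$ annihilates it for every $i$ and that Kwon's genuineness condition holds, and conversely that genuineness rules out the remaining highest weight elements -- for instance those with a nonempty tail but empty $C$ other than Case~1, or with an entry $>1$ somewhere in $C$.

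I expect the main obstacle to be transporting Kwon's genuineness criterion faithfully. In the general $(2k+1)$ setting it is a compatibility condition (of lattice-word / switching type) between the skew tableau $T$ and the auxiliary column $C$, and one must check that, once specialized to $n=3$, it is equivalent to lying in one of these three components rather than merely to being a highest weight element: $B^{\mathfrak{d}}(\mu)$ does have other highest weight components that are \emph{not} part of $T^{\mathfrak{d}}(\mu)$, and eliminating those is the real content. A secondary, purely bookkeeping difficulty is keeping the conventions straight -- the left-to-right-opposite tensor product, the precise meaning of tail and residuum, and how shifting the second column of $T$ interacts with the crystal operators -- so that the three families line up with the Case~1 through Case~4 labelling used in the next subsection.
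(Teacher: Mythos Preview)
The paper does not give its own proof of this lemma: it explicitly labels it a ``Defining Lemma, Kwon'' and refers the reader to \cite[Sec.~3, pages 8, 9 and 13]{2015arXiv151201877K} for the argument. Immediately after the statement the paper also remarks that $T^{\mathfrak{d}}(\mu)$ is defined in Kwon's work by a list of inequalities, and that this list ``was originally described by means of crystal graphs in the way it is stated in Lemma~\ref{LRDefLem}''. So the paper's treatment is purely a citation, not an independent proof.

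Your proposal therefore goes further than the paper does: you are sketching how one would actually carry out the specialization of Kwon's general construction to $n=3$ and verify by hand that exactly the three listed highest weight elements survive. The plan you outline---use Proposition~\ref{UniqMax} to reduce to highest weight elements, realize $B^{\mathfrak{d}}(\mu)$ inside the triple tensor product of one-column crystals, exploit that each $\varphi_i(b_\ell),\varepsilon_i(b_\ell)\in\{0,1\}$ to make the signature analysis of Lemma~\ref{TensorLemma} finite, and then filter by Kwon's genuineness condition---is the natural one and matches the spirit of what Kwon does in general. You have also correctly located the real content: the case analysis on $e_i$-annihilation is routine, but transporting the genuineness criterion faithfully (and showing it kills precisely the unwanted highest weight components of $B^{\mathfrak{d}}(\mu)$) is where all the work lies. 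That step cannot be completed from the information in this paper alone; you would need to go back to \cite{2015arXiv151201877K} or \cite{MR3482440} for the exact form of the condition before your enumeration can be made rigorous.
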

See Figure~\ref{figCrystalHighWeights} for images of the highest weights elements.

\begin{figure}
\centering
\begin{tikzpicture}[scale=0.35]
\draw (0,1) -- (0,4);
\draw (1,1) -- (1,4);
\draw (0,1) -- (1,1);
\draw (0,2) -- (1,2);
\draw (0,3) -- (1,3);
\draw (0,4) -- (1,4);
\draw (0.5,1.5) node{$\mu_1$};
\draw (0.5,3.5) node{1};
\draw (0.5,2.78) node{$\vdots$};
\draw (1.5,4.5) node{$\emptyset$};
\draw (3, 4.5) node{$\emptyset$};
\draw (0,1) node{};
\end{tikzpicture}
\raisebox{0.3cm}{$\mu_1 \geq 0$}
\hspace{2cm}
\begin{tikzpicture}[scale=0.35]
\draw (0,1) -- (0,4);
\draw (1,1) -- (1,4);
\draw (0,1) -- (1,1);
\draw (0,2) -- (1,2);
\draw (0,3) -- (1,3);
\draw (0,4) -- (1,4);
\draw (0.5,1.5) node{$\mu_1$};
\draw (0.5,3.5) node{1};
\draw (0.5,2.78) node{$\vdots$};
\draw (1.5,4.5) node{$\emptyset$};
\draw (2.5,4) -- (2.5,5);
\draw (2.5,5) -- (3.5,5);
\draw (3.5,5) -- (3.5,4);
\draw (2.5,4) -- (3.5,4);
\draw (3, 4.5) node{1};
\draw (0,1);
\end{tikzpicture}
\raisebox{0.3cm}{$\mu_1 > 0$}
\hspace{2cm}
\begin{tikzpicture}[scale=0.35]
\draw (0,4) -- (0,5);
\draw (0,5) -- (2,5);
\draw (1,5) -- (1,4);
\draw (0,4) -- (2,4);
\draw (2,5) -- (2,4);
\draw (0.5, 4.5) node{1};
\draw (1.5, 4.5) node{1};
\draw (2.5,4) -- (2.5,5);
\draw (2.5,5) -- (3.5,5);
\draw (3.5,5) -- (3.5,4);
\draw (2.5,4) -- (3.5,4);
\draw (3, 4.5) node{1};
\draw (0,1);
\end{tikzpicture}
\raisebox{0.3cm}{$\mu_1 = 0$}
\caption{Highest weight elements of $T^\mathfrak{d}(\mu)$}
\label{figCrystalHighWeights}
\end{figure}
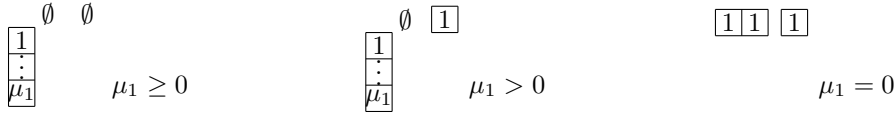

The crystal graph $T^{\mathfrak{d}}(\mu)$ is defined in Kwon~\cite[Eq. (4.7)]{2015arXiv151201877K} and~\cite[Def. 3.7]{MR3482440} as the set of (for $k=1$) pairs of a two-column skew semistandard Young tableaux where the tail has length $\mu_1$ and a column strict single column with certain dependencies characterized by a list of inequalities. This list was originally described by means of crystal graphs in the way it is stated in Lemma~\ref{LRDefLem}.

\begin{definition}
The set of \emph{orthogonal Littlewood-Richardson tableaux} is \[\LRtabs =\set{T\in T^{\mathfrak{d}}(\mu)}{\text{ $i$ occurs in $T$ exactly $\lambda'_i$ times and } \varepsilon_i(T)=0 \text{ for } i \neq 0}\]
with $\ell(\lambda)\leq n=2k+1$ and $\ell(\mu)\leq k$. 
\end{definition}

For an example see Figure~\ref{figCrystal2}.

We give another equivalent definition in the next section that we use for our algorithms.

As announced before, the set of orthogonal Littlewood-Richardson tableaux is counted by $\LRcoef$. See~\cite[Theorem 5.3]{2015arXiv151201877K}. This is one of the main results of~\cite{2015arXiv151201877K}.

\begin{theorem}[Kwon]
 $\LRcoef=|\LRtabs|$
\end{theorem}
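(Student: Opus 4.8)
The statement $\LRcoef = |\LRtabs|$ is attributed to Kwon, so the role of this excerpt is not to reprove it from scratch but to record how it follows from Kwon's crystal-theoretic framework together with the explicit description in Lemma~\ref{LRDefLem}. The plan is to realize both sides of the identity as multiplicities read off from the same crystal and then match them. First I would recall that, by definition, $\LRcoef$ is the multiplicity of the irreducible $\SO(V)$-module $V^{\SO}(\mu)$ inside the $\GL(V)$-module $V^{\GL}(\lambda)$ after restriction along $\GL(V)\downarrow \SO(V)$. On the crystal side, Kwon's construction packages this branching into the crystal $B^{\mathfrak d}$ (whose components are indexed by dominant weights for $\SO(V)$), and the subgraph $T^{\mathfrak d}(\mu)$ collects exactly those components whose highest weight element has $\SO(V)$-weight $\mu$; this is the content of the Defining Lemma~\ref{LRDefLem}, where the three listed highest weight elements exhaust the possibilities for $k=1$.

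The key step is then to translate ``multiplicity of $V^{\SO}(\mu)$ in $V^{\GL}(\lambda)$'' into ``number of highest weight elements of $\SO(V)$-weight $\mu$ among those crystal elements whose content is $\lambda$''. Concretely: a vertex of $T^{\mathfrak d}(\mu)$ in which the entry $i$ occurs $\lambda'_i$ times records a contribution to $V^{\GL}(\lambda)$ (the content being the transpose partition $\lambda'$ is the usual bookkeeping for column-strict fillings), and imposing additionally $\varepsilon_i(T)=0$ for all $i\neq 0$ selects precisely the $\SO(V)$-highest weight vectors in the corresponding weight space. Since each irreducible $\SO(V)$-summand of $V^{\GL}(\lambda)$ contributes exactly one such highest weight vector, and all the selected vectors have $\SO(V)$-weight $\mu$ because they lie in $T^{\mathfrak d}(\mu)$, the count $|\LRtabs|$ equals the multiplicity $\LRcoef$. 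This is exactly the identification made in Kwon~\cite[Theorem 5.3]{2015arXiv151201877K}, which I would cite for the representation-theoretic justification that the crystal $B^{\mathfrak d}$ indeed computes the restriction $\GL(V)\downarrow\SO(V)$ (equivalently, that the $\mathfrak d$-analogue of the Littlewood--Richardson rule holds at the level of crystals).

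The main obstacle — and the reason the theorem is stated here rather than proved in full — is establishing that the abstract crystal $B^{\mathfrak d}$ actually categorifies the branching coefficients $c^{\mu}_{\lambda}(\mathfrak d)$, i.e.\ that its character equals $\sum_{\mu} c^{\mu}_{\lambda}(\mathfrak d)\, \mathrm{ch}\,V^{\SO}(\mu)$ when restricted to content $\lambda$. That is precisely the hard analytic/combinatorial input supplied by Kwon: it rests on his ``$\mathfrak d$-crystal'' machinery and the associated Littlewood-type identities for the universal characters of the orthogonal group (Littlewood's restriction rule, extended past the stable range by subtracting correction terms indexed by partitions $\mathfrak d$). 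Granting that input, the remaining argument is the purely formal matching described above: on each side we are counting the same highest weight elements of the same crystal, filtered by content $\lambda$ and weight $\mu$. I would therefore present the proof as: (i) invoke Kwon's theorem that $B^{\mathfrak d}$ realizes the restriction; (ii) observe that highest weight elements of $\SO(V)$-weight $\mu$ and content $\lambda$ biject with irreducible $\SO(V)$-summands $V^{\SO}(\mu)\subseteq V^{\GL}(\lambda)$; (iii) note that $\LRtabs$ is by definition this set of highest weight elements, using the characterization $\varepsilon_i(T)=0$ for $i\neq 0$. Hence $|\LRtabs| = \LRcoef$.
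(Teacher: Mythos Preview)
The paper does not supply a proof of this theorem at all: it simply attributes the result to Kwon, cites \cite[Theorem~5.3]{2015arXiv151201877K}, and remarks that this is one of the main results of that reference. So there is nothing to compare your argument against in the paper itself.

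Your proposal is not a proof but an expository sketch of why Kwon's result should hold, and as such it is reasonable and broadly accurate: you correctly identify $\LRcoef$ as a branching multiplicity, correctly interpret the condition $\varepsilon_i(T)=0$ for $i\neq 0$ as singling out highest weight elements for the classical (non-zero) crystal operators, and correctly locate the hard input in Kwon's theorem that the crystal $T^{\mathfrak d}(\mu)$ categorifies the restriction. One small point of looseness: you speak of ``the crystal $B^{\mathfrak d}$'' as a single object, whereas the paper (following Kwon) works with the family $B^{\mathfrak d}(\mu)$ and its subgraph $T^{\mathfrak d}(\mu)$ indexed by $\mu$; the content-$\lambda$ filtration you describe is then applied within $T^{\mathfrak d}(\mu)$, not to a monolithic $B^{\mathfrak d}$. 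This does not affect the correctness of your outline, but if you were to write it up you would want to match the notation. In any case, since the paper treats this as a black-box citation, your sketch already goes further than what is required here.
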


Although Kwon considers $\mathrm{O}(n)$, for odd $n$ ($n=2k+1$) we get $\SO(n)$ as a special case. In this case $V(\lambda)\downarrow^{\mathrm{O}(2k+1)}_{\SO(2k+1)}$ is an irreducible $\SO(2k+1)$ representation and every such representation is isomorphic to such a restriction (see for example Okada~\cite[Sect. 2.4]{MR3604801}).

\section{The Bijection for $\SO(3)$}
\subsection{Orthogonal Littlewood-Richardson Tableaux reformulated}
\label{LRreformulated}
For our bijection we need an explicit description of Kwon's orthogonal Littlewood-Richardson tableaux for $\SO(3)$, provided by the following Theorem.

\begin{theorem}
\label{LemmaLR}
Let $\lambda$ and $\mu$ be partitions such that $\lambda\vdash r$, $ \ell(\lambda)\leq 3$ and $\ell(\mu)\leq 1$.
Then the orthogonal Littlewood-Richardson tableaux in $\LRtabs$ for $\SO(3)$ are exactly the following objects.
 (Along the $i^{\text{th}}$ line from right to left, diagrams are filled with $\{1,2,\dots, \lambda_i\}$. In particular, the number of entries $i$ occurring in this pair of tableaux is $\lambda_i'$.)\\

There are 4 cases:\\

\end{example}

In Table~\ref{TabListExample} in the appendix we provide a list of all pairs of $\lambda$ and $\mu$ and their associated orthogonal Littlewood-Richardson tableaux for $r\leq 3$.

For the proof of Theorem~\ref{LemmaLR} we need another observation in order to concentrate on the right paths in a crystal graph.

\begin{proposition}
Every element in  $T^\mathfrak{d}(\mu)$ can be obtained by applying a sequence of operators $f_{i_j} \circ f_{i_{j-1}}\circ \cdots \circ f_{i_2}\circ f_{i_1}$ to one of the top elements $T$ such that $i_\ell=0$ if and only if $f_{i_\ell}\circ \dots \circ f_{i_1}(T)$ is an orthogonal Littlewood-Richardson tableau.

We call the corresponding path in a crystal graph a \emph{good path}. The sequence of orthogonal Littlewood-Richardson tableaux on a good path is uniquely determined.
\end{proposition}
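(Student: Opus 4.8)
\emph{Overview of the plan.} The idea is to identify the orthogonal Littlewood-Richardson tableaux with the highest weight elements for the ``$\mathfrak{gl}$-part'' of the crystal, that is, for the operators $e_i,f_i$ with $i\ge 1$, and then to control how the exceptional operators $e_0,f_0$ connect these $\mathfrak{gl}$-components inside one connected component of $T^{\mathfrak{d}}(\mu)$. First I would record the dictionary: by the definition of $\LRtabs$, an element $b\in T^{\mathfrak{d}}(\mu)$ is an orthogonal Littlewood-Richardson tableau, for the unique partition $\lambda$ read off from its content, exactly when $\varepsilon_i(b)=0$ for all $i\ge 1$; equivalently, $b$ is the highest weight element of its connected component in the crystal obtained by deleting every edge labelled $0$. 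Using the remark from Section~\ref{CrystalGraphs} that truncating the labels turns these $\mathfrak{gl}$-components into finite, hence normal, crystals, every $b$ is reached from its $\mathfrak{gl}$-highest weight element $T^{(b)}$ by a chain of lowering operators $f_i$ with $i\ge 1$; since a connected $\mathfrak{gl}$-component has a unique $\mathfrak{gl}$-highest weight element, no proper descendant of $T^{(b)}$ is $\mathfrak{gl}$-highest weight, hence not an orthogonal Littlewood-Richardson tableau. In particular, applying some $f_i$ with $i\ge 1$ never yields an orthogonal Littlewood-Richardson tableau; this already gives one direction of the ``if and only if'', and it forces the label $0$ whenever the result is an orthogonal Littlewood-Richardson tableau.

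\emph{The structural claim and the chain.} The crux is the following: if $T'$ is an orthogonal Littlewood-Richardson tableau that is not the highest weight element of its connected component, then $\varepsilon_0(T')\ge 1$ and $e_0(T')$ is again an orthogonal Littlewood-Richardson tableau. I would prove this by a direct analysis of $e_0$ on $T^{\mathfrak{d}}(\mu)\subseteq B_1\otimes B_2\otimes B_3$: by Lemma~\ref{TensorLemma} and the description of $e_0,f_0$ on one-column tableaux, determine which of the three factors has its domino \begin{ytableau} 1 \\ 2 \end{ytableau} removed, and then check, using that the residuum of the two-column part of an element of $B^{\mathfrak{d}}(\mu)$ is at most $1$ and that $\varphi_i(b_\ell)\in\{0,1\}$, that the $\mathfrak{gl}$-raising operators $e_1,e_2,\dots$ still cannot act. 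Granting the claim, iterating $e_0$ from any orthogonal Littlewood-Richardson tableau stays among orthogonal Littlewood-Richardson tableaux and, since a connected component has a unique maximum (Proposition~\ref{UniqMax}), terminates, necessarily at that maximum $T$. Since in a crystal graph every vertex has at most one outgoing and at most one incoming $0$-edge, the orthogonal Littlewood-Richardson tableaux of a fixed component form a single chain $T=T_0\xrightarrow{f_0}T_1\xrightarrow{f_0}\cdots$, and the position of a given orthogonal Littlewood-Richardson tableau on this chain is determined by the $0$-part of its weight.

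\emph{Existence and uniqueness of good paths.} For an arbitrary $b$, a good path is obtained by first running down the chain above from $T$ to $T^{(b)}$ by repeated applications of $f_0$ (each intermediate element lies on the chain, hence is an orthogonal Littlewood-Richardson tableau, as required) and then applying the $\mathfrak{gl}$-lowering operators from the first step to pass from $T^{(b)}$ to $b$, which introduces neither a further $0$ nor a further orthogonal Littlewood-Richardson tableau, since the elements strictly between $T^{(b)}$ and $b$ are proper descendants of $T^{(b)}$ in its $\mathfrak{gl}$-component. For uniqueness, observe that between two consecutive $f_0$'s a good path stays inside a single $\mathfrak{gl}$-component; hence the last $f_0$ lands in the $\mathfrak{gl}$-component of $b$ at its $\mathfrak{gl}$-highest weight element, which is $T^{(b)}$, and the element that $f_0$ was applied to is $e_0(T^{(b)})$ (only one incoming $0$-edge), again an orthogonal Littlewood-Richardson tableau by the structural claim, and it is $\mathfrak{gl}$-highest weight in the same $\mathfrak{gl}$-component as the preceding $f_0$-landing, hence equal to it. Iterating shows the orthogonal Littlewood-Richardson tableaux visited along any good path to $b$ are exactly $T^{(b)},\ e_0(T^{(b)}),\ e_0^2(T^{(b)}),\dots,T$, which depends only on $b$.

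\emph{Main obstacle.} The main obstacle is the structural claim, i.e.\ that $e_0$ (equivalently $f_0$) preserves $\mathfrak{gl}$-highest-weight status \emph{inside} $T^{\mathfrak{d}}(\mu)$. This is false for crystals in general, so the argument must genuinely exploit the shape restrictions built into $B^{\mathfrak{d}}(\mu)$ for $k=1$, namely a two-column skew semistandard tableau of residuum at most $1$ together with one extra column; the case analysis of which tensor factor loses the domino, and of why no $e_i$ with $i\ge1$ can then act, is the delicate point. Note that Theorem~\ref{LemmaLR}, which would make the chain structure transparent, is unavailable here because its proof relies on the present proposition.
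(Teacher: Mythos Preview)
Your ``structural claim'' --- that $e_0$ applied to an orthogonal Littlewood--Richardson tableau is again one --- is false, and your whole existence argument rests on it. A counterexample is already visible in Figure~\ref{figCrystal2} for $T^{\mathfrak d}((2))$: the element with tail $\begin{smallmatrix}1\\3\end{smallmatrix}$ and single column $C=\begin{smallmatrix}1\\2\end{smallmatrix}$ is an orthogonal Littlewood--Richardson tableau, but $e_0$ removes the domino from $C$ and yields the element with tail $\begin{smallmatrix}1\\3\end{smallmatrix}$ and $C=\emptyset$, which has $\varepsilon_2=1$ and is therefore not one. Hence the orthogonal Littlewood--Richardson tableaux in a component do \emph{not} in general form a single $f_0$-chain, and your proposed good path ``first all $f_0$'s down the chain, then only $f_i$ with $i\ge 1$'' need not exist.

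The paper's argument avoids this entirely. Construct the path backwards from $b$: whenever the current element is not an orthogonal Littlewood--Richardson tableau, apply some $e_i$ with $i\neq 0$ (possible by definition); whenever it is one but not the global highest weight, apply $e_0$ (possible since the global highest weight is the unique element with all $\varepsilon_i=0$). Reversing gives a path from the top to $b$. The condition ``$i_\ell\neq 0\Rightarrow$ result not LR'' is automatic, since $\varepsilon_{i_\ell}$ of the result is positive; and ``$i_\ell=0\Rightarrow$ result LR'' holds because, by construction, $e_0$ was only ever applied \emph{from} an LR tableau, so $f_0$ lands \emph{on} one. No claim that $e_0$ preserves LR-ness is needed: after $e_0$ one may land on a non-LR element and then climb to the next LR tableau via $e_i$, $i\neq 0$. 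Your uniqueness argument is essentially correct and likewise does not require the structural claim: the last LR tableau on any good path is the $\mathfrak{gl}$-highest weight $T^{(b)}$ of $b$'s $\mathfrak{gl}$-component, the penultimate one is the $\mathfrak{gl}$-highest weight of the component containing $e_0(T^{(b)})$, and so on --- this is exactly what the paper extracts from Proposition~\ref{UniqMax} applied to the $0$-erased crystal.
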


\begin{proof} This follows from Proposition~\ref{UniqMax}:
as $T^\mathfrak{d}(\mu)$ is a crystal graph, every component has a unique maximal element (its highest weight element). Each component of the graph obtained from $T^\mathfrak{d}(\mu)$ by erasing the edges labeled with $0$ is also a crystal graph and therefore has also a unique maximal element.
\end{proof}

The following lemma describes orthogonal Littlewood-Richardson tableaux in $T^\mathfrak{d}(\mu)$ and proves Theorem~\ref{LemmaLR}.

\begin{lemma}
\label{lemToProofTheorem}
Let $L$ be an orthogonal Littlewood-Richardson tableau in $T^\mathfrak{d}(\mu)$. Define a \emph{gap} to be an entry $j>1$ in $L$ such that $j-1$ is not in the same column.
\begin{enumerate}
\item \label{lemPoint1Gap}
$L$ has no gap in the rightmost column.

\item \label{lemPoint2Shape} $L$ is a skew semistandard Young tableau of the outer shape indicated in Theorem~\ref{LemmaLR}. Moreover the parity constraints are as indicated.

\item \label{lemPoint3Middle1} A gap in the middle column $b_2$ of $L$ can only occur in Case 3 and can only be the largest element of $b_2$. Moreover in Case 3 the largest element of $b_2$ is $c+1$.

\item \label{lemPoint4Middle2} In Case 3 the tail has entries $b-a+1$, $c+2$, $c+3$, $\dots$, $c+\mu_1$ with $a\geq 2$. In particular, the residuum is $1$.

\item \label{lemPoint5Gaps12} In Cases 1 and 2 gaps can only occur in the tail. If $j$ is a gap, then $j-1$ is the largest entry in a column to the right. 

\item \label{lemPoint6Uneq2} In Case 2, the smallest entry of the tail is at most $c$.
\end{enumerate}
\end{lemma}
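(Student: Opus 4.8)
I would fix an orthogonal Littlewood--Richardson tableau $L=(T,C)\in T^{\mathfrak{d}}(\mu)$ and argue entirely within the realization of $T^{\mathfrak{d}}(\mu)$ as a subgraph of the threefold tensor product of one-column crystals, writing $L$ as $b_1\otimes b_2\otimes b_3$, the two columns of $T$ together with $C$. Three ingredients drive everything. (i) The signature rule of Lemma~\ref{TensorLemma}: for a fixed $i$ one forms the $\pm$-string from the values $\varphi_i(b_\ell),\varepsilon_i(b_\ell)\in\{0,1\}$, cancels adjacent ``$-+$'' pairs, and then $\varepsilon_i(L)$ is the number of surviving ``$-$''s. (ii) The defining property of $\LRtabs$, namely $\varepsilon_i(L)=0$ for all $i\ge 1$, which by (i) says that in every such string each ``$-$'' is cancelled by a strictly earlier, otherwise unmatched ``$+$''; this is a lattice-word condition on the entries of the three columns. (iii) The good-path Proposition together with Lemma~\ref{LRDefLem}, which builds $L$ from one of the three listed highest-weight elements by alternating strings of operators $f_i$ ($i\ge 1$), which fix the shape, with single operators $f_0$, whose only effect on the shape is to append a two-cell domino. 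The content condition (each $i$ occurs $\lambda'_i$ times) enters at the end to name entries once positions are known.

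\textbf{Shape and parities (Parts~\ref{lemPoint1Gap}, \ref{lemPoint2Shape}).} First I would exclude gaps in the rightmost column: if $j\in C$ and $j-1\notin C$ then $\varepsilon_{j-1}(C)=1$ puts a ``$-$'' at the right end of the $(j-1)$-string, and cancelling it requires an unmatched ``$+$'' from a column to its left containing $j-1$ but not $j$; column-strictness of $T$ and the two-column skew shape then force that column to carry its own uncancelled ``$-$'' still further left, and iterating produces a column incompatible with semistandardness or with the residuum bound --- a contradiction. Thus the rightmost column is $1,2,\dots,c$. Running the same $\varepsilon_i(L)=0$ bookkeeping on the middle and left columns determines which cells are occupied, and hence shows that $L$ is a skew semistandard Young tableau of precisely the outer shape drawn in Theorem~\ref{LemmaLR}, the gaps that survive being deferred to Parts~\ref{lemPoint3Middle1} and~\ref{lemPoint5Gaps12}. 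The parity statement I would read off from ingredient (iii): along any good path reaching $L$ the shape changes only by appended two-cell dominoes, so the parities of $a$, $b$, $c$ equal those of the highest-weight element of $L$'s component, and together with the weakly decreasing content $(\lambda'_1,\lambda'_2,\dots)$ and the residuum bound these are exactly the parities of the four cases; the same data sort $L$ into the four cases by whether $C=\emptyset$, whether $\mu=\emptyset$, and what the residuum is.

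\textbf{Local refinements (Parts~\ref{lemPoint3Middle1}--\ref{lemPoint6Uneq2}).} A gap $j$ in the middle column again yields a ``$-$'' in the $(j-1)$-string that can only be cancelled by a ``$+$'' coming from a cell of the left column that lies in the tail at the matching height; following the residuum --- which measures how far $T$'s second column may slide down --- this is possible only when the residuum is $1$, i.e.\ in Case~3, and only for the top entry of the middle column, whose value is forced to be $c+1$ by counting occurrences of $c$ (Part~\ref{lemPoint3Middle1}). In Case~3, once the rightmost column is $1,\dots,c$, the middle one is $1,\dots,b$ slid down by one, and $1,\dots,a$ sits above the tail, the $\mu_1$ tail entries are determined as a multiset and then ordered by semistandardness together with $\varepsilon_i(L)=0$ into $b-a+1,c+2,\dots,c+\mu_1$, which forces $a\ge 2$ and residuum exactly $1$ (Part~\ref{lemPoint4Middle2}). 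The residuum-$0$ version of the same bookkeeping gives Part~\ref{lemPoint5Gaps12}: a surviving ``$-$'' from a tail gap $j$ is cancellable only by a ``$+$'' from a column to the right whose \emph{largest} entry is $j-1$. Finally in Case~2 the extra inequality $\lambda_1<c+\mu_1$ forces at least one tail entry to be $\le c$, hence --- by semistandardness --- the smallest one (Part~\ref{lemPoint6Uneq2}).

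\textbf{Expected main difficulty.} The delicate point is the coupling, in Cases~2 and~3, between the residuum and the entries that may legally occupy the tail: the residuum ties together the heights of the two columns of $T$ and the admissible tail entries, and one must check that the lattice condition $\varepsilon_i(L)=0$ for all $i\ge 1$ leaves room for exactly one placement --- equivalently, that every ``$-$'' in every $\pm$-string cancels precisely when the entries are arranged as asserted. This is a finite but intricate verification, and organizing it as the chain above, with each part supplying the hypotheses for the next, is what makes it tractable.
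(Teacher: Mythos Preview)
Your overall architecture---reduce everything to the lattice condition $\varepsilon_i(L)=0$ read through the signature rule---is reasonable, but there is a recurring slip in how you apply Lemma~\ref{TensorLemma}. In this paper's convention one cancels adjacent ``$-,+$'' pairs, so a ``$-$'' can only be cancelled by a ``$+$'' \emph{to its right}. For Part~\ref{lemPoint1Gap} this means a ``$-$'' coming from $\varepsilon_{j-1}(b_3)$ sits at the far right of the string and cannot be cancelled at all; the contradiction is immediate and your iteration through columns to the left is both unnecessary and based on the wrong direction. The same slip reappears in Part~\ref{lemPoint3Middle1}: a ``$-$'' from $\varepsilon_{j-1}(b_2)$ must be cancelled by $\varphi_{j-1}(b_3)>0$, i.e.\ by the \emph{right} column $C$, not by ``a cell of the left column that lies in the tail''. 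With the direction corrected, one gets $j-1\in C$, $j\notin C$, hence $j-1=c$ by Part~\ref{lemPoint1Gap}, and semistandardness forces the shape of Case~3---exactly the paper's argument. Curiously, in Part~\ref{lemPoint5Gaps12} you do look to the right, so your treatment is internally inconsistent.

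Beyond this, your approach differs from the paper's in a more structural way. You try to argue \emph{statically}: fix $L$, impose $\varepsilon_i(L)=0$ for all $i\ge 1$, and solve for the entries. The paper instead argues \emph{dynamically} by induction along a good path, tracking how each application of $f_0$ (domino addition) preserves the asserted shape and entries, with $f_i$ for $i\ge 1$ being shape-preserving. This inductive approach is what actually carries Parts~\ref{lemPoint2Shape}, the second half of~\ref{lemPoint3Middle1} (that in Case~3 the bottom of $b_2$ is \emph{always} $c+1$, gap or not), \ref{lemPoint4Middle2}, and~\ref{lemPoint6Uneq2}. Your static arguments for these are too thin: for Part~\ref{lemPoint6Uneq2} you invoke $\lambda_1<c+\mu_1$, but that inequality is part of the characterization you are proving, so the reasoning is circular; and ``counting occurrences of $c$'' does not by itself pin down the bottom entry of $b_2$ in Case~3. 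The paper's induction, following the good path from the explicit highest-weight elements of Lemma~\ref{LRDefLem}, provides exactly the extra leverage needed here, and you should adopt it for those parts.
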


\begin{proof}
Before proving the lemma we recall that:
\begin{itemize}
\item $\LRtabs$ is the set of tableaux $L$ in $T^\mathfrak{d}(\mu)$ such that $i$ occurs in $L$ exactly $\lambda'_i$ times and $\varepsilon_i(L)=0$ for $ i \neq 0 $.
\item The length of the tail in $T^\mathfrak{d}(\mu)$ is always $\mu_1$, thus constant.
\end{itemize}
\begin{enumerate}
\item Suppose that the rightmost column contains a gap $j$. Then the sequence in Lemma~\ref{TensorLemma} ends with a \enquote{$-$} because $\varepsilon_{j-1}(b_3)>0$ and thus $\varepsilon_{j-1}(b_1\otimes b_2 \otimes b_3)=\varepsilon_{j-1}(L)>0$.

\item  We use induction on the number of Littlewood-Richardson tableaux on a good path. Let us consider the highest weight elements of $T^\mathfrak{d}(\mu)$

\hspace{1.8cm}\begin{tikzpicture}[scale=0.35]
\draw (-2.5,3.5) node{$L_1:$};
\draw (-2,1.5) node{($\mu_1\geq 0$)};
\draw (0,1) -- (0,4);
\draw (1,1) -- (1,4);
\draw (0,1) -- (1,1);
\draw (0,2) -- (1,2);
\draw (0,3) -- (1,3);
\draw (0,4) -- (1,4);
\draw (0.5,1.5) node{$\mu_1$};
\draw (0.5,3.5) node{1};
\draw (0.5,2.78) node{$\vdots$};
\draw (3,5) node{};
\end{tikzpicture}\raisebox{0.35cm},
\hspace{0.7cm}
\begin{tikzpicture}[scale=0.35]
\draw (-2.5,3.5) node{$L_{23}:$};
\draw (-2,1.5) node{($\mu_1 > 0$)};
\draw (0,1) -- (0,4);
\draw (1,1) -- (1,4);
\draw (0,1) -- (1,1);
\draw (0,2) -- (1,2);
\draw (0,3) -- (1,3);
\draw (0,4) -- (1,4);
\draw (0.5,1.5) node{$\mu_1$};
\draw (0.5,3.5) node{1};
\draw (0.5,2.78) node{$\vdots$};
\draw (2,4) -- (2,5);
\draw (2,5) -- (3,5);
\draw (3,5) -- (3,4);
\draw (2,4) -- (3,4);
\draw (2.5, 4.5) node{1};
\end{tikzpicture}\raisebox{0.35cm},
\hspace{0.7cm}
\begin{tikzpicture}[scale=0.35]
\draw (-2.5,3.5) node{$L_4:$};
\draw (-2,1.5) node{($\mu_1 = 0$)};
\draw (0,4) -- (0,5);
\draw (0,5) -- (2,5);
\draw (1,5) -- (1,4);
\draw (0,4) -- (2,4);
\draw (2,5) -- (2,4);
\draw (0.5, 4.5) node{1};
\draw (1.5, 4.5) node{1};
\draw (2,5) -- (3,5);
\draw (3,5) -- (3,4);
\draw (2,4) -- (3,4);
\draw (2.5, 4.5) node{1};
\draw (0,1);
\end{tikzpicture}

which belong to Case 1, Case 2 or 3 and Case 4 respectively. As illustrated for $T^\mathfrak{d}(1)$ in Figure~\ref{figCrystal2}, the tableaux belonging to Case 2 and Case 3 occur in the same component of the crystal graph. Moreover, following any good path beginning in $L_{23}$ the next orthogonal Littlewood-Richardson tableaux encountered is one of the following four:

\hspace{1.2cm}\begin{tikzpicture}[scale=0.35]
\draw (0,1) -- (0,4);
\draw (1,1) -- (1,4);
\draw (0,1) -- (1,1);
\draw (0,2) -- (1,2);
\draw (0,3) -- (1,3);
\draw (0,4) -- (1,4);
\draw (0.5,1.5) node{$\mu_1$};
\draw (0.5,3.5) node{1};
\draw (0.5,2.78) node{$\vdots$};
\draw (2,4) -- (2,7);
\draw (3,4) -- (3,7);
\draw (2,4) -- (3,4);
\draw (2,5) -- (3,5);
\draw (2,6) -- (3,6);
\draw (2,7) -- (3,7);
\draw (2.5, 6.5) node{1};
\draw (2.5, 5.5) node{2};
\draw (2.5, 4.5) node{3};
\draw (0,-1);
\draw (3,1.5) node{($\mu_1\geq 1$)};
\end{tikzpicture}\raisebox{1cm},
\hspace{0.7cm}
\begin{tikzpicture}[scale=0.35]
\draw (-0.4,0) -- (-0.4,4);
\draw (1.4,0) -- (1.4,4);
\draw (-0.4,0) -- (1.4,0);
\draw (-0.4,1) -- (1.4,1);
\draw (-0.4,2) -- (1.4,2);
\draw (-0.4,3) -- (1.4,3);
\draw (-0.4,4) -- (1.4,4);
\draw (0.5,0.5) node{$\scriptstyle \mu_1+2$};
\draw (0.5,3.5) node{1};
\draw (0.5,2.5) node{4};
\draw (0.5,1.78) node{$\vdots$};
\draw (2.4,4) -- (2.4,7);
\draw (3.4,4) -- (3.4,7);
\draw (2.4,4) -- (3.4,4);
\draw (2.4,5) -- (3.4,5);
\draw (2.4,6) -- (3.4,6);
\draw (2.4,7) -- (3.4,7);
\draw (2.9, 6.5) node{1};
\draw (2.9, 5.5) node{2};
\draw (2.9, 4.5) node{3};
\draw (0,-1);
\draw (3.5,0.5) node{($\mu_1\geq 2$)};
\end{tikzpicture}\raisebox{1cm},
\hspace{0.7cm}
\begin{tikzpicture}[scale=0.35]
\draw (-0.4,-1) -- (-0.4,4);
\draw (1.4,-1) -- (1.4,4);
\draw (-0.4,-1) -- (1.4,-1);
\draw (-0.4,0) -- (1.4,0);
\draw (-0.4,1) -- (1.4,1);
\draw (-0.4,2) -- (1.4,2);
\draw (-0.4,3) -- (1.4,3);
\draw (-0.4,4) -- (1.4,4);
\draw (0.5,-0.5) node{$\scriptstyle \mu_1+1$};
\draw (0.5,3.5) node{1};
\draw (0.5,2.5) node{2};
\draw (0.5,1.5) node{4};
\draw (0.5,0.78) node{$\vdots$};
\draw (2.4,4) -- (2.4,7);
\draw (3.4,4) -- (3.4,7);
\draw (2.4,4) -- (3.4,4);
\draw (2.4,5) -- (3.4,5);
\draw (2.4,6) -- (3.4,6);
\draw (2.4,7) -- (3.4,7);
\draw (2.9, 6.5) node{1};
\draw (2.9, 5.5) node{2};
\draw (2.9, 4.5) node{3};
\draw (3.5,-0.5) node{($\mu_1\geq 3$)};
\end{tikzpicture}
\hspace{0.1cm} \raisebox{1cm}{and} \hspace{0.2cm}
\begin{tikzpicture}[scale=0.35]
\draw (-0.4,0) -- (-0.4,4);
\draw (1.4,0) -- (1.4,4);
\draw (-0.4,0) -- (1.4,0);
\draw (-0.4,1) -- (1.4,1);
\draw (-0.4,2) -- (1.4,2);
\draw (-0.4,3) -- (1.4,3);
\draw (-0.4,4) -- (1.4,4);
\draw (0.5,0.5) node{$\scriptstyle \mu_1+1$};
\draw (0.5,3.5) node{1};
\draw (0.5,2.5) node{3};
\draw (0.5,1.78) node{$\vdots$};
\draw (1.4,4) -- (1.4,6);
\draw (2.4,4) -- (2.4,6);
\draw (3.4,5) -- (3.4,6);
\draw (1.4,4) -- (2.4,4);
\draw (1.4,5) -- (3.4,5);
\draw (1.4,6) -- (3.4,6);
\draw (1.9, 4.5) node{2};
\draw (1.9, 5.5) node{1};
\draw (2.9, 5.5) node{1};
\draw (0,-1);
\draw (3.5,0.5) node{($\mu_1\geq 1$)};
\end{tikzpicture}

The first three of these belong to Case 2, the last belongs to Case 3. Together with $L_1$, $L_{23}$ and $L_4$, these tableaux constitute the base case of the induction. Clearly, they are all skew semistandard of the required shape.

In the inductive step we show additionally that the Littlewood-Richardson tableaux on a good path all belong to the same case, with the exception of $L_{23}$.

For $i>0$ the generators $f_i$ map skew semistandard Young tableaux to skew semistandard Young tableaux of the same shape (see~\cite{ONCRYSTALBASES}). Thus it remains to consider $f_0$, which adds a domino to one of the three columns and yields an orthogonal Littlewood-Richardson tableau.

If the domino is added on top of the rightmost column there is nothing to check.

Thus, suppose now that the domino is added to the middle column. We have to show that the result is a skew semistandard Young tableau of the same outer shape as the preimage. Item~\ref{lemPoint1Gap} shows that there are no gaps in the rightmost column. If the middle column is non-empty, its smallest entry has a right neighbor by induction, and is at least $3$. Otherwise, the rightmost column contains at least two entries, because if it were empty, the domino could not be added to the middle column by Lemma~\ref{TensorLemma}. In both cases $f_0$ preserves the outer shape and semistandardness. Moreover the resulting tableau belongs to the same case as the preceding orthogonal Littlewood-Richardson tableau on the good path.

If the domino is added to the leftmost column, there is nothing to check because the first two columns are skew semistandard by definition of $B^{\mathfrak{d}}(\mu)$.

Finally, in Case 2 we have $b<c$ because $c$ is odd.

\item If $j$ is a gap in the middle column, $\varepsilon_{j-1}(b_2)>0$. Because of $\varepsilon_{j-1}(L)=\varepsilon_{j-1}(b_1\otimes b_2 \otimes b_3)=0$, we have $\varphi_{j-1}(b_3)>0$. By item~\ref{lemPoint1Gap} there cannot be a gap in $b_3$, so $j-1$ must be $b_3$'s largest entry. In particular $j=c+1$. Because of semistandardness, this implies that the cell containing the gap of $b_2$ has no right neighbor. The only shape compatible with this requirement is that of Case 3.

Suppose now that $b_2$ contains no gap. By induction, $b_2$ contains entries $1$ and $2$. Therefore, $\varepsilon_0(b_2)=1$, and Lemma~\ref{TensorLemma} implies that the domino was not added to the rightmost column to obtain $L$. Let $L'$ be the orthogonal Littlewood-Richardson tableau preceding $L$ on the good path. Then the rightmost columns of $L$ and $L'$ are the same. By induction, the largest entry in the middle column of $L'$ is $c+1$. Because the generators $f_i$ only increase entries, the largest entry of the middle column of $L$, is at least $c+1$. The claim follows, since the length of the middle column is at most $c+1$, and it contains no gaps.

\item Let $t$ be the smallest entry of the tail and $s=b-1$ the second largest element of the middle column. We show $t \leq s$ for all tableaux on the good path. Suppose that $t=s$, and consider the action of $f_t$. If there is a gap in the middle column, $\varphi_t(b_2)>0$ and the smallest entry in the tail is not modified. Otherwise, the rightmost column also contains $t=c$, so $\varphi_t(b_3)>0$ and $\varepsilon_t(b_2)=0$, so the entry in the tail is not modified either.

Because the residuum is at most $1$, the second entry of the tail, if present, is at least $c+2$. Because $c+2-(b-a-1)\geq 2$ it is a gap. However, an entry $j>c+2$ cannot be a gap, because $\varepsilon_{j-1}(L)=0$ but $\varphi_{j-1}(b_2)=\varphi_{j-1}(b_3)=0$ for $j>c+2$.

\item Suppose that $j$ is a gap in $b_1$. Because $\varepsilon_{j-1}(L)=0$, either $\varphi_{j-1}(b_2)>0$ or $\varphi_{j-1}(b_3)>0$. Since there are no gaps in $b_2$ and $b_3$, in both cases $j-1$ is the largest entry in its column. Semistandardness implies that $j$ must be in the tail.

\item Let $t=b-a+1$ be the smallest entry of the tail. We show that $t\leq c$ for all tableaux on the good path. Suppose that $t=c$ and consider the action of $f_t$. Since there is no gap in the middle column, $\varepsilon_t(b_2)=0$, so $f_t$ modifies the largest element of $b_3$. Note that this argument does not hold for Case 1 due to the base case $L_1$.\qedhere
\end{enumerate}
\end{proof}

\subsection{A descent-preserving bijection}
\label{subsectionBijection}

In this section we describe our bijection for $\SO(3)$. It maps pairs consisting of a standard Young tableaux with at most 3 rows and an orthogonal Littlewood-Richardson tableau for $\SO(3)$ to vacillating tableaux with at most one row. Moreover, it preserves descents.

The bijection is composed of two parts. First we use the information of the orthogonal Littlewood-Richardson tableau to obtain a bigger standard Young tableau, all of whose row lengths have the same parity. In the second step we map this new standard Young tableau to a vacillating tableau of shape $\emptyset$. To obtain a vacillating tableau of the desired shape, we delete positions at the end of it such that the number of deleted positions corresponds to the number of added positions in the first part.

We formulate now our bijection using Algorithms~\ref{algoLR3} and~\ref{algo3dim}, give a complete example as well as formulate the reversed bijection using the reversed Algorithms~\ref{UalgoLR3} and~\ref{Ualgo3dim}. In Table~\ref{TabNotation} we present notation used in Algorithms~\ref{algo3dim} and~\ref{Ualgo3dim}.

In Section~\ref{Sub1stAlgo} we examine the details of the algorithms describing the first part of the bijection. In Section~\ref{Sub2ndAlgo} we give details for the second part.

\subsubsection{Formulation of the bijection}
\label{bijection}

Starting with $(Q,L)$ where $Q$ is a standard Young tableau in $\SYT(\lambda)$ and $L$ an orthogonal Littlewood-Richardson tableau in $\LRtabs$ we use Algorithm~\ref{algoLR3} which adds $\mu_1$ cells to $Q$ to obtain a standard Young tableau $\tilde{Q}$ whose row lengths all have the same parity.

Now we distinguish two cases. If our resulting tableau $\tilde{Q}$ consists of rows of even length this is the tableau we use in Algorithm~\ref{algo3dim}. Otherwise, thus when $\tilde{Q}$ consists of three rows of odd length, we look for the largest entry $e$ in $\tilde{Q}$ and attach $e+1$ to the first row, $e+2$ to the second one and $e+3$ to the third one. We obtain a standard Young tableau with rows of even length that we use in in Algorithm~\ref{algo3dim}.

We continue using Algorithm~\ref{algo3dim} to obtain a vacillating tableau $\tilde{V}$ of shape $\emptyset$ from our modified standard Young tableau $\tilde{Q}$.

Once again we distinguish the two cases from before. If we added entries to $\tilde{Q}$ to obtain a tableau with rows of even length from one with rows of odd length, we delete now the last three positions of $\tilde{V}$. Those are always $1,0,-1$, (see Lemma~\ref{Lem10-1}). Otherwise we do not change $\tilde{V}$.
Therefore we obtain once again a vacillating tableau $\tilde{V}$ of shape $\emptyset$.

We finish our algorithm by deleting the last $\mu_1$ entries of $\tilde{V}$ to obtain a vacillating tableau $V$ of shape $\mu$ and length $r=|\lambda|$. Those are always $-1$ steps (see Lemma~\ref{LemHorizonal1} and Lemma~\ref{LemHorizonal2}).

\begin{figure}
\begin{tikzpicture}[scale=0.35]
  \draw (0,4) -- (4,4);
  \draw (0,3) -- (4,3);
  \draw (0,2) -- (3,2);
  \draw (0,1) -- (2,1);
  \draw (0,4) -- (0,1);
  \draw (1,4) -- (1,1);
  \draw (2,4) -- (2,1);
  \draw (3,4) -- (3,2);
  \draw (4,4) -- (4,3);
  \draw (0.5,3.5) node {1};
  \draw (1.5,3.5) node {2};
  \draw (0.5,2.5) node {3};
  \draw (0.5,1.5) node {4};
  \draw (1.5,2.5) node {5};
  \draw (2.5,3.5) node {6};
  \draw (1.5,1.5) node {7};
  \draw (2.5,2.5) node {8};
  \draw (3.5,3.5) node {9};
  \draw (5,3) -- (7,3);
  \draw (5,2) -- (7,2);
  \draw (5,1) -- (7,1);
  \draw (5,0) -- (6,0);
  \draw (5,-1) -- (6,-1);
  \draw (5,3) -- (5,-1);
  \draw (6,3) -- (6,-1);
  \draw (7,3) -- (7,1);
  \draw (5.5,2.5) node {1};
  \draw (6.5,2.5) node {1};
  \draw (5.5,1.5) node {2};
  \draw (6.5,1.5) node {2};
  \draw (5.5,0.5) node {3};
  \draw (5.5,-0.5) node {4};
  \draw (7.5,4) -- (8.5,4);
  \draw (7.5,3) -- (8.5,3);
  \draw (7.5,2) -- (8.5,2);
  \draw (7.5,1) -- (8.5,1);
  \draw (7.5,4) -- (7.5,1);
  \draw (8.5,4) -- (8.5,1);
  \draw (8,3.5) node {1};
  \draw (8,2.5) node {2};
  \draw (8,1.5) node {3};
  \draw (0,-0.5) node {};
  \draw (4,5.3) node {(SYT, orth. LR tab.)};
\end{tikzpicture}
\hspace{0.15cm}
\begin{tikzpicture}[scale=0.35]
 \draw (0,-0.5) node{};
 \draw (0,5) node{};
 \draw [->,decorate,
decoration={snake,amplitude=.4mm,segment length=2mm,post length=1mm, pre length=1mm}] (0,3.5) -- (2,3.5) node[midway, above] {Alg.~\ref{algoLR3}};
 \draw [<-,decorate,
decoration={snake,amplitude=.4mm,segment length=2mm,post length=1mm, pre length=1mm}] (0,2.5) -- (2,2.5)node[midway, below] {Alg.~\ref{UalgoLR3}};
\end{tikzpicture}
\begin{tikzpicture}[scale=0.35]
  \draw (0,4) -- (5,4);
  \draw (0,3) -- (5,3);
  \draw (0,2) -- (3,2);
  \draw (0,1) -- (3,1);
  \draw (0,4) -- (0,1);
  \draw (1,4) -- (1,1);
  \draw (2,4) -- (2,1);
  \draw (3,4) -- (3,1);
  \draw (4,4) -- (4,3);
  \draw (5,4) -- (5,3);
  \draw (0.5,3.5) node {1};
  \draw (1.5,3.5) node {2};
  \draw (0.5,2.5) node {3};
  \draw (0.5,1.5) node {4};
  \draw (1.5,2.5) node {5};
  \draw (2.5,3.5) node {6};
  \draw (1.5,1.5) node {7};
  \draw (2.5,2.5) node {8};
  \draw (3.5,3.5) node {9};
  \draw (2.5,1.5) node[color=blue] {10};
  \draw (4.5,3.5) node[color=blue] {11};
  \draw (5,1.5) node[color=blue] {$\mu=(2)$};
  \draw (0,-0.5) node {};
  \draw (4,5) node{(SYT all even/odd, partition)};
\end{tikzpicture}
\begin{tikzpicture}[scale=0.35]
 \draw (0,-0.5) node{};
 \draw (0,5) node{};
 \draw [<->,decorate,
decoration={snake,amplitude=.4mm,segment length=2mm,post length=1mm, pre length=1mm}] (0,3) -- (2,3);
\end{tikzpicture}
\begin{tikzpicture}[scale=0.35]
  \draw (9,4) -- (15,4);
  \draw (9,3) -- (15,3);
  \draw (9,2) -- (13,2);
  \draw (9,1) -- (13,1);
  \draw (9,4) -- (9,1);
  \draw (10,4) -- (10,1);
  \draw (11,4) -- (11,1);
  \draw (12,4) -- (12,1);
  \draw (13,4) -- (13,1);
  \draw (14,4) -- (14,3);
  \draw (15,4) -- (15,3);
  \draw (9.5,3.5) node {1};
  \draw (10.5,3.5) node {2};
  \draw (9.5,2.5) node {3};
  \draw (9.5,1.5) node {4};
  \draw (10.5,2.5) node {5};
  \draw (11.5,3.5) node {6};
  \draw (10.5,1.5) node {7};
  \draw (11.5,2.5) node {8};
  \draw (12.5,3.5) node {9};
  \draw (11.5,1.5) node {10};
  \draw (13.5,3.5) node {11};
  \draw (14.5,3.5) node[color=blue] {12};
  \draw (12.5,2.5) node[color=blue] {13};
  \draw (12.5,1.5) node[color=blue] {14};
  \draw (11,-0.5) node{};
  \draw (16,1) node[color=blue] {$\mu=(2)$};
  \draw (16,2) node[color=blue] {added=true};
  \draw (14,5) node{(SYT all even, partition, boolean)};
\end{tikzpicture}

\begin{tikzpicture}[scale=0.35]
 \draw (0,-0.5) node{};
 \draw (0,5) node{};
 \draw [->,decorate,
decoration={snake,amplitude=.4mm,segment length=2mm,post length=1mm, pre length=1mm}] (0,3.5) -- (2,3.5) node[midway, above] {Alg.~\ref{algo3dim}};
 \draw [<-,decorate,
decoration={snake,amplitude=.4mm,segment length=2mm,post length=1mm, pre length=1mm}] (0,2.5) -- (2,2.5)node[midway, below] {Alg.~\ref{Ualgo3dim}};
\end{tikzpicture}
\begin{tikzpicture}[scale=0.35]
\draw[white](0,0)--(0,3);
\draw (0,0)  -- (1,1)--(2,2)--(3,2)--(4,1)--(5,1)--(6,2)--(7,1)--(8,1)--(9,2);
\draw[blue](9,2)--(10,1)--(11,0)--(12,1)--(13,1)--(14,0);
\draw (6,4) node {(vac. tab. shape $\emptyset$, partition, boolean)};
\draw (0,-1.5) node{};
  \draw (13,3) node[color=blue] {$\mu=(2)$};
    \draw (13,2) node[color=blue] {added=true};
\end{tikzpicture}
\hspace{0.5cm}
\begin{tikzpicture}[scale=0.35]
 \draw (0,-0.5) node{};
 \draw (0,5) node{};
 \draw [<->,decorate,
decoration={snake,amplitude=.4mm,segment length=2mm,post length=1mm, pre length=1mm}] (0,3) -- (2,3);
\end{tikzpicture}
\hspace{0.5cm}
\begin{tikzpicture}[scale=0.35]
\draw[white](0,0)--(0,3);
\draw (0,0)  -- (1,1)--(2,2)--(3,2)--(4,1)--(5,1)--(6,2)--(7,1)--(8,1)--(9,2);
\draw (5,4) node {vacillating tableaux};
\draw (0,-1.5) node{};
\end{tikzpicture}
\vspace{-0.5cm}
\caption{The strategy of our bijection outlined using our running example}
\label{FigStrategy}
\end{figure}
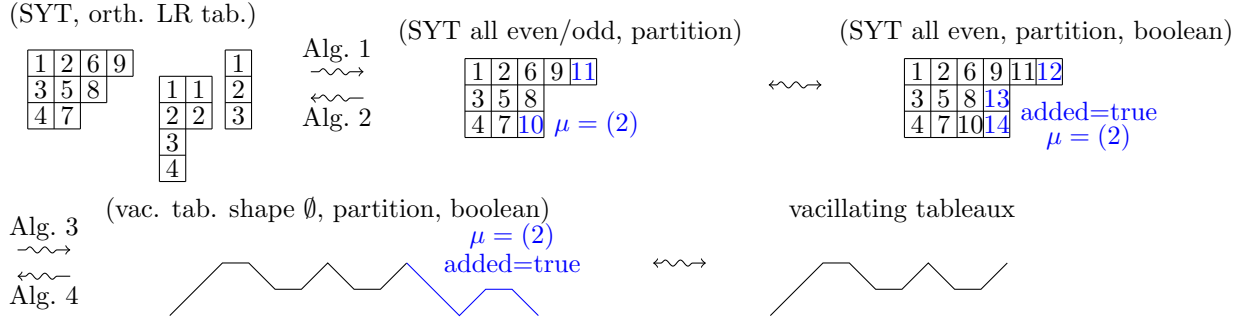

In Figure~\ref{FigStrategy} we illustrate the strategy of our bijection using our running example.

In Table~\ref{TabListExample} in the appendix we provide a list of all pairs $(Q,L)$, their associated $\tilde{Q}$ and their associated $V$ for $r\leq 3$.

As Algorithm~\ref{algoLR3} and~\ref{UalgoLR3} are inverse and Algorithm~\ref{algo3dim} and~\ref{Ualgo3dim} are inverse, the reversed bijection is easily defined.

We start with a vacillating tableau $V$ of shape $\mu=(\mu_1)$ and length $r$, add $\mu_1$ $(-1)$'s to obtain a vacillating tableau $V$ of shape $\emptyset$. If this is of odd length we add $1,0,-1$. Next we apply Algorithm~\ref{Ualgo3dim} to obtain a standard Young tableau $\tilde{Q}$. If we added ${1,0,-1}$ we delete the biggest 3 entries. Those are at the end of each row in increasing order (see Lemma~\ref{Lem10-1}). Then we apply Algorithm~\ref{UalgoLR3} to obtain an orthogonal Littlewood-Richardson tableau in $\LRtabs$ and a standard Young tableau in $\SYT(\lambda)$ where $\lambda$ is a partition such that $\lambda\vdash r$ and $\mu\leq \lambda$.

\subsection{The first Bijection}
\label{Sub1stAlgo}

In this section we consider the first bijection consisting of Algorithm~\ref{algoLR3} and its inverse, Algorithm~\ref{UalgoLR3}. The former maps a pair consisting of an orthogonal Littlewood-Richardson tableau in $\LRtabs$ and a standard Young tableau in $\SYT(\lambda)$ to a bigger standard Young tableau having $\mu_1$ additional cells that form a horizontal strip with at most one cell in the first row, whose row lengths have all the same parity.
We start by giving two examples, the former is of Case 3, the latter, our running example, is of Case 2.

\begin{algorithm}[p]
\label{algoLR3}
\SetKwInOut{Input}{input}\SetKwInOut{Output}{output}
 \Input{orthogonal Littlewood-Richardson tableau $L\in \LRtabs$, standard Young tableau $Q\in \SYT(\lambda)$}
 \Output{standard Young tableau $\tilde{Q}$ with 3 (possibly empty) rows, all of even or all of odd length, contains the original standard Young tableau and $\mu_1$ additional positions that form a horizontal strip with at most one element in the first row, that is filled increasingly from left to right\\ the partition $\mu$, unmodified}
rotate $L$ by $\frac{\pi}{2}$ and shift cells such that it becomes the row-strict tableau $\tilde{L}$, row $i$ being filled with $1,2,\dots,\lambda_i$\;
add cell labeled $x$ to $\tilde{L}$ in row $i+1$ for each position in row $i$ that was not in column $i$ from the right\;
add cells labeled $x$ to $\tilde{L}$ in row $3$ such that the number of cells labeled $x$ is $\mu_1$\;
\If(\tcc*[f]{adjust parity}){not all rows of $\tilde{L}$ have the same parity}
{
 \If{total number of cells is odd}
 {move last cell of the bottommost even row to the other even row\;}
  \If{total number of cells is even}
 {move last cell of the bottommost odd row to the other odd row\;}
}
add numbers $|\lambda|+1, |\lambda|+2, \dots$ increasingly from left to right to $Q$ at the positions of the cells in $\tilde{L}$ labeled $x$ to obtain $\tilde{Q}$\;
\Return ($\tilde{Q}$, $\mu$)\;
 \caption{(orth. LR tab. + SYT) to (SYT + suitable partition)}
\end{algorithm}

\begin{algorithm}[p]
\label{UalgoLR3}
\SetKwInOut{Input}{input}\SetKwInOut{Output}{output}
 \Input{standard Young tableau $\tilde{Q}\in\SYT(\tilde{\lambda})$ with 3 rows, all of even or all of odd length, a partition $\mu=(\mu_1)$, such that the skew-diagram containing the $\mu_1$ largest entries in $\tilde{Q}$ form a horizontal strip with at most one position in the first row, filled from left to right increasingly}
  \Output{orthogonal Littlewood-Richardson tableau $L\in\LRtabs$, standard Young tableau $Q\in\SYT(\lambda)$}
let $L$ be the row-strict tableau of the same shape as  $\tilde{Q}$, with row $i$ filled with $1,2,\dots,\tilde{\lambda_i}$\;
delete the cells in  $\tilde{Q}$ with the $\mu_1$ largest entries to obtain $Q$\;
replace the entries in the corresponding cells in $L$ with $x$'s\;
rotate $L$ by $-\frac{\pi}{2}$\;
\For(\tcc*[f]{mark elements for tail}){each column of $L$}{
\If{there is an odd number of cells labeled $x$ in this column}{mark the bottommost cell in this column that is not marked yet and contains a number, if there exists none, mark the bottommost unmarked cell in the column to the right\;
delete the bottommost cell labeled $x$\;}
\If{there are still cells labeled $x$ in this column}{
beginning at the bottom, mark as many cells, that are not marked yet and contain numbers, in the column to the right, as there are cells labeled $x$ in the current column\;
delete the cells labeled $x$ in the current column\;
}
  }
shift all cells that are marked to the leftmost column\;
separate the rightmost column from the other ones such that we obtain two tableaux\;
shift the leftmost column down such that the tail has length $\mu_1$\;
\If(\tcc*[f]{fix parity}){the second column has odd length and $\mu_1\neq0$}
{
find the smallest entry in the two-column tableau, that can change columns, such that the resulting tableau, after shifting columns to obtain a tail of length $\mu_1$, is an orthogonal Littlewood-Richardson tableau; move this entry to the other column in the two-column tableau and shift the columns such that the tail has length $\mu_1$.
}
\Return{($L$, $Q$)}
 \caption{(SYT + suitable partition) to (orth. LR tab. + SYT) }
\end{algorithm}

\begin{example}
In this example our orthogonal Littlewood-Richardson tableau is of Case 3 with $\mu=(2)$, $\lambda=(5,1,1)$ and $r=7$:\vspace*{-0.3cm}

\begin{tikzpicture}[scale=0.35]
  \draw (0,4) -- (5,4);
  \draw (0,3) -- (5,3);
  \draw (0,2) -- (1,2);
  \draw (0,1) -- (1,1);
  \draw (0,4) -- (0,1);
  \draw (1,4) -- (1,1);
  \draw (2,4) -- (2,3);
  \draw (3,4) -- (3,3);
  \draw (4,4) -- (4,3);
  \draw (5,4) -- (5,3);
  \draw (0.5,3.5) node {1};
  \draw (1.5,3.5) node {4};
  \draw (0.5,2.5) node {2};
  \draw (0.5,1.5) node {3};
  \draw (2.5,3.5) node {5};
  \draw (4.5,3.5) node {7};
  \draw (3.5,3.5) node {6};
  \draw (7,2) -- (8,2);
  \draw (7,1) -- (8,1);
  \draw (6,0) -- (8,0);
  \draw (6,-1) -- (7,-1);
  \draw (6,-2) -- (7,-2);
  \draw (6,0) -- (6,-2);
  \draw (7,2) -- (7,-2);
  \draw (8,2) -- (8,0);
  \draw (7.5,1.5) node {1};
      \draw[->] (7.5,2) -- (7.5,3);
    \draw (7.5,0.5) node[draw, circle, scale=1.2] {};
  \draw (7.5,0.5) node {4};
    \draw[->] (7.5,0) -- (8,-1);
  \draw (6.5,-0.5) node {1};
    \draw[->] (6.5,0) -- (6.5,1);
    \draw (6.5,-1.5) node[draw, circle, scale=1.2] {};
  \draw (6.5,-1.5) node {5};
    \draw[->] (7,-1.5) -- (7.5,-1.5);
  \draw (8.5,4) -- (9.5,4);
  \draw (8.5,3) -- (9.5,3);
  \draw (8.5,2) -- (9.5,2);
  \draw (8.5,1) -- (9.5,1);
  \draw (8.5,4) -- (8.5,1);
  \draw (9.5,4) -- (9.5,1);
  \draw (9,3.5) node {1};
  \draw (9,2.5) node {2};
  \draw (9,1.5) node {3};
  \draw (0,-2) node {};
\end{tikzpicture}
\begin{tikzpicture}[scale=0.35]
 \draw (0,-2) node{};
 \draw (0,5) node{};
 \draw [->,decorate,
decoration={snake,amplitude=.4mm,segment length=2mm,post length=1mm, pre length=1mm}] (0,3) -- (1,3);
\end{tikzpicture}
\begin{tikzpicture}[scale=0.35]
  \draw (0,4) -- (5,4);
  \draw (0,3) -- (5,3);
  \draw (0,2) -- (3,2);
  \draw (0,1) -- (1,1);
  \draw (0,4) -- (0,1);
  \draw (1,4) -- (1,1);
  \draw (2,4) -- (2,2);
  \draw (3,4) -- (3,2);
  \draw (4,4) -- (4,3);
  \draw (5,4) -- (5,3);
  \draw (0.5,3.5) node {1};
  \draw (1.5,3.5) node {4};
  \draw (0.5,2.5) node {2};
  \draw (0.5,1.5) node {3};
  \draw (2.5,3.5) node {5};
  \draw (4.5,3.5) node {7};
  \draw (3.5,3.5) node {6};
      \draw (1.5,2.5) node[draw, circle, scale=1.2] {};
  \draw (1.5,2.5) node {8};
      \draw (2.5,2.5) node[draw, circle, scale=1.2] {};
  \draw (2.5,2.5) node {9};
  \draw (6,4) -- (11,4);
  \draw (6,3) -- (11,3);
  \draw (6,4) -- (6,1);
  \draw (7,4) -- (7,1);
  \draw (8,4) -- (8,3);
  \draw (6.5,3.5) node {1};
  \draw (6.5,2.5) node {1};
  \draw (6,2) -- (9,2);
  \draw (6,1) -- (7,1);
  \draw (8,4) -- (8,2);
  \draw (9,4) -- (9,2);
  \draw (10,4) -- (10,3);
  \draw (11,4) -- (11,3);
  \draw (6.5,1.5) node {1};
  \draw (7.5,3.5) node {2};
  \draw (8.5,3.5) node {3};
      \draw (7.5,2.5) node[draw, circle, scale=1.2] {};
  \draw (7.5,2.5) node {x};
      \draw (8.5,2.5) node[draw, circle, scale=1.2] {};
  \draw (8.5,2.5) node {x};
  \draw (9.5,3.5) node {4};
  \draw (10.5,3.5) node {5};
        \draw (9.5,3.5) node[draw, circle, scale=1.2] {};
              \draw (10.5,3.5) node[draw, circle, scale=1.2] {};
  \draw (0,-2) node {};
\end{tikzpicture}
\begin{tikzpicture}[scale=0.35]
 \draw (0,-2) node{};
 \draw (0,5) node{};
 \draw [->,decorate,
decoration={snake,amplitude=.4mm,segment length=2mm,post length=1mm, pre length=1mm}] (0,3) -- (1,3);
\end{tikzpicture}
\begin{tikzpicture}[scale=0.35]
  \draw (0,4) -- (5,4);
  \draw (0,3) -- (5,3);
  \draw (0,2) -- (3,2);
  \draw (0,1) -- (1,1);
  \draw (0,4) -- (0,1);
  \draw (1,4) -- (1,1);
  \draw (2,4) -- (2,2);
  \draw (3,4) -- (3,2);
  \draw (4,4) -- (4,3);
  \draw (5,4) -- (5,3);
  \draw (0.5,3.5) node {1};
  \draw (1.5,3.5) node {4};
  \draw (0.5,2.5) node {2};
  \draw (0.5,1.5) node {3};
  \draw (2.5,3.5) node {5};
  \draw (4.5,3.5) node {7};
  \draw (3.5,3.5) node {6};
  \draw (1.5,2.5) node {8};
  \draw (2.5,2.5) node {9};
  \draw (0,-2) node {};
\end{tikzpicture}
\end{example}

\begin{example}
\label{ExRunStart}
In our running example we start with a standard Young tableau and an orthogonal Littlewood-Richardson tableau of Case 2 with  $\mu=(2)$, $\lambda=(4,3,2)$ and $r=9$:\vspace*{-0.3cm}\\
\begin{tikzpicture}[scale=0.35]
  \draw (0,4) -- (4,4);
  \draw (0,3) -- (4,3);
  \draw (0,2) -- (3,2);
  \draw (0,1) -- (2,1);
  \draw (0,4) -- (0,1);
  \draw (1,4) -- (1,1);
  \draw (2,4) -- (2,1);
  \draw (3,4) -- (3,2);
  \draw (4,4) -- (4,3);
  \draw (0.5,3.5) node {1};
  \draw (1.5,3.5) node {2};
  \draw (0.5,2.5) node {3};
  \draw (0.5,1.5) node {4};
  \draw (1.5,2.5) node {5};
  \draw (2.5,3.5) node {6};
  \draw (1.5,1.5) node {7};
  \draw (2.5,2.5) node {8};
  \draw (3.5,3.5) node {9};
  \draw (5,3) -- (7,3);
  \draw (5,2) -- (7,2);
  \draw (5,1) -- (7,1);
  \draw (5,0) -- (6,0);
  \draw (5,-1) -- (6,-1);
  \draw (5,3) -- (5,-1);
  \draw (6,3) -- (6,-1);
  \draw (7,3) -- (7,1);
  \draw (5.5,2.5) node {1};
        \draw[->] (5.5,3) -- (5.5,4);
  \draw (6.5,2.5) node {1};
        \draw[->] (6.5,3) -- (6.5,4);
  \draw (5.5,1.5) node {2};
  \draw (6.5,1.5) node {2};
  \draw (5.5,0.5) node[draw, circle, scale=1.2] {};
  \draw (5.5,0.5) node {3};
  \draw[->] (6,0.5) -- (6.5,0.5);
  \draw (5.5,-0.5) node[draw, circle, scale=1.2] {};
  \draw (5.5,-0.5) node {4};
  \draw[->] (6,-0.5) -- (8,-0.5);
  \draw (7.5,4) -- (8.5,4);
  \draw (7.5,3) -- (8.5,3);
  \draw (7.5,2) -- (8.5,2);
  \draw (7.5,1) -- (8.5,1);
  \draw (7.5,4) -- (7.5,1);
  \draw (8.5,4) -- (8.5,1);
  \draw (8,3.5) node {1};
  \draw (8,2.5) node {2};
  \draw (8,1.5) node {3};
  \draw (0,-1) node {};
\end{tikzpicture}
\begin{tikzpicture}[scale=0.35]
 \draw (0,-1) node{};
 \draw (0,5) node{};
 \draw [->,decorate,
decoration={snake,amplitude=.4mm,segment length=2mm,post length=1mm, pre length=1mm}] (0,3) -- (1,3);
\end{tikzpicture}
\begin{tikzpicture}[scale=0.35]
  \draw (0,4) -- (4,4);
  \draw (0,3) -- (4,3);
  \draw (0,2) -- (3,2);
  \draw (0,1) -- (2,1);
  \draw (0,4) -- (0,1);
  \draw (1,4) -- (1,1);
  \draw (2,4) -- (2,1);
  \draw (3,4) -- (3,2);
  \draw (4,4) -- (4,3);
  \draw (0.5,3.5) node {1};
  \draw (1.5,3.5) node {2};
  \draw (0.5,2.5) node {3};
  \draw (0.5,1.5) node {4};
  \draw (1.5,2.5) node {5};
  \draw (2.5,3.5) node {6};
  \draw (1.5,1.5) node {7};
  \draw (2.5,2.5) node {8};
  \draw (3.5,3.5) node {9};
  \draw (5,4) -- (9,4);
  \draw (5,3) -- (9,3);
  \draw (5,2) -- (9,2);
  \draw (5,1) -- (8,1);
  \draw (5,4) -- (5,1);
  \draw (6,4) -- (6,1);
  \draw (7,4) -- (7,1);
  \draw (8,4) -- (8,1);
  \draw (9,4) -- (9,2);
  \draw (5.5,3.5) node {1};
  \draw (6.5,3.5) node {2};
  \draw (7.5,3.5) node {3};
  \draw (8.5,3.5) node {4};
      \draw (8.5,3.5) node[draw, circle, scale=1.2] {};
  \draw (8.5,2.5) node {$x$};
    \draw[->] (9,2.5) -- (9.5,3.5);
  \draw (5.5,2.5) node {1};
  \draw (6.5,2.5) node {2};
  \draw (7.5,2.5) node {3};
      \draw (7.5,2.5) node[draw, circle, scale=1.2] {};
  \draw (5.5,1.5) node {1};
  \draw (6.5,1.5) node {2};
    \draw (8.5,2.5) node[draw, circle, scale=1.2] {};
  \draw (7.5,1.5) node {$x$};
  \draw (0,-1) node {};
\end{tikzpicture}
\begin{tikzpicture}[scale=0.35]
 \draw (0,-1) node{};
 \draw (0,5) node{};
 \draw [->,decorate,
decoration={snake,amplitude=.4mm,segment length=2mm,post length=1mm, pre length=1mm}] (0,3) -- (1,3);
\end{tikzpicture}
\begin{tikzpicture}[scale=0.35]
  \draw (0,4) -- (5,4);
  \draw (0,3) -- (5,3);
  \draw (0,2) -- (3,2);
  \draw (0,1) -- (3,1);
  \draw (0,4) -- (0,1);
  \draw (1,4) -- (1,1);
  \draw (2,4) -- (2,1);
  \draw (3,4) -- (3,1);
  \draw (4,4) -- (4,3);
  \draw (5,4) -- (5,3);
  \draw (0.5,3.5) node {1};
  \draw (1.5,3.5) node {2};
  \draw (0.5,2.5) node {3};
  \draw (0.5,1.5) node {4};
  \draw (1.5,2.5) node {5};
  \draw (2.5,3.5) node {6};
  \draw (1.5,1.5) node {7};
  \draw (2.5,2.5) node {8};
  \draw (3.5,3.5) node {9};
    \draw (2.5,1.5) node[draw, circle, scale=1.2] {};
  \draw (2.5,1.5) node {10};
    \draw (4.5,3.5) node[draw, circle, scale=1.2] {};
  \draw (4.5,3.5) node {11};
  \draw (6,4) -- (11,4);
  \draw (6,3) -- (11,3);
  \draw (6,2) -- (9,2);
  \draw (6,1) -- (9,1);
  \draw (6,4) -- (6,1);
  \draw (7,4) -- (7,1);
  \draw (8,4) -- (8,1);
  \draw (9,4) -- (9,1);
  \draw (10,4) -- (10,3);
  \draw (11,4) -- (11,3);
  \draw (6.5,3.5) node {1};
  \draw (7.5,3.5) node {2};
  \draw (8.5,3.5) node {3};
  \draw (9.5,3.5) node {4};
    \draw (10.5,3.5) node[draw, circle, scale=1.2] {};
  \draw (10.5,3.5) node {$x$};
  \draw (6.5,2.5) node {1};
  \draw (7.5,2.5) node {2};
  \draw (8.5,2.5) node {3};
  \draw (6.5,1.5) node {1};
  \draw (7.5,1.5) node {2};
  \draw (8.5,1.5) node[draw, circle, scale=1.2] {};
  \draw (8.5,1.5) node {$x$};
  \draw (0,-1) node {};
\end{tikzpicture}
\begin{tikzpicture}[scale=0.35]
 \draw (0,-1) node{};
 \draw (0,5) node{};
 \draw [->,decorate,
decoration={snake,amplitude=.4mm,segment length=2mm,post length=1mm, pre length=1mm}] (0,3) -- (1,3);
\end{tikzpicture}
\begin{tikzpicture}[scale=0.35]
  \draw (0,4) -- (5,4);
  \draw (0,3) -- (5,3);
  \draw (0,2) -- (3,2);
  \draw (0,1) -- (3,1);
  \draw (0,4) -- (0,1);
  \draw (1,4) -- (1,1);
  \draw (2,4) -- (2,1);
  \draw (3,4) -- (3,1);
  \draw (4,4) -- (4,3);
  \draw (5,4) -- (5,3);
  \draw (0.5,3.5) node {1};
  \draw (1.5,3.5) node {2};
  \draw (0.5,2.5) node {3};
  \draw (0.5,1.5) node {4};
  \draw (1.5,2.5) node {5};
  \draw (2.5,3.5) node {6};
  \draw (1.5,1.5) node {7};
  \draw (2.5,2.5) node {8};
  \draw (3.5,3.5) node {9};
  \draw (2.5,1.5) node {10};
  \draw (4.5,3.5) node {11};
  \draw (0,-1) node {};
\end{tikzpicture}
\end{example}

We see that in both cases the added cells form a horizontal strip with at most one position in the first row that is filled from left to right increasingly.
Moreover, we point out that same standard Young tableau $Q$ and partition $\mu$ still can lead to different standard Young tableau $\tilde{Q}$. Thus there is essential information in the orthogonal Littlewood-Richardson tableau:
\begin{example}
$\lambda=(4,2)$, $\mu=(2)$\vspace*{-0.3cm}

\begin{tikzpicture}[scale=0.35]
  \draw (0,4) -- (4,4);
  \draw (0,3) -- (4,3);
  \draw (0,2) -- (2,2);
  \draw (0,4) -- (0,2);
  \draw (1,4) -- (1,2);
  \draw (2,4) -- (2,2);
  \draw (3,4) -- (3,3);
  \draw (4,4) -- (4,3);
  \draw (0.5,3.5) node {1};
  \draw (1.5,3.5) node {2};
  \draw (0.5,2.5) node {5};
  \draw (3.5,3.5) node {4};
  \draw (1.5,2.5) node {6};
  \draw (2.5,3.5) node {3};
  \draw (5,0) -- (6,0);
  \draw (5,-1) -- (6,-1);
  \draw (5,-2) -- (6,-2);
  \draw (5,0) -- (5,-2);
  \draw (6,0) -- (6,-2);
  \draw (5.5,-0.5) node {1};
  \draw (5.5,-1.5) node {2};
  \draw (7,4) -- (8,4);
  \draw (7,3) -- (8,3);
  \draw (7,2) -- (8,2);
  \draw (7,1) -- (8,1);
  \draw (7,0) -- (8,0);
  \draw (7,4) -- (7,0);
  \draw (8,4) -- (8,0);
  \draw (7.5,3.5) node {1};
  \draw (7.5,2.5) node {2};
  \draw (7.5,1.5) node {3};
  \draw (7.5,0.5) node {4};
  \draw (0,-2) node{};
\end{tikzpicture}
\begin{tikzpicture}[scale=0.35]
  \draw (0,-2) node{};
 \draw (0,0) node{};
 \draw (0,5) node{};
 \draw [->,decorate,
decoration={snake,amplitude=.4mm,segment length=2mm,post length=1mm, pre length=1mm}] (0,3) -- (1.1,3);
\end{tikzpicture}
\begin{tikzpicture}[scale=0.35]
  \draw (0,-2) node{};
\draw (0,0.2) node{};
  \draw (0,-2) node{};
  \draw (0,4) -- (4,4);
  \draw (0,3) -- (4,3);
  \draw (0,2) -- (2,2);
  \draw (0,1) -- (2,1);
  \draw (0,4) -- (0,1);
  \draw (1,4) -- (1,1);
  \draw (2,4) -- (2,1);
  \draw (3,4) -- (3,3);
  \draw (4,4) -- (4,3);
  \draw (0.5,3.5) node {1};
  \draw (1.5,3.5) node {2};
  \draw (0.5,2.5) node {5};
  \draw (3.5,3.5) node {4};
  \draw (1.5,2.5) node {6};
  \draw (2.5,3.5) node {3};
  \draw (0.5,1.5) node {7};
  \draw (1.5,1.5) node {8};
\end{tikzpicture}
\raisebox{1.3cm}{whereas}
\begin{tikzpicture}[scale=0.35]
  \draw (0,-2) node{};
  \draw (0,4) -- (4,4);
  \draw (0,3) -- (4,3);
  \draw (0,2) -- (2,2);
  \draw (0,4) -- (0,2);
  \draw (1,4) -- (1,2);
  \draw (2,4) -- (2,2);
  \draw (3,4) -- (3,3);
  \draw (4,4) -- (4,3);
  \draw (0.5,3.5) node {1};
  \draw (1.5,3.5) node {2};
  \draw (0.5,2.5) node {5};
  \draw (3.5,3.5) node {4};
  \draw (1.5,2.5) node {6};
  \draw (2.5,3.5) node {3};
  \draw (6,4) -- (7,4);
  \draw (6,3) -- (7,3);
  \draw (6,2) -- (7,2);
  \draw (6,4) -- (6,2);
  \draw (7,4) -- (7,2);
  \draw (6.5,3.5) node {1};
  \draw (6.5,2.5) node {2};
  \draw (5,2) -- (6,2);
  \draw (5,1) -- (6,1);
  \draw (5,0) -- (6,0);
  \draw (5,2) -- (5,0);
  \draw (6,2) -- (6,0);
  \draw (5.5,1.5) node {3};
  \draw (5.5,0.5) node {4};
  \draw (7.5,4) -- (8.5,4);
  \draw (7.5,3) -- (8.5,3);
  \draw (7.5,2) -- (8.5,2);
  \draw (7.5,4) -- (7.5,2);
  \draw (8.5,4) -- (8.5,2);
  \draw (8,3.5) node {1};
  \draw (8,2.5) node {2};
\end{tikzpicture}
\begin{tikzpicture}[scale=0.35]
  \draw (0,-2) node{};
 \draw (0,0) node{};
 \draw (0,5) node{};
 \draw [->,decorate,
decoration={snake,amplitude=.4mm,segment length=2mm,post length=1mm, pre length=1mm}] (0,3) -- (1.1,3);
\end{tikzpicture}
\begin{tikzpicture}[scale=0.35]
  \draw (0,-2) node{};
\draw (0,0.2) node{};
  \draw (0,4) -- (4,4);
  \draw (0,3) -- (4,3);
  \draw (0,2) -- (4,2);
  \draw (0,4) -- (0,2);
  \draw (1,4) -- (1,2);
  \draw (2,4) -- (2,2);
  \draw (3,4) -- (3,2);
  \draw (4,4) -- (4,2);
  \draw (0.5,3.5) node {1};
  \draw (1.5,3.5) node {2};
  \draw (0.5,2.5) node {5};
  \draw (3.5,3.5) node {4};
  \draw (1.5,2.5) node {6};
  \draw (2.5,3.5) node {3};
  \draw (2.5,2.5) node {7};
  \draw (3.5,2.5) node {8};
\end{tikzpicture}.
\end{example}

We will see that every step in Algorithm~\ref{algoLR3} is reversible, and Algorithm~\ref{UalgoLR3} is its inverse (see Lemma~\ref{LemfirstInverse}). To illustrate the latter we use our running example and give a full iteration Algorithm~\ref{UalgoLR3}.
\begin{example}
In our running example we have $\mu=(2)$ and the following standard Young tableau $\tilde{Q}$:

\begin{tikzpicture}[scale=0.35]
  \draw (0,4) -- (5,4);
  \draw (0,3) -- (5,3);
  \draw (0,2) -- (3,2);
  \draw (0,1) -- (3,1);
  \draw (0,4) -- (0,1);
  \draw (1,4) -- (1,1);
  \draw (2,4) -- (2,1);
  \draw (3,4) -- (3,1);
  \draw (4,4) -- (4,3);
  \draw (5,4) -- (5,3);
  \draw (0.5,3.5) node {1};
  \draw (1.5,3.5) node {2};
  \draw (0.5,2.5) node {3};
  \draw (0.5,1.5) node {4};
  \draw (1.5,2.5) node {5};
  \draw (2.5,3.5) node {6};
  \draw (1.5,1.5) node {7};
  \draw (2.5,2.5) node {8};
  \draw (3.5,3.5) node {9};
  \draw (2.5,1.5) node {10};
  \draw (4.5,3.5) node {11};
  \draw (0,-1) node {};
\end{tikzpicture}
\begin{tikzpicture}[scale=0.35]
 \draw (0,-0.5) node{};
 \draw (0,5) node{};
 \draw [->,decorate,
decoration={snake,amplitude=.4mm,segment length=2mm,post length=1mm, pre length=1mm}] (0,3) -- (1,3);
\end{tikzpicture}
\begin{tikzpicture}[scale=0.35]
  \draw (0,4) -- (5,4);
  \draw (0,3) -- (5,3);
  \draw (0,2) -- (3,2);
  \draw (0,1) -- (3,1);
  \draw (0,4) -- (0,1);
  \draw (1,4) -- (1,1);
  \draw (2,4) -- (2,1);
  \draw (3,4) -- (3,1);
  \draw (4,4) -- (4,3);
  \draw (5,4) -- (5,3);
  \draw (0.5,3.5) node {1};
  \draw (1.5,3.5) node {2};
  \draw (0.5,2.5) node {3};
  \draw (0.5,1.5) node {4};
  \draw (1.5,2.5) node {5};
  \draw (2.5,3.5) node {6};
  \draw (1.5,1.5) node {7};
  \draw (2.5,2.5) node {8};
  \draw (3.5,3.5) node {9};
    \draw (2.5,1.5) node[draw, circle, scale=1.2] {};
  \draw (2.5,1.5) node {10};
    \draw (4.5,3.5) node[draw, circle, scale=1.2] {};
  \draw (4.5,3.5) node {11};
  \draw (6,4) -- (11,4);
  \draw (6,3) -- (11,3);
  \draw (6,2) -- (9,2);
  \draw (6,1) -- (9,1);
  \draw (6,4) -- (6,1);
  \draw (7,4) -- (7,1);
  \draw (8,4) -- (8,1);
  \draw (9,4) -- (9,1);
  \draw (10,4) -- (10,3);
  \draw (11,4) -- (11,3);
  \draw (6.5,3.5) node {1};
  \draw (7.5,3.5) node {2};
  \draw (8.5,3.5) node {3};
  \draw (9.5,3.5) node {4};
    \draw (10.5,3.5) node[draw, circle, scale=1.2] {};
  \draw (10.5,3.5) node {$x$};
  \draw (6.5,2.5) node {1};
  \draw (7.5,2.5) node {2};
  \draw (8.5,2.5) node {3};
  \draw (6.5,1.5) node {1};
  \draw (7.5,1.5) node {2};
  \draw (8.5,1.5) node[draw, circle, scale=1.2] {};
  \draw (8.5,1.5) node {$x$};
  \draw (0,-1) node {};
\end{tikzpicture}
\begin{tikzpicture}[scale=0.35]
 \draw (0,-0.5) node{};
 \draw (0,5) node{};
 \draw [->,decorate,
decoration={snake,amplitude=.4mm,segment length=2mm,post length=1mm, pre length=1mm}] (0,3) -- (1,3);
\end{tikzpicture}
\begin{tikzpicture}[scale=0.35]
  \draw (0,4) -- (4,4);
  \draw (0,3) -- (4,3);
  \draw (0,2) -- (3,2);
  \draw (0,1) -- (2,1);
  \draw (0,4) -- (0,1);
  \draw (1,4) -- (1,1);
  \draw (2,4) -- (2,1);
  \draw (3,4) -- (3,2);
  \draw (4,4) -- (4,3);
  \draw (0.5,3.5) node {1};
  \draw (1.5,3.5) node {2};
  \draw (0.5,2.5) node {3};
  \draw (0.5,1.5) node {4};
  \draw (1.5,2.5) node {5};
  \draw (2.5,3.5) node {6};
  \draw (1.5,1.5) node {7};
  \draw (2.5,2.5) node {8};
  \draw (3.5,3.5) node {9};
  \draw (5,4) -- (8,4);
  \draw (5,3) -- (8,3);
  \draw (5,2) -- (8,2);
  \draw (5,1) -- (8,1);
  \draw (7,0) -- (8,0);
  \draw (7,-1) -- (8,-1);
  \draw (5,4) -- (5,1);
  \draw (6,4) -- (6,1);
  \draw (7,4) -- (7,-1);
  \draw (8,4) -- (8,-1);
  \draw (5.5,3.5) node {1};
  \draw (6.5,3.5) node {1};
  \draw (7.5,3.5) node {1};
  \draw (5.5,2.5) circle (13pt) node {2};
  \draw (6.5,2.5) node {2};
  \draw (7.5,2.5) node {2};
  \draw (5.5,1.5) node {$x$};
  \draw (6.5,1.5) node {3};
  \draw (7.5,1.5) node {3};
  \draw (7.5,-0.5) node {$x$};
  \draw (7.5,0.5) circle (13pt) node {4};
  \draw (0,-1) node {};
\end{tikzpicture}
\begin{tikzpicture}[scale=0.35]
 \draw (0,-0.5) node{};
 \draw (0,5) node{};
 \draw [->,decorate,
decoration={snake,amplitude=.4mm,segment length=2mm,post length=1mm, pre length=1mm}] (0,3) -- (1,3);
\end{tikzpicture}
\begin{tikzpicture}[scale=0.35]
  \draw (0,4) -- (4,4);
  \draw (0,3) -- (4,3);
  \draw (0,2) -- (3,2);
  \draw (0,1) -- (2,1);
  \draw (0,4) -- (0,1);
  \draw (1,4) -- (1,1);
  \draw (2,4) -- (2,1);
  \draw (3,4) -- (3,2);
  \draw (4,4) -- (4,3);
  \draw (0.5,3.5) node {1};
  \draw (1.5,3.5) node {2};
  \draw (0.5,2.5) node {3};
  \draw (0.5,1.5) node {4};
  \draw (1.5,2.5) node {5};
  \draw (2.5,3.5) node {6};
  \draw (1.5,1.5) node {7};
  \draw (2.5,2.5) node {8};
  \draw (3.5,3.5) node {9};
  \draw (6,4) -- (7,4);
  \draw (6,3) -- (7,3);
  \draw (5,2) -- (7,2);
  \draw (5,1) -- (7,1);
  \draw (5,0) -- (6,0);
  \draw (5,-1) -- (6,-1);
  \draw (5,2) -- (5,-1);
  \draw (6,4) -- (6,-1);
  \draw (7,4) -- (7,1);
  \draw (5.5,1.5) node {1};
  \draw (6.5,3.5) node {1};
  \draw (5.5,0.5) node {2};
  \draw (6.5,2.5) node {2};
  \draw (6.5,1.5) circle (13pt) node {3};
  \draw (5.5,-0.5) node {4};
  \draw (7.5,4) -- (8.5,4);
  \draw (7.5,3) -- (8.5,3);
  \draw (7.5,2) -- (8.5,2);
  \draw (7.5,1) -- (8.5,1);
  \draw (7.5,4) -- (7.5,1);
  \draw (8.5,4) -- (8.5,1);
  \draw (8,3.5) node {1};
  \draw (8,2.5) node {2};
  \draw (8,1.5) node {3};
  \draw (0,-1) node {};
\end{tikzpicture}

Now we have the case that the two left columns are odd. Thus one element has to change column. In this case it is the $3$ in the second column. What we obtain are our original tableaux:
\[
\begin{tikzpicture}[scale=0.35]
  \draw (0,4) -- (4,4);
  \draw (0,3) -- (4,3);
  \draw (0,2) -- (3,2);
  \draw (0,1) -- (2,1);
  \draw (0,4) -- (0,1);
  \draw (1,4) -- (1,1);
  \draw (2,4) -- (2,1);
  \draw (3,4) -- (3,2);
  \draw (4,4) -- (4,3);
  \draw (0.5,3.5) node {1};
  \draw (1.5,3.5) node {2};
  \draw (0.5,2.5) node {3};
  \draw (0.5,1.5) node {4};
  \draw (1.5,2.5) node {5};
  \draw (2.5,3.5) node {6};
  \draw (1.5,1.5) node {7};
  \draw (2.5,2.5) node {8};
  \draw (3.5,3.5) node {9};
  \draw (5,3) -- (7,3);
  \draw (5,2) -- (7,2);
  \draw (5,1) -- (7,1);
  \draw (5,0) -- (6,0);
  \draw (5,-1) -- (6,-1);
  \draw (5,3) -- (5,-1);
  \draw (6,3) -- (6,-1);
  \draw (7,3) -- (7,1);
  \draw (5.5,2.5) node {1};
  \draw (6.5,2.5) node {1};
  \draw (5.5,1.5) node {2};
  \draw (6.5,1.5) node {2};
  \draw (5.5,0.5) node {3};
  \draw (5.5,-0.5) node {4};
  \draw (7.5,4) -- (8.5,4);
  \draw (7.5,3) -- (8.5,3);
  \draw (7.5,2) -- (8.5,2);
  \draw (7.5,1) -- (8.5,1);
  \draw (7.5,4) -- (7.5,1);
  \draw (8.5,4) -- (8.5,1);
  \draw (8,3.5) node {1};
  \draw (8,2.5) node {2};
  \draw (8,1.5) node {3};
  \draw (0,-1) node {};
\end{tikzpicture}\]
\end{example}

We proceed by formulating and proving properties of Algorithm~\ref{algoLR3} and~\ref{UalgoLR3}.

Since the filling is not mentioned in the algorithm, the following proposition is clear.

\begin{proposition}
Algorithm~\ref{algoLR3} does not depend on the filling of the standard Young tableau. Only the shape matters.
\end{proposition}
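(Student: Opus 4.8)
The plan is to read the claim off directly from the pseudocode of Algorithm~\ref{algoLR3}, keeping careful track of which steps actually inspect the entries of the input standard Young tableau $Q$.

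First I would observe that Steps~1--4 never refer to $Q$: Step~1 rotates $L$ by $\pi/2$ and left-justifies it into the row-strict tableau $\tilde L$ (whose row $i$ has length $\lambda_i$, consistently with Theorem~\ref{LemmaLR}); Step~2 appends cells labelled $x$ in row $i+1$ according to which cells of row $i$ of $\tilde L$ fail to lie in the appropriate column from the right; Step~3 appends further $x$-cells in row~$3$ until there are $\mu_1$ of them in total; and Step~4 possibly relocates a single bottom-most $x$-cell between two rows so that all three row lengths acquire the same parity. Each of these operations is a function of $L$ alone (the integer $\mu_1$ being the length of the tail of $L$, hence again determined by $L$). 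Their combined effect is to produce a shape $\tilde\lambda\supseteq\lambda$ with $|\tilde\lambda|=|\lambda|+\mu_1$, together with a distinguished set $S$ of $\mu_1$ cells --- the $x$-cells --- which forms a horizontal strip with at most one cell in the first row. Both $\tilde\lambda$ and $S$ depend only on $L$ and on the shape $\lambda$ of $Q$.

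Next I would observe that $Q$ enters the procedure only in Step~5, where its entries $1,\dots,|\lambda|$ are placed into the cells of $\tilde L$ outside $S$ (which are precisely the cells of $\lambda$, in the positions they occupy in $Q$), and the cells of $S$ are filled with $|\lambda|+1,\dots,|\lambda|+\mu_1$ increasingly from left to right. Since $S$ was already pinned down by Steps~1--4, the conclusion follows at once: for two standard Young tableaux $Q$ and $Q'$ of the same shape $\lambda$, Algorithm~\ref{algoLR3} applied to $(L,Q)$ and to $(L,Q')$ returns tableaux $\tilde Q$ and $\tilde Q'$ of the same shape $\tilde\lambda$, which agree on the skew shape $\tilde\lambda/\lambda$ (both carrying $|\lambda|+1,\dots,|\lambda|+\mu_1$ in the cells of $S$, in the same order) and restrict to $Q$, respectively $Q'$, on $\lambda$. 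Thus the skew shape appended by the algorithm and the way it is filled are determined by $L$ and by the shape of $Q$, not by the entries of $Q$ --- which is exactly the assertion.

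I do not anticipate any genuine obstacle: the argument is pure bookkeeping, and the only point that requires a moment's attention is the verification that each of Steps~1--4 is really a function of $L$ alone. (That Step~5 then outputs a legitimate standard Young tableau --- which needs $S$ to be a horizontal strip with at most one cell in the first row --- is again a statement about Steps~1--4, and is part of the output specification established in Lemma~\ref{LemfirstInverse}.)
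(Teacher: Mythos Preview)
Your argument is correct and follows exactly the paper's approach: the paper simply remarks that the proposition is clear because the filling of $Q$ is never mentioned in the algorithm, and you have spelled out this observation step by step. One small slip: the output specification you invoke at the end (that the added cells form a horizontal strip) is established in Lemma~\ref{LemHorizonal1}, not in Lemma~\ref{LemfirstInverse}.
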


\begin{lemma}
\label{LemHorizonal1}
Algorithm~\ref{algoLR3} is well-defined and returns a standard Young tableau, all of whose row lengths have the same parity, and a partition $\mu$ with a single part. The added cells in the standard Young tableau in Algorithm~\ref{algoLR3} form a horizontal strip with at most one position in the first row. They are filled from left to right increasingly.
\end{lemma}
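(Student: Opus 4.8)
The plan is to trace Algorithm~\ref{algoLR3} line by line, carrying out the verification separately in each of the four cases of Theorem~\ref{LemmaLR}; that theorem, together with Lemma~\ref{lemToProofTheorem} and the corollary following it, supplies all the structural information about the input $L$ that is needed.

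First I would determine the shape of $\tilde L$ produced by lines~1--3. The rotation in line~1 turns the at most three columns of $L$ into rows, and since $L$ contains the entry $i$ exactly $\lambda_i'$ times, the $i$-th ``line'' of $L$ (the one filled from right to left with $1,\dots,\lambda_i$) becomes row~$i$ of $\tilde L$; thus $\tilde L$ starts out with shape $\lambda$, row $i$ filled by $1,2,\dots,\lambda_i$. By the picture in Theorem~\ref{LemmaLR}, the $i$-th line lies in the $i$-th column of $L$ from the right except for the cells it occupies inside the tail, so line~2 (which appends an $x$ in row $i+1$ for each cell of row $i$ not coming from that column) appends cells only at the right ends of rows~$2$ and~$3$, the number in each being governed by how the tail of length $\mu_1$ splits among the first two lines; line~3 then tops up row~$3$ at its right end to $\mu_1$ cells labelled $x$ in total. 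Hence after line~3 the shape of $\tilde L$ is $\lambda$ together with $\mu_1$ cells sitting at the right ends of rows~$2$ and~$3$ only. Two facts must be extracted here: that these cells form a horizontal strip (no two in one column) and that each of them in a row $>1$ has the cell directly above it already present in $\lambda$; both reduce, in each case, to the size constraints recorded in Theorem~\ref{LemmaLR} (for instance $b\le c$ in Case~1; $b<c$ and $\lambda_1<c+\mu_1$ in Case~2; $\lambda_2=b-1\le c$ in Case~3) together with the residuum bound and the description of gaps in items~\ref{lemPoint3Middle1}--\ref{lemPoint6Uneq2} of Lemma~\ref{lemToProofTheorem}.

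Next I would treat the parity adjustment of line~4. Given three row lengths that are not all of one parity, the minority parity occurs exactly once, so there are exactly two rows of the majority parity; writing their indices as $i<j$, line~4 moves the rightmost cell of row $j$ up to row $i$, and since $j\in\{2,3\}$ this cell is one of the appended $x$-cells (those occupy exactly the right ends of rows~$2$ and~$3$ before line~4). Consequently the set of added cells is merely relocated: since no added cell lay in row~$1$ before line~4 and line~4 can move at most one there, the strip of added cells ends with at most one cell in the first row; and after line~4 all three row lengths have the common (possibly flipped) parity, which is the parity assertion. It then remains to check that the move keeps the outer boundary weakly decreasing — the needed strict inequality between the two affected rows holds because each has a parity opposite to, and is weakly comparable with, the third row — and that the ``cell directly above is present'' property survives the relocation, which again follows from the inequalities already established.

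Finally, line~5 writes $|\lambda|+1,|\lambda|+2,\dots$ into the $x$-positions from left to right. Since these values exceed every entry of $Q\in\SYT(\lambda)$, the output $\tilde Q$ is standard as soon as each $x$-cell, taken in left-to-right order, is addable to the union of $\lambda$ with the $x$-cells filled before it, which is precisely the horizontal-strip-over-$\lambda$ property established above; ``filled from left to right increasingly'' is then immediate. I expect the main obstacle to be the bookkeeping of the second paragraph: determining, uniformly across the four cases, exactly which cells line~2 converts into $x$'s and into which columns they fall, and in particular handling the residuum-$1$ situation of Case~3, where a gap appears at the top of the middle column (cf.\ items~\ref{lemPoint3Middle1} and~\ref{lemPoint4Middle2} of Lemma~\ref{lemToProofTheorem}), as well as the degenerate shapes (empty third row, $\mu=\emptyset$, or $\ell(\lambda)=1$) where one must still verify that line~4's prescription is well defined.
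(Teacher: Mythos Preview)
Your proposal is correct and follows essentially the same approach as the paper: trace the algorithm through the four cases of Theorem~\ref{LemmaLR}, verify that the $x$-cells form a horizontal strip in rows~$2$ and~$3$, and check that the parity move relocates at most one such cell upward. The paper organizes the argument slightly differently—emphasizing that the shifted tail cells have distinct entries (hence form a strip) and that row~$3$ is non-empty when the total is odd—whereas you phrase things in terms of the shape after each line and the majority/minority parity split; but the substance is the same, and your caveat about the bookkeeping (in particular verifying that the source row~$j$ actually contains an $x$-cell, which ultimately rests on constraints like $\lambda_1<c+\mu_1$ in Case~2) correctly identifies where the real work lies.
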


\begin{proof}
Considering the classification in Theorem~\ref{LemmaLR}, we note that either $\mu_1$ cells or $\mu_1-1$ cells labeled $x$ are added for those cells being shifted from one row to another. An additional cell labeled $x$ is added to the third row if and only if $L$ is of Case 1 or 2 and the residuum is 1. Thus Algorithm~\ref{algoLR3} is well-defined.

We now show that, when \enquote{adjusting the parity}, the third row of $\tilde{L}$ is non-empty if the total number of cells in $\tilde{L}$ is odd. In Case 1 $\tilde{L}$ has an even number of cells ($|\tilde{Q}|=\mu_1+|\lambda|=\mu_1+\mu_1+2b-a+c$). In Case 2 and Case 3 the topmost entry of the tail cannot end up in the first row of $\tilde{L}$ (compare again with the classification in Theorem~\ref{LemmaLR}). So it either ends up in the third row or in the second row creating a cell labeled $x$ in the third row.

By construction all rows of $\tilde{Q}$ have the same parity. It remains to show that $\tilde{Q}$ is indeed a standard tableau and that the added cells behave as described. This will follow from the fact that the added cells labeled $x$ form a horizontal strip, which is later filled with the numbers $|\lambda|+1,|\lambda|+2,\dots,|\lambda|+\mu_1$ from left to right.

First we observe that the cells shifted into another row (from the tail or, in Case 3, from the bottom of the middle column) form a horizontal strip essentially because they all have different entries - compare with Theorem~\ref{LemmaLR}.

Thus, when the residuum is $0$ or $L$ is of Case 3, cells labeled $x$ also form a horizontal strip, because they are added one row below. This strip only spans two rows. Therefore, after moving a cell to a higher row when \enquote{adjusting the parity}, the cells labeled $x$ still form a horizontal strip.
 
When the residuum is $1$ (and thus $a>0$ and as $a$ is even $a\geq 2$) and $L$ is of Case 1 or 2, an additional cell labeled $x$ is added to the third row. However in this case inspection of the classification of Theorem~\ref{LemmaLR} shows that the third row including all the added cells (including the additional one) is at most as long as the middle line, that is, $\lambda_2$. Now we can argue as above.

There cannot be more than one cell shifted to the first row, as we do not shift more than one cell.
\end{proof}

\begin{lemma}
Algorithm~\ref{UalgoLR3} is well-defined.
\end{lemma}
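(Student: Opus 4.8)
The plan is to walk through Algorithm~\ref{UalgoLR3} one instruction at a time and verify two things: that every choice the algorithm is asked to make exists and is uniquely determined, and that the returned pair $(L,Q)$ really is an orthogonal Littlewood-Richardson tableau in $\LRtabs$ together with a standard Young tableau in $\SYT(\lambda)$. Throughout I would keep the classification of Theorem~\ref{LemmaLR} at hand and, just as in the proof of Lemma~\ref{LemHorizonal1}, split into the four cases according to which outer shape and parity pattern the pair $(\tilde\lambda,\mu)$ forces.

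First the easy steps. Since $\tilde Q\in\SYT(\tilde\lambda)$ has three rows all of the same parity, the row-strict filling $L$ in the first line is well defined. By hypothesis the $\mu_1$ largest entries of $\tilde Q$ form a horizontal strip filled increasingly from left to right, so deleting them leaves a genuine standard Young tableau $Q$ of some shape $\lambda$ with $\ell(\lambda)\le 3$; this is the output $Q\in\SYT(\lambda)$. Marking the deleted cells with $x$'s and rotating by $-\tfrac{\pi}{2}$ yields a diagram with at most three columns in which the $x$-cells occupy a controlled skew shape, with at most one $x$ in the column coming from the first row of $\tilde Q$. I would record precisely how many $x$-cells lie in each column, since that is exactly what the loop consumes.

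The substance is the column loop together with the final parity correction. For the loop I would show that whenever the algorithm is told to mark the bottommost unmarked numbered cell in the current column or, failing that, in the column to the right, such a cell is available: the $x$-cells of a given column get matched against numbered cells strictly below them or one column over, and the relevant counts are pinned down by the shape constraints of Theorem~\ref{LemmaLR} together with the parity of $\tilde\lambda$. After the marked cells are moved to the leftmost column, the rightmost column is split off and the leftmost column is shifted down to give the tail length $\mu_1$, producing a two-column skew tableau together with a single column; here I would verify semistandardness and that the residuum is at most $1$, again by inspecting the four cases. Finally, the conditional step that corrects the parity: when the second column has odd length and $\mu_1\neq 0$ the target is exactly Case~3 of Theorem~\ref{LemmaLR}, and I would prove that there is a unique smallest entry which can switch columns so that, after re-shifting to tail length $\mu_1$, the result meets the Case~3 requirements (residuum $1$, $\lambda_2=b-1\le c$, and the gap $c+1$ at the bottom of the middle column). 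Uniqueness follows because moving any smaller entry breaks column-strictness or the residuum bound, while moving a larger one fails to give the columns the correct lengths.

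The main obstacle is this last step. In the forward direction, adjusting the parity is an unambiguous local move on a standard Young tableau, but in the inverse direction one must certify that among all entries that could conceivably change columns exactly one transfer lands inside the set described by Theorem~\ref{LemmaLR}; this really uses the fine structure of Cases~2 and~3, in particular Lemma~\ref{lemToProofTheorem}, items~\ref{lemPoint3Middle1} and~\ref{lemPoint4Middle2}, rather than mere bookkeeping. Once that is settled, matching the constructed $L$ against the relevant case of Theorem~\ref{LemmaLR} shows that $L\in\LRtabs$, with $i$ occurring $\lambda_i'$ times, so the algorithm is well defined.
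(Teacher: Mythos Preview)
Your proposal conflates what the paper treats as two separate lemmas. In the paper, ``well-defined'' means only that the algorithm never gets stuck: there are enough cells to mark during the column loop, and there is at least one entry available to move in the ``fix parity'' step. Showing that the output $(L,Q)$ actually lies in $\LRtabs\times\SYT(\lambda)$ is the content of the \emph{next} lemma. So most of what you outline (semistandardness, residuum $\le 1$, matching against Theorem~\ref{LemmaLR}) belongs elsewhere.

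For the narrow well-definedness claim, the paper's arguments are also more direct than your proposed case analysis. For the marking loop it simply observes that the $x$-cells, being a horizontal strip before rotation, form a vertical strip afterwards, so there are always enough numbered cells in the column to the right; the only subtlety is when a cell must be marked in the \emph{same} column, and that is handled by a short parity count. For ``fix parity'' the paper does not analyse which case of Theorem~\ref{LemmaLR} is being targeted at all: it just shows that when $\mu_1\ne 0$ the leftmost and middle columns cannot coincide, hence some entry can change columns.

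There is also a genuine error in your sketch: you assert that whenever ``fix parity'' fires the target is exactly Case~3. This is false. As the paper's proof of Lemma~\ref{LemfirstInverse} makes explicit, Case~2 inputs with residuum $1$ also lead to an odd middle column before ``fix parity'', and after the move the result is a Case~2 tableau, not Case~3. The distinction between the two outcomes depends on whether the smallest movable entry comes from the original middle column or from the tail, so your uniqueness argument tailored to Case~3 (gap $c+1$, $\lambda_2=b-1\le c$) would not cover all instances.
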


\begin{proof}
We have to show that there are cells to mark and that there is a cell to shift in the case that the middle and leftmost column have odd length.

As the cells labeled $x$ form a horizontal strip before rotating, they form a vertical strip afterwards. Therefore there are always enough cells to mark in the column to the right. Note that in the rightmost column there is at most one cell labeled $x$. Thus, if something is marked, it is marked in the same column. Thus we have to consider the parts that mark cells in the same column, namely the columns with an odd number of cells labeled $x$.

If there is an odd number of cells labeled $x$ in the leftmost column, either this marks something from the leftmost column or also from the middle one.

If there is an odd number of cells labeled $x$ in the middle column we need to show that the middle column is longer than the number of cells labeled $x$ in the leftmost one. Therefore we consider the original parity of the lines. As the number of cells labeled $x$ in the middle column is odd, the number of unmarked cells has opposite parity from the original column. Thus the leftmost column contains at least one less box.

The same argument holds for the rightmost column.

It remains to show that in \enquote{fix parity} there is a cell to shift if necessary. In particular we have to show that the middle column and the leftmost one are not the same if $\mu_1\neq 0$. Suppose therefore that they are the same.

We consider cells that might been shifted to the tail, thus to the leftmost column. Cells shifted from the rightmost column need to have bigger labels than any cells originally in the middle column as we shift the bottommost cells and there where at least as many cells labeled $x$ in the middle column. Thus there cannot be such cells in the tail.

There cannot be cells in the tail that are shifted from the middle column to the leftmost one as those columns are the same.

Finally there can be only one cell in the tail coming from the leftmost column. This happens if there is an odd number of $x$ in the leftmost column. However as the leftmost column and the middle one have to be the same, this is not possible due to the parity of the lengths.

Therefore the tail is $\emptyset$.
\end{proof}

\begin{lemma}
Algorithm~\ref{UalgoLR3} produces an orthogonal Littlewood-Richardson tableau $L$ in $\LRtabs$ and a standard Young tableau $Q$ in $\SYT(\lambda)$.
\end{lemma}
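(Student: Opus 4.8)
The plan is to treat the two outputs separately and to handle $L$ by reducing to the explicit classification already proved. The easy half is $Q$: by hypothesis the $\mu_1$ largest entries of $\tilde{Q}$ occupy a horizontal strip with at most one cell in the first row, so deleting these cells leaves a standard Young tableau; since $\tilde{Q}$ has at most three rows and we only remove cells, $Q$ lies in $\SYT(\lambda)$ for a partition $\lambda$ with $\ell(\lambda)\le 3$ and $|\lambda|=|\tilde\lambda|-\mu_1=r$. Moreover the rule \enquote{row $i$ of $L$ is filled with $1,\dots,\tilde\lambda_i$, then its largest entries are turned into $x$'s} means that, after the $x$-cells are eventually discarded, the lines of $L$ carry the fillings $1,2,\dots,\lambda_i$ and have exactly the row lengths $\lambda_1\ge\lambda_2\ge\lambda_3$ of $Q$; this is what will match the filling prescribed in Theorem~\ref{LemmaLR} and pins down the value of $\lambda$ for which we claim $L\in\LRtabs$.

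For $L$ itself I would invoke Theorem~\ref{LemmaLR}: since that theorem describes $\LRtabs$ \emph{exactly}, it suffices to check that the pair produced by Algorithm~\ref{UalgoLR3} is one of the four listed families, i.e.\ a two-column skew semistandard tableau $T$ with tail of length $\mu_1$ and residuum at most $1$, a single column $C$, the decreasing-by-one filling along each row, and the stated parity constraints. I would verify this by tracing the marking-and-shifting loop. After the rotation by $-\tfrac{\pi}{2}$ one has a three-column column-strict tableau whose $x$-cells occupy a vertical strip at the bottoms of the columns (they formed a horizontal strip of largest entries before rotating). Running through the loop column by column I would establish: (a) whenever cells get marked in the column to the right, they are its bottommost numbered cells and their entries exceed everything remaining in the current column, so the cells that are finally moved to the leftmost column, read from the bottom up, are increasing and dominate the top of what remains in the first column; (b) hence, after moving the marked cells to the leftmost column and sliding that column down to give the tail length $\mu_1$, the result is a genuine two-column skew semistandard tableau; (c) its residuum is at most $1$, and equals $1$ precisely when an odd number of $x$-cells stood in the leftmost column — which is the situation of the Corollary after Theorem~\ref{LemmaLR} and forces Case 3; and (d) the outer shape and parities are exactly those of one of Cases 1–4, the case being determined by the common parity of the rows of $\tilde\lambda$ together with whether the \enquote{fix parity} step fires, that step changing the length of one of the two left columns by one and thus converting the forbidden parity combination into the admissible one (Case 1 vs.\ Case 2, or Case 3 vs.\ Case 4).

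The step I expect to be the main obstacle is (b) together with (d) in the presence of the \enquote{fix parity} move: one must confirm that the entry it selects — the smallest one that can switch columns so that, after re-sliding, the two-column part is still an admissible skew shape — always exists (this is the content of the preceding well-definedness lemma, which I would cite) and, more delicately, that after the switch the filling is still decreasing-by-one from the right along every row and the residuum/parity bookkeeping lands in exactly one of the four shapes of Theorem~\ref{LemmaLR} rather than outside it. Once $L$ has been matched with an object of Theorem~\ref{LemmaLR}, the remaining membership conditions for $\LRtabs$, namely that $i$ occurs $\lambda_i'$ times and that $\varepsilon_i(L)=0$ for $i\neq 0$, come for free, because Theorem~\ref{LemmaLR} is an exact description of $\LRtabs$; combined with the first paragraph this gives $L\in\LRtabs$ and $Q\in\SYT(\lambda)$, as required.
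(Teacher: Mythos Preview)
Your overall plan---dispose of $Q$ trivially and then verify $L$ against the explicit classification of Theorem~\ref{LemmaLR}---is exactly what the paper does. But two of the claims in your case analysis are wrong and would break the argument.

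First, in (c) you say residuum $1$ \enquote{forces Case~3}. It does not: Cases~1 and~2 in Theorem~\ref{LemmaLR} both explicitly permit residuum $\le 1$, and the Corollary you cite says only that residuum $1$ means the leftmost line has a single position in the tail with the middle column non-empty, which is compatible with Cases~1--3 alike. Second, in (d) your pairing \enquote{Case~1 vs.\ Case~2, or Case~3 vs.\ Case~4} under the fix-parity step cannot be right: Case~4 requires $\mu=\emptyset$, while fix parity fires only when $\mu_1\neq 0$, so it never touches Case~4. The correct dichotomy, as in the paper, is governed by the parity of the \emph{rightmost} column (even $\Rightarrow$ Case~1; odd $\Rightarrow$ Cases~2 or~3), and within the odd branch one lands in Case~3 precisely when fix parity has moved an element from the leftmost column into the middle column, producing either a gap there or a middle column longer than the rightmost. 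You also omit the genuinely non-trivial check for Case~2, namely the inequality $\lambda_1<c+\mu_1$; the paper handles this by contradiction, showing that equality would force fix parity to create a gap or an over-long middle column, i.e.\ land in Case~3 instead. Without repairing these points your verification against Theorem~\ref{LemmaLR} does not go through.
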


\begin{proof}
Algorithm~\ref{UalgoLR3} produces a standard Young tableau $Q$ because its input $\tilde{Q}$ is a standard Young tableau from which the largest $\mu_1$ entries are deleted to obtain $Q$.

Now we consider $L$. We point out that $L$ has lines filled with $1,2,\dots, \lambda_\ell$. Moreover, there are gaps $j$ only in the tail, or possibly in the middle column at the bottommost position. Those are always in such a way, that $j-1$ is the last position of a column to the right.  Furthermore, due to \enquote{fix parity} the parity of $a$ and $b$ is always even.
The residuum is always less or equal 1 as there is at most one number from the original leftmost column in the tail.

Now if the rightmost column has even length, the middle one also has even length because label cells with $x$ and mark and shift cells can only change parity if the leftmost line consists of exactly one position and this is labeled $x$ and this does not change the parity of the rightmost line. Thus if the rightmost column has even length and if the tail has length $0$, $L$ is an orthogonal Littlewood-Richardson tableau.

If the rightmost column has odd length, there is no gap in the middle column and the middle column is shorter than the rightmost one, we show that this is Case 2, thus $c+\mu_1<\lambda_1$.
Suppose the rightmost column has odd length and $c+\mu_1=\lambda_1$. Then there would have been no cell labeled $x$ in the leftmost column. Thus the leftmost column with out the tail is has odd length. Thus the middle column has odd length. Thus \enquote{fix parity} is done. This has to move an element from the leftmost column to the middle one as otherwise the tail would get longer, and this has to be the topmost position of the tail. Thus we get either a gap in the middle column or a middle column that is longer than the right one.

It there is a gap in the middle column or the middle column is longer than the rightmost one, we have to show that this is Case 3, thus the residuum is 1, the middle line is only in the middle column and $a>0$ and the rightmost column has odd length. If this occurs, there was some change from the leftmost column to the middle one in \enquote{fix parity}. Therefore the rightmost column has odd length. Now the residuum is 1 and $a>0$ as we shifted the leftmost column down by one. The middle line is only in the middle column as middle line elements in the leftmost column would have been smaller candidates to change column.
\end{proof}

Now we show the (perhaps surprising) fact that Algorithm~\ref{algoLR3} and Algorithm~\ref{UalgoLR3} are inverse.

\begin{lemma}
\label{LemfirstInverse}
Algorithm~\ref{algoLR3} and Algorithm~\ref{UalgoLR3} are inverse.
\end{lemma}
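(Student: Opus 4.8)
The plan is to verify that the two algorithms undo each other step by step, reading Algorithm~\ref{UalgoLR3} as Algorithm~\ref{algoLR3} run backwards. First I would dispose of the parts involving the filling of $Q$. Since Algorithm~\ref{algoLR3} depends only on the shape of $Q$ (as observed above), it suffices to follow the underlying shapes together with the positions of the cells labelled $x$, the standard filling being merely carried along. The last step of Algorithm~\ref{algoLR3}, which writes $|\lambda|+1,\dots,|\lambda|+\mu_1$ from left to right into the $x$-cells, is inverted by deleting the $\mu_1$ largest entries of $\tilde Q$: by Lemma~\ref{LemHorizonal1} the $x$-cells form a horizontal strip with at most one cell in the first row and are filled increasingly, hence they carry exactly the $\mu_1$ largest entries of $\tilde Q$, and the remaining tableau is $Q$. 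The passage between $L$ and the row-strict tableau $\tilde L$ (and back) is likewise inverse on the nose: by Lemma~\ref{LRDefLem} and part~\ref{lemPoint2Shape} of Lemma~\ref{lemToProofTheorem}, the entry $i$ occurs $\lambda_i'$ times in $L$, so the multiset of column lengths of $L$ coincides with that of the shape $\tilde\lambda$ of $\tilde L$; thus making $L$ row-strict and reversing the rotation ($+\tfrac{\pi}{2}$ then $-\tfrac{\pi}{2}$) recovers the lines of $L$.

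What remains is to show that the marking and shifting part of Algorithm~\ref{UalgoLR3} (mark, beneath each $x$-cell, the bottommost unmarked numbered cells of the neighbouring column, shift the marked cells into the leftmost column, split off the rightmost column, and shift down to make a tail of length $\mu_1$) inverts steps $2$ and $3$ of Algorithm~\ref{algoLR3} (add an $x$-cell in row $i+1$ below each shifted position of row $i$, and pad row $3$ with $x$-cells up to $\mu_1$). I would argue this along the four cases of Theorem~\ref{LemmaLR}. In each case one identifies precisely which cells become $x$-cells: by parts~\ref{lemPoint5Gaps12}, \ref{lemPoint3Middle1} and~\ref{lemPoint4Middle2} of Lemma~\ref{lemToProofTheorem} the cells that migrate to a new row are exactly the gaps of $L$, and every gap $j$ has $j-1$ as the largest entry of the column immediately to its right; after rotation this says that each $x$-cell sits directly above the last cell of a strictly shorter column, together with the extra $x$-cells appended to row $3$. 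Read backwards, the marking rule of Algorithm~\ref{UalgoLR3} singles out exactly the migrated cells, the shift to the leftmost column restores the tail in its original position, and separating off the rightmost column is inverse to the fact (part~\ref{lemPoint1Gap} of Lemma~\ref{lemToProofTheorem}) that the rightmost line produced no $x$-cell.

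The main obstacle, and the last point to be checked, is that the two parity corrections match. In Algorithm~\ref{algoLR3} the parities of $a$ and $b$ are even and that of $c$ is fixed by the case, so at most one cell must move, and the instruction ``move the last cell of the bottommost even (resp.\ odd) row to the other even (resp.\ odd) row'' determines it uniquely; using $\lambda_1<c+\mu_1$ in Case $2$ and $\lambda_2=b-1$, $a\ge 2$ in Case $3$, one checks this cell is always present and lands in row $3$ or, in Case $3$, at the top of the tail. Conversely, the ``fix parity'' step of Algorithm~\ref{UalgoLR3} is triggered exactly when the middle column has odd length and $\mu_1\neq 0$, and ``the smallest entry that can change columns so that the result is an orthogonal Littlewood-Richardson tableau'' is forced to be the very same cell: a smaller choice would destroy semistandardness or push the residuum above $1$, while making no move would leave $a$ or $b$ odd. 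Matching these forced moves in both directions, and noting that the parity of the total number of cells --- which selects the branch in Algorithm~\ref{algoLR3} --- is controlled by the parity of $\lambda_1+\mu_1$ in Cases $2$ and $3$ and is automatically even in Cases $1$ and $4$, gives that the composition of Algorithm~\ref{UalgoLR3} after Algorithm~\ref{algoLR3} is the identity. The reverse composition is handled symmetrically, the only additional ingredient being the already established fact that Algorithm~\ref{UalgoLR3} returns a tableau lying in $\LRtabs$ in one of the four forms of Theorem~\ref{LemmaLR}, so that Algorithm~\ref{algoLR3} applied to it retraces the same intermediate skew shapes in reverse order.
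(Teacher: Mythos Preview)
Your plan follows the same decomposition as the paper's proof: handle the filling of $Q$ trivially via Lemma~\ref{LemHorizonal1}, match the rotation/row-strict passage, then argue that the marking procedure of Algorithm~\ref{UalgoLR3} inverts the $x$-cell placement of Algorithm~\ref{algoLR3}, with special attention to the parity steps. That structural outline is correct and matches the paper.

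The gap is in the execution of the central step. Your claim ``Read backwards, the marking rule of Algorithm~\ref{UalgoLR3} singles out exactly the migrated cells'' is precisely the crux, and it does not follow from the abstract gap description you give. The difficulty is that Algorithm~\ref{algoLR3}'s ``adjust parity'' step moves an $x$-cell between rows \emph{after} the $x$-cells have been placed, so the pattern of $x$-cells that Algorithm~\ref{UalgoLR3} sees is not simply ``one $x$ below each migrated cell''. Whether the marking rule (which branches on the parity of the number of $x$-cells in each column, marking either in the same column or in the column to the right) recovers the original tail depends on how that one shifted $x$-cell interacts with the parities. The paper handles this by an explicit eight-way case split per LR case: write the tail as $\ell_1$ cells going to the middle line and $\ell_2$ to the rightmost, then check all combinations of the parities of $\ell_1,\ell_2$ and the residuum. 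In each subcase one tracks where ``adjust parity'' sends the extra $x$, counts the resulting $x$-cells in each column modulo $2$, and verifies that the marking rule then picks out exactly $\ell_1$ cells from the middle column and $\ell_2$ from the rightmost (plus possibly one from the leftmost when the residuum is $1$). Your sketch does not supply this bookkeeping, and the gap characterisation you invoke (``each $x$-cell sits directly above the last cell of a strictly shorter column'') is not literally true once ``adjust parity'' has acted.

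A second, smaller imprecision: the migrated cells are not ``exactly the gaps of $L$''. In Cases~1 and~2 the top tail entry need not be a gap (when the residuum is~$0$ it is $b-a+1$ with $b-a$ directly above), yet it still migrates. The correct description is ``the tail entries that are not in their home column'', which is how Algorithm~\ref{algoLR3} phrases it; Lemma~\ref{lemToProofTheorem} tells you the structure of those entries but does not identify them with the gaps. So the argument needs to track positions, not just gaps, and this is again what forces the parity case analysis.
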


\begin{proof}
We show first, that applying Algorithm~\ref{algoLR3} on a pair consisting of an orthogonal Littlewood-Richardson tableau $L$ and a standard Young tableau $Q$ with $\mu\neq \emptyset$ and Algorithm~\ref{UalgoLR3} on the outcome ($\tilde{Q}$, $\mu$) returns the original pair. This is obvious for the standard Young tableau $Q$ as exactly the added cells are removed.
For $L$ we consider Cases 1 to 4 separately.
\begin{enumerate}

\item We start with Case 1. In this case the rows of $\tilde{Q}$ have even length (because $|\tilde{Q}|=\mu_1+|\lambda|=\mu_1+\mu_1+2b-a+c$). We show that Algorithm~\ref{UalgoLR3} marks precisely the cells coming from the tail. The following sketch illustrates the situation just before \enquote{adjust parity} in Algorithm~\ref{algoLR3}. We display the three rows of the row-strict tableau obtained from $L$ as three separate columns and distinguish the two possible values of the residuum.

\begin{tikzpicture}[scale=0.35]
  \draw (0,-1.5) -- (0,4) -- (1,4) -- (1, 5.5) -- (2,5.5) -- (2,1.5) -- (1,1.5) -- (1,-1.5)  -- (0,-1.5);
  \draw (3,1.5) -- (3,7) -- (4,7) -- (4, 1.5) -- (3,1.5);
  \draw[line width=1mm, color=YellowGreen] (0.5,1.5) -- (0.5,3.75);
  \draw[line width=1mm, color=Thistle] (0.5,-0.125) %
  -- (0.5,1) arc[start angle=180, end angle=90, radius=0.25] %
  -- (1.25,1.25)  arc[start angle=270, end angle=360, radius=0.25] %
  -- (1.5,5.25);
  \draw[line width=1mm, color=SkyBlue] (0.5,-1.25) %
  -- (0.5,-0.625) arc[start angle=180, end angle=90, radius=0.25] %
  -- (3.25,-0.375) arc[start angle=270, end angle=360, radius=0.25] %
  -- (3.5,6.75);
    \draw (0,-1.5) node{};
       \draw[black,decorate,decoration={brace,amplitude=4pt}] (1.05,1.45) -- (1.05,-0.12);
       \draw[black,decorate,decoration={brace,amplitude=4pt}] (1.05,-0.13) -- (1.05,-1.45);
    \draw (1.8,0.5) node{$\ell_1$};
    \draw (1.8,-1) node{$\ell_2$};
    \draw (1,6.5) node{residuum};
    \draw (0,5.5) node{is $0$};
\end{tikzpicture}
\begin{tikzpicture}[scale=0.35]
 \draw (0,-1.5) node{};
 \draw (0,5) node{};
 \draw [->,decorate,
decoration={snake,amplitude=.4mm,segment length=2mm,post length=1mm, pre length=1mm}] (0,3) -- (1.5,3);
\end{tikzpicture}
\begin{tikzpicture}[scale=0.35]
  \draw (0,4) -- (0,7) -- (1,7) --  (1,4) -- (0,4);
  \draw (4,1.5) -- (4, 7) -- (5,7) -- (5,1.5) -- (4,1.5);
  \draw (8,-1) -- (8,7) -- (9,7) -- (9, -1) -- (8,-1);
  \draw[line width=1mm, color=YellowGreen] (0.5,4.25) -- (0.5,6.75);
  \draw[line width=1mm, color=Thistle] (4.5,1.75)  -- (4.5,6.75);
  \draw[line width=1mm, color=SkyBlue] (8.5,-0.75)  -- (8.5,6.75);
   \draw[black,decorate,decoration={brace,amplitude=4pt}] (1.05,6.95) -- (1.05,4.05);
      \draw[black,decorate,decoration={brace,amplitude=4pt}] (0.95,3.95) -- (0.95,2.55);
   \draw[black,decorate,decoration={brace,amplitude=4pt}] (5.05,6.95) -- (5.05,3.05);
   \draw[black,decorate,decoration={brace,amplitude=4pt}] (5.05,3) -- (5.05,1.55);
      \draw[black,decorate,decoration={brace,amplitude=4pt}] (4.95,1.45) -- (4.95,0.05);
   \draw[black,decorate,decoration={brace,amplitude=4pt}] (9.05,6.95) -- (9.05,0.55);
   \draw[black,decorate,decoration={brace,amplitude=4pt}] (9.05,0.45) -- (9.05,-0.95);
  \draw (6,5) node {$b$};
  \draw (6.3,4) node {$even$};
  \draw (10,4.3) node {$c$};
  \draw (10.3,3.3) node {$even$};
  \draw (2.5,5.5) node {$b-a$};
  \draw (2.4,4.5) node {$even$};
  \draw (1.8,3) node {$\ell_1$};
  \draw (6,2.2) node {$\ell_1$};
  \draw (6,0.7) node {$\ell_2$};
  \draw (10.2,-0.3) node {$\ell_2$};
  \draw (0.5,3.7) node{$x$};
  \draw (0.5,3.4) node{$\vdots$};
  \draw (0.5,2.7) node{$x$};
  \draw (4.5,1.2) node{$x$};
  \draw (4.5,0.9) node{$\vdots$};
  \draw (4.5,0.2) node{$x$};
  \draw (0,-1.5) node{};
\end{tikzpicture}
\hspace{0.3cm}
\begin{tikzpicture}[scale=0.35]
  \draw (0,-1.5) -- (0,4) -- (1,4) -- (1, 5.5) -- (2,5.5) -- (2,1.5) -- (1,1.5) -- (1,-1.5)  -- (0,-1.5);
  \draw (3,1.5) -- (3,7) -- (4,7) -- (4, 1.5) -- (3,1.5);
  \draw[line width=1mm, color=YellowGreen] (0.5,1) -- (0.5,3.75);
  \draw[line width=1mm, color=Thistle] (0.5,-0.125) %
  -- (0.5,0.5) arc[start angle=180, end angle=90, radius=0.25] %
  -- (1.25,0.75)  arc[start angle=270, end angle=360, radius=0.25] %
  -- (1.5,5.25);
  \draw[line width=1mm, color=SkyBlue] (0.5,-1.25) %
  -- (0.5,-0.625) arc[start angle=180, end angle=90, radius=0.25] %
  -- (3.25,-0.375) arc[start angle=270, end angle=360, radius=0.25] %
  -- (3.5,6.75);
    \draw (0,-1.5) node{};
       \draw[black,decorate,decoration={brace,amplitude=4pt}] (1.05,1.05) -- (1.05,-0.12);
       \draw[black,decorate,decoration={brace,amplitude=4pt}] (1.05,-0.13) -- (1.05,-1.45);
        \draw (0.5,1.2) node[draw, circle, scale=0.8] {};
    \draw (1.8,0.3) node{$\ell_1$};
    \draw (1.8,-1) node{$\ell_2$};
    \draw (1,6.5) node{residuum};
    \draw (0,5.5) node{is $1$};
\end{tikzpicture}
\begin{tikzpicture}[scale=0.35]
 \draw (0,-1.5) node{};
 \draw (0,5) node{};
 \draw [->,decorate,
decoration={snake,amplitude=.4mm,segment length=2mm,post length=1mm, pre length=1mm}] (0,3) -- (1.5,3);
\end{tikzpicture}
\begin{tikzpicture}[scale=0.35]
  \draw (0,4) -- (0,7) -- (1,7) --  (1,4) -- (0,4);
  \draw (4,1.5) -- (4, 7) -- (5,7) -- (5,1.5) -- (4,1.5);
  \draw (8,-1) -- (8,7) -- (9,7) -- (9, -1) -- (8,-1);
  \draw[line width=1mm, color=YellowGreen] (0.5,4.25) -- (0.5,6.75);
  \draw[line width=1mm, color=Thistle] (4.5,1.75)  -- (4.5,6.75);
  \draw[line width=1mm, color=SkyBlue] (8.5,-0.75)  -- (8.5,6.75);
   \draw[black,decorate,decoration={brace,amplitude=4pt}] (1.05,6.95) -- (1.05,4.45);
      \draw[black,decorate,decoration={brace,amplitude=4pt}] (0.95,3.95) -- (0.95,2.55);
   \draw[black,decorate,decoration={brace,amplitude=4pt}] (5.05,6.95) -- (5.05,3.05);
   \draw[black,decorate,decoration={brace,amplitude=4pt}] (5.05,3) -- (5.05,1.55);
      \draw[black,decorate,decoration={brace,amplitude=4pt}] (4.95,1.45) -- (4.95,0.05);
   \draw[black,decorate,decoration={brace,amplitude=4pt}] (9.05,6.95) -- (9.05,0.55);
   \draw[black,decorate,decoration={brace,amplitude=4pt}] (9.05,0.45) -- (9.05,-0.95);
          \draw (0.5,4.45) node[draw, circle, scale=0.8] {};
  \draw (6,5) node {$b$};
  \draw (6.3,4) node {$even$};
  \draw (10,4.3) node {$c$};
  \draw (10.3,3.3) node {$even$};
  \draw (2.5,6) node {$b-a$};
  \draw (2.4,5) node {$even$};
  \draw (1.8,3) node {$\ell_1$};
  \draw (6,2.2) node {$\ell_1$};
  \draw (6,0.7) node {$\ell_2$};
  \draw (10.2,-0.3) node {$\ell_2$};
  \draw (0.5,3.7) node{$x$};
  \draw (0.5,3.4) node{$\vdots$};
  \draw (0.5,2.7) node{$x$};
  \draw (0.5,2.2) node{$x$};
  \draw (4.5,1.2) node{$x$};
  \draw (4.5,0.9) node{$\vdots$};
  \draw (4.5,0.2) node{$x$};
  \draw (0,-1.5) node{};
\end{tikzpicture}

Now we have eight cases as $\ell_1$ and $\ell_2$ could be even or odd. We first consider the case of residuum $0$ and $\ell_1$ and $\ell_2$ both even.
In this case we mark $\ell_1$ position in the middle column and $\ell_2$ in the rightmost one, thus exactly those that came from the tail. If for example $\ell_1$ is odd and $\ell_2$ is even, we see that one cell labeled $x$ will move to the middle column, and an even number of such positions will remain in the leftmost one. The moved position will mark one position in the middle column (there are now $\ell_2+1$ cells labeled $x$), the other ones (as there are even many left) will also mark one position each in the middle column, thus exactly those coming from the tail. 

The other cases are analogous so we only consider one more example: residuum 1 and $\ell_1$ and $\ell_2$ both odd. Now our leftmost column has odd length ($(b-a)+1+\ell_1+1$), our middle one has even length ($b+\ell_1+\ell_2$) and out rightmost one has odd length ($c+\ell_2$). Thus a cell labeled $x$ will move from the leftmost column to the rightmost one. Now there is an odd number of cells labeled $x$ in the leftmost column ($\ell_1$), an odd number of cells labeled $x$ in the middle column ($\ell_2$) and one thus an odd number of cells labeled $x$ in the rightmost column. Thus one cell is marked in the leftmost column, $\ell_1-1+1$ in the middle column and $\ell_2-1+1$ in the rightmost one. Those are exactly those coming from the tail.

There is no \enquote{fix parity} in Algorithm~\ref{UalgoLR3} in Case 1.

\item Case 2 is a little bit more complicated but similar. In this case the rows of $\tilde{Q}$ have odd length as $c$ is odd. To illustrate the situation we get the same sketch as above, except that $c$ is odd.

Again, we provide the details only for one of the eight different cases and leave the remaining to the reader. Let us consider the case of residuum $1$, and $\ell_1$ and $\ell_2$ both even.

In Algorithm~\ref{algoLR3} we move a cell labeled $x$ from the leftmost column to the middle one to obtain odd length. Now the number of cells labeled $x$ in the middle row  is $\ell_2+1$. Thus in Algorithm~\ref{UalgoLR3} there are $\ell_2$ cells of the rightmost column and one cell of the middle column marked. The number of cells labeled $x$ in the leftmost column is $\ell_1$. Thus $\ell_1$ further cells are marked in the middle column.

We now have to show that after \enquote{fix parity}, exactly the $\ell_1$ largest of the $\ell_1+1$ marked cells from the middle column end up in the tail.

Because $b-1$, the number of remaining non-marked cells in the middle column is odd and $\mu_1\neq 0$ we have to \enquote{fix the parity} in Algorithm~\ref{UalgoLR3}. Thus we consider the possible candidates for such a shift. The largest entry in the leftmost column not in the tail is smaller or equal to the largest entry in the middle column because this column is longer as the residuum is $1$ and thus $a>0$. Thus this cannot change. The largest entry in the middle column cannot change either, as this would make the tail longer by $1$. Thus the smallest entry that was shifted from the middle column to the leftmost one has to change to the middle column. This is exactly what we wanted to show. Note that by moving it, we also get residuum $1$.

\item Case 3 is also similar but has a slightly different illustration. In this case the rows of $\tilde{Q}$ have also odd length as $c$ is odd and the residuum is always $1$.

\begin{tikzpicture}[scale=0.35]
  \draw (0,-2) -- (0,3.5) -- (1,3.5) -- (1, 5) -- (2,5) -- (2,1) -- (1,1)  -- (1,-2)  -- (0,-2);
  \draw (3,1.5) -- (3,7) -- (4,7) -- (4, 1.5) -- (3,1.5);
  \draw[line width=1mm, color=YellowGreen] (0.5,0.5) -- (0.5,3.25);
    \draw[line width=1mm, color=Thistle] (1.5,1.5) -- (1.5,4.75);
  \draw[line width=1mm, color=SkyBlue] (0.5,-1.75) %
  -- (0.5,0) arc[start angle=180, end angle=90, radius=0.25] %
  -- (1.25,0.25) arc[start angle=270, end angle=360, radius=0.25] %
  -- (1.5,1) arc[start angle=180, end angle=90, radius=0.25] %
  -- (3.25,1.25) arc[start angle=270, end angle=360, radius=0.25] %
  -- (3.5,6.75);
  \draw (0,-2) node{};
          \draw (0.5,0.7) node[draw, circle, scale=0.8] {};
 \draw[black,decorate,decoration={brace,amplitude=4pt}] (1.05,0.15) -- (1.05,-1.95);
 \draw (2.7,-1) node {$\mu_1-1$};
\end{tikzpicture}
\hspace{0.5cm}
\begin{tikzpicture}[scale=0.35]
  \draw (0,-2) node{};
 \draw (0,5) node{};
 \draw [->,decorate,
decoration={snake,amplitude=.4mm,segment length=2mm,post length=1mm, pre length=1mm}] (0,3) -- (1.5,3);
\end{tikzpicture}
\hspace{0.5cm}
\begin{tikzpicture}[scale=0.35]
  \draw (0,4) -- (0,7) -- (1,7) --  (1,4) -- (0,4);
  \draw (5,3) -- (5, 7) -- (6,7) -- (6,3) -- (5,3);
  \draw (10,-1) -- (10,7) -- (11,7) -- (11, -1) -- (10,-1);
  \draw[line width=1mm, color=YellowGreen] (0.5,4.25) -- (0.5,6.75);
  \draw[line width=1mm, color=Thistle] (5.5,3.25)  -- (5.5,6.75);
  \draw[line width=1mm, color=SkyBlue] (10.5,-0.75)  -- (10.5,6.75);
   \draw[black,decorate,decoration={brace,amplitude=4pt}] (1.05,6.95) -- (1.05,4.05);
   \draw[black,decorate,decoration={brace,amplitude=4pt}] (6.05,6.95) -- (6.05,3.05);
      \draw[black,decorate,decoration={brace,amplitude=4pt}] (5.95,2.95) -- (5.95,0.05);
   \draw[black,decorate,decoration={brace,amplitude=4pt}] (11.05,6.95) -- (11.05,2.05);
   \draw[black,decorate,decoration={brace,amplitude=4pt}] (11.05,1.95) -- (11.05,-0.95);
  \draw (7.9,5) node {$b-1$};
  \draw (7.7,4) node {$odd$};
  \draw (12,4.8) node {$c$};
  \draw (12.3,3.8) node {$odd$};
  \draw (3.2,6) node {$b-a+1$};
  \draw (3,5) node {$odd$};
  \draw (7,1.4) node {$\mu_1$};
  \draw (12.2,0.4) node {$\mu_1$};
  \draw (5.5,2.5) node{$x$};
  \draw (5.5,1.7) node{$\vdots$};
  \draw (5.5,0.2) node{$x$};
  \draw (0,-1.5) node{};
\end{tikzpicture}

In this case $\mu_1$ elements of the rightmost column are marked. We have to show, that only the largest $\mu_1-1$ remain in the rightmost column.

As the middle column has odd length $b-1$ an element has to change column. The largest entry from the leftmost column not in the tail is smaller than $b-1$ which is the largest entry in the middle column because due to the residuum $a>0$. Thus this cannot change column. The largest entry in the middle column cannot change column either, because this would give a tail, that is longer than $\mu_1$. Thus the smallest element in the tail, that is the smallest one that was shifted from the middle column to the left one, changes column. This is what we wanted to show.

\item In Case 4, $L$ is not changed at all. 
\end{enumerate}

Now we show that applying first Algorithm~\ref{UalgoLR3} on a pair $(\tilde{Q},\mu)$ and then Algorithm~\ref{algoLR3} on the outcome returns the original pair. Therefore we have to show that we add the same positions to $Q$ as we deleted before to obtain $\tilde{Q}$. As they are filled from left to right increasingly and $\mu$ is encoded in the length of the tail, this is sufficient.

Throughout this proof we arrange $L$ like we did in the first part of the proof, namely as three columns.

The cells that are shifted to the tail in Algorithm~\ref{UalgoLR3} are exactly those that change column in Algorithm~\ref{algoLR3}, except for the case that we have to \enquote{fix parity} in Algorithm~\ref{UalgoLR3}.

In the former case exactly the same cells labeled $x$ occur after Algorithm~\ref{algoLR3}. One can prove this by the same case study as above. Once again we consider one case exemplarily: $\tilde{Q}$ has rows of even length, there is one cell labeled $x$ in the rightmost column, there is an even number $e$ of cells labeled $x$ in the middle column and there is an odd number $o$ of cells labeled $x$ in the leftmost column. Thus $e+1$ cells are marked in the rightmost column, $o-1$ cells are marked in the middle column and one position is marked in the leftmost column in Algorithm~\ref{UalgoLR3}. Therefore, in Algorithm~\ref{algoLR3} there are $e+1$ cells labeled $x$ inserted in the middle column, and $o-1+1$ inserted in the rightmost one. Thus when \enquote{adjust the parity} in Algorithm~\ref{algoLR3} one cell labeled $x$ is shifted to the rightmost column.

In the latter case, that there is a \enquote{fix parity} in Algorithm~\ref{UalgoLR3} we have two cases.

If an element changes column that belonged to the leftmost column or the middle one, this does not change where the cells labeled $x$ are added as those are added in both cases into the leftmost column.

If an element from the leftmost column is moved to the middle one that belonged to the rightmost column, this does not change anything, as this is still an element that is in the wrong column. Thus cells labeled $x$ are put into the middle column and one of them is shifted right, if the parity demands it.
\end{proof}

\subsection{The second Bijection}
\label{Sub2ndAlgo}

In the second part of our bijection, we map three-rowed standard Young tableaux, all of whose row lengths have the same parity, to vacillating tableaux of shape $\emptyset$. Moreover, this bijection should preserve descents. This property restricts our choice, however does not define it by far. In Figure~\ref{FigAllTabs} in the appendix we see standard Young tableaux and the corresponding vacillating tableaux up to a size of $r=6$.

\begin{table}
\centering
\caption{Notation for Algorithms~\ref{algo3dim} and~\ref{Ualgo3dim}}
\label{TabNotation}
\begin{tabular}{|p{6.5cm}|p{9cm}|}
\hline
Labeled word in $\{\pm 1,0\}$ & A word in $\{\pm 1,0\}$. Each letter of this word is labeled by an integer $1\leq i \leq r$ strictly increasing from left to right. Thus each position consists of a label and an entry.\\
\hline
A position in a labeled word is \enquote{on level $\ell$} & The minimum of the sum over all entries strictly to the left and the one over all entries to the left including the one of this position is $\ell$.

Illustration of positions on level $\ell$:
\begin{tikzpicture}[scale=0.35]
\draw[dotted] (1,0)--(6,0);
\draw (0.5,0) node{$\ell$};
\draw (1.5,1)--(2.5,0);
\draw (3,0)--(4,1);
\draw (4.5,0) -- (5.5,0);
\end{tikzpicture}\\
\hline
A $0$ is in \enquote{3-row-position} & This $0$ is either on level $2$ or higher or it is the rightmost $0$ of an odd sequence of $0$'s on level $1$.\\
\hline
\multicolumn{2}{p{15.5cm}}{We refer to positions by their labels and take instructions literally:}\\
\hline
 \enquote{Insert $e$ at $m$} into a given labeled word, which does not contain the label $m$ & We insert a new position with entry $e$ and label $m$ such that the labels are still sorted.\\
\hline
\enquote{The position to the right  (or left) of $m$} in a labeled word, where $m$ does not occur & The position of the right (or left) of the spot where we would insert $m$.\\
\hline
\multicolumn{2}{p{15.5cm}}{We refer to processes in the algorithms in the following way:}\\
\hline
\enquote{Connect / Separate} & \enquote{Connect} means that a sequence $-1,1$ on level $0$ is changed into a sequence $0,0$ on level $1$:
\begin{tikzpicture}[scale=0.35]
\draw[dotted] (0.5,1)--(1,1);
\draw (1,1)--(2,0)--(3,1);
\draw[dotted] (3,1) -- (3.5,1);
\draw (1,0.25) node {};
\end{tikzpicture}
\begin{tikzpicture}[scale=0.35]
 \draw [->,decorate,
decoration={snake,amplitude=.4mm,segment length=2mm,post length=1mm, pre length=1mm}] (0,3) -- (2,3);
\end{tikzpicture}
\begin{tikzpicture}[scale=0.35]
\draw[dotted] (0.5,1)--(1,1);
\draw (1,1)--(2,1)--(3,1);
\draw[dotted] (3,1) -- (3.5,1);
\draw (1,0.25) node {};
\end{tikzpicture}

\enquote{Separate} means that a sequence $0,0$ on level $1$  is changed into a sequence $-1,1$ on level $0$.

This nomenclature is justified by Theorem~\ref{TheoConcat}. \\
\hline
\end{tabular}
\end{table}

\begin{algorithm}
\label{algo3dim}
\SetKwInOut{Input}{input}\SetKwInOut{Output}{output}
 \Input{standard Young tableau $Q$ with at most $3$ rows, all of them of even length}
 \Output{vacillating tableau $V$ of shape $\emptyset$ of even length}
 \tcc{Inserting $1^{\text{st}}$ row}
 construct word $V=(1,-1,1,-1,\dots,1,-1)$ with same length as the first row of $Q$\;
 label the letters with the numbers of the first row of $Q$\;
  \tcc{Inserting $2^{\text{nd}}$ row}
 \For{pairs $a,b$ in the second row of $Q$, starting with the rightmost pair, going left}{
  insert $-1$ at $b$\tcc*{insert $b$}
  \eIf(\tcc*[f]{insert $a$ case 1}){position to the right of $a$ is $-1$ not labeled with $b$}
  {
   insert $0$ at $a$; change $-1$ to the right of $a$ into $0$\;
   }(\tcc*[f]{insert $a$ case 2})
   {
   insert $-1$ at $a$; change next $-1$ to the left of $a$ into $1$\;
  }
  change pairs of $1,-1$ between $a$ and $b$ into $0,0$\;
 }
   \tcc{Inserting $3^{\text{rd}}$ row}
  \For{pairs $a,b$ in the third row of $Q$, starting with the rightmost pair, going left}{
    insert $-1$ at $b$; change next $-1$ to the left of $b$ into $0$, let $\tilde{b}$ be its label\tcc*{insert $b$}
    start at $b$ and let $c$ be the current position, let $C$ be an empty list\;
    \While{$\tilde{a}$ undefined}
    {
    \uIf(\tcc*[f]{connect}){$c$ is $1$ on level $0$}
    {  
    change $c$ and the position to the left into $0$, put their labels into $C$\;
    }
    \ElseIf(\tcc*[f]{separate}){the number of positions strictly to the left is even, $c$ and the position to the left are $0$'s on level $1$ and $a$ is not inserted}
    {
    change $c$ into $1$ and the position to the left into $-1$\;
    }
    \uIf(\tcc*[f]{insert $a$}){$a$ is directly to the left of $c$}
    {
    insert $-1$ at $a$\;
    }
    \ElseIf{$a$ is inserted}
    {
    \lIf(\tcc*[f]{case 1}){ $c$ is $-1$ not labeled with $a$}
    {change $c$ into $0$, let $\tilde{a}$ be its label}
    \lElseIf(\tcc*[f]{case 2}){$c$ is $0$, not in $C$, left of $\tilde{b}$}
    {change $c$ into $1$, let $\tilde{a}$ be its label}
    }
    go one position to the left\;
    }
  }
  forget the labeling of $V$ and \Return $V$\;
 \caption{standard Young tableau to vacillating tableau}
\end{algorithm}

\begin{algorithm}
\label{Ualgo3dim}
\SetKwInOut{Input}{input}\SetKwInOut{Output}{output}
 \Input{vacillating tableau $V$ of shape $\emptyset$ of even length $r$}
 \Output{standard Young tableau $Q$ with at most 3 rows, all of them of even length}
 attach labels $\{1,2,\dots, r\}$ to $V$\;
   \tcc{Extracting $3^{\text{rd}}$ row}
  \While{there are $0$'s in 3-row-position}
  {
  let $p$ be the leftmost $0$ in 3-row-position and $p'$ be the position to its right\tcc*{extract $a$}
  \lIf(\tcc*[f]{case 2}){$p$ is on level $1$ and $p'$ is a $1$}
    {change $p'$ into $0$, let $\tilde{a}$ be $p'$}
  \lElse(\tcc*[f]{case 1}){change $p$ into $-1$, let $\tilde{a}$ be $p$}
  let $a$ be the next $-1$ to the right be $a$, delete $a$\;
  start at $\tilde{a}$, let the current position be $c$\;
  \While{$\tilde{b}$ undefined}
  {
  \uIf(\tcc*[f]{undo separate}){$c$ is $-1$ on level $0$}
  {change $c$ and the position to the right into $0,0$\;}
  \ElseIf(\tcc*[f]{undo connect}){the number of positions strictly to the left is odd, both $c$ and the position to the right are $0$ on level $1$}
  {change $c$ and the position to the right into $-1,1$\;}
  \If(\tcc*[f]{extract $b$}){$c$ is $0$ in 3-row-position}
  {change $c$ into $-1$, let $\tilde{b}$ be $c$; let $b$ be the next $-1$ to the right, delete $b$\;
  } 
  go one position to the right\;}
  insert $a,b$ into the third row of $Q$\;
  }

\tcc{Extracting $2^{\text{nd}}$ row}

\While{ $V\neq(1,-1,1,-1,\dots, 1,-1)$}
{
let $a$ be the leftmost position that is neither a $1$ nor on level $0$\tcc*{extract $a$}
\lIf(\tcc*[f]{case 2}){$a$ is $-1$}{change the leftmost $1$ on level $1$ into $-1$}
\lElse(\tcc*[f]{case 1}){change $0$ to the right of $a$ into $-1$}
delete $a$; let $b$ be the next $-1$ to the right of the new $-1$, delete $b$\tcc*{extract $b$}
change $0,0$ on level $0$ into $1,-1$\;
insert $a,b$ into the second row of $Q$\;
}

 \tcc{Extracting $1^{\text{st}}$ row}
 insert labels still in the word into the first row of $Q$\;

\Return $Q$\;

 \caption{vacillating tableau to standard Young tableau}
\end{algorithm}

Algorithm~\ref{algo3dim} maps standard Young tableaux with three rows of even length to vacillating tableaux of even length and shape $\emptyset$. As we saw in Section~\ref{bijection}, this is sufficient because for tableaux with rows of odd length we add three additional entries to the standard Young tableau.

We illustrate labeled words with labeled Riordan paths like we illustrate vacillating tableaux as Riordan paths. See Figure~\ref{FigScenarios} in the appendix for a set of possible insertion scenarios.

We give now a full iteration of Algorithm~\ref{algo3dim} on our running example.

\begin{example}
We start with inserting the first row by forming a word $1,-1,1,-1,\dots,1,-1$ labeled with the elements of the first row of the tableau:

\begin{tikzpicture}[scale=0.35]
  \draw (0,4) -- (6,4);
  \draw (0,3) -- (6,3);
  \draw (0,2) -- (4,2);
  \draw (0,1) -- (4,1);
  \draw (0,4) -- (0,1);
  \draw (1,4) -- (1,1);
  \draw (2,4) -- (2,1);
  \draw (3,4) -- (3,1);
  \draw (4,4) -- (4,1);
  \draw (5,4) -- (5,3);
  \draw (6,4) -- (6,3);
    \draw (0.5,3.5) node[draw, circle, scale=1.2] {};
  \draw (0.5,3.5) node {1};
    \draw (1.5,3.5) node[draw, circle, scale=1.2] {};  
  \draw (1.5,3.5) node {2};
  \draw (0.5,2.5) node {3};
  \draw (0.5,1.5) node {4};
  \draw (1.5,2.5) node {5};
    \draw (2.5,3.5) node[draw, circle, scale=1.2] {};
  \draw (2.5,3.5) node {6};
  \draw (1.5,1.5) node {7};
  \draw (2.5,2.5) node {8};
    \draw (3.5,3.5) node[draw, circle, scale=1.2] {};
  \draw (3.5,3.5) node {9};
  \draw (2.5,1.5) node {10};
    \draw (4.5,3.5) node[draw, circle, scale=1.2] {};
  \draw (4.5,3.5) node {11};
    \draw (5.5,3.5) node[draw, circle, scale=1.2] {};
  \draw (5.5,3.5) node {12};
  \draw (3.5,2.5) node {13};
  \draw (3.5,1.5) node {14};
\end{tikzpicture}
\begin{tikzpicture}[scale=0.35]
\draw (0,0)  -- (1,1) node [midway, above] {1} --
(2,0) node[midway, above] {2} --
(3,1) node[midway, above] {6} --
(4,0) node[midway, above] {9} --
(5,1) node[midway, above] {11} --
(6,0) node[midway, above] {12};
\end{tikzpicture}

Inserting the second row we begin with case 1 at inserting $a$ and changing the pair of $1,-1$ at $11,12$ into $0,0$:

\hspace*{-0.1cm}\begin{tikzpicture}[scale=0.35]
  \draw (0,4) -- (6,4);
  \draw (0,3) -- (6,3);
  \draw (0,2) -- (4,2);
  \draw (0,1) -- (4,1);
  \draw (0,4) -- (0,1);
  \draw (1,4) -- (1,1);
  \draw (2,4) -- (2,1);
  \draw (3,4) -- (3,1);
  \draw (4,4) -- (4,1);
  \draw (5,4) -- (5,3);
  \draw (6,4) -- (6,3);
  \draw (0.5,3.5) node {1};
  \draw (1.5,3.5) node {2};
  \draw (0.5,2.5) node {3};
  \draw (0.5,1.5) node {4};
  \draw (1.5,2.5) node {5};
  \draw (2.5,3.5) node {6};
  \draw (1.5,1.5) node {7};
      \draw (2.5,2.5) node[draw, circle, scale=1.2] {};
  \draw (2.5,2.5) node {8};
  \draw (3.5,3.5) node {9};
  \draw (2.5,1.5) node {10};
  \draw (4.5,3.5) node {11};
  \draw (5.5,3.5) node {12};
      \draw (3.5,2.5) node[draw, circle, scale=1.2] {};
  \draw (3.5,2.5) node {13};
  \draw (3.5,1.5) node {14};
\end{tikzpicture}
\begin{tikzpicture}[scale=0.35]
\draw (0,0)  -- (1,1) node [midway, above] {1} --
(2,0) node[midway, above] {2} --
(3,1) node[midway, above] {6};
\draw(3,1)  circle (3pt) node[above] {8};
\draw (3,1) -- (4,0) node[midway, above] {9} --
(5,1) node[midway, above] {11} --
(6,0) node[midway, above] {12};
\draw (6,0) circle (3pt) node[below] {13};
\draw (0,-0.6) node {};
\end{tikzpicture}
\begin{tikzpicture}[scale=0.35]
\draw (0,0)  -- (1,1) node [midway, above] {1} --
(2,0) node[midway, above] {2} --
(3,1) node[midway, above] {6};
\draw (3,1)--
(4,1) node[midway, above] {8} --
(5,1) node[midway, above] {9} --
(6,1) node[midway, above] {11} --
(7,1) node[midway, above] {12} --
(8,0) node[midway, above] {13};
\draw (0,-0.6) node {};
\end{tikzpicture}\\
And end with case 2 at inserting $a$:

\hspace*{-0.1cm}\begin{tikzpicture}[scale=0.35]
  \draw (0,4) -- (6,4);
  \draw (0,3) -- (6,3);
  \draw (0,2) -- (4,2);
  \draw (0,1) -- (4,1);
  \draw (0,4) -- (0,1);
  \draw (1,4) -- (1,1);
  \draw (2,4) -- (2,1);
  \draw (3,4) -- (3,1);
  \draw (4,4) -- (4,1);
  \draw (5,4) -- (5,3);
  \draw (6,4) -- (6,3);
  \draw (0.5,3.5) node {1};
  \draw (1.5,3.5) node {2};
        \draw (0.5,2.5) node[draw, circle, scale=1.2] {};
  \draw (0.5,2.5) node {3};
  \draw (0.5,1.5) node {4};
        \draw (1.5,2.5) node[draw, circle, scale=1.2] {};
  \draw (1.5,2.5) node {5};
  \draw (2.5,3.5) node {6};
  \draw (1.5,1.5) node {7};
  \draw (2.5,2.5) node {8};
  \draw (3.5,3.5) node {9};
  \draw (2.5,1.5) node {10};
  \draw (4.5,3.5) node {11};
  \draw (5.5,3.5) node {12};
  \draw (3.5,2.5) node {13};
  \draw (3.5,1.5) node {14};
\end{tikzpicture}
\begin{tikzpicture}[scale=0.35]
\draw (0,0) -- (1,1) node [midway, above] {1}
 -- (2,0) node[midway, above] {2};
\draw (2,0) circle (3pt) node[below] {3,5};
\draw (2,0) -- (3,1) node[midway, above] {6} --
(4,1) node[midway, above] {8} --
(5,1) node[midway, above] {9} --
(6,1) node[midway, above] {11} --
(7,1) node[midway, above] {12} --
(8,0) node[midway, above] {13};
\end{tikzpicture}
\begin{tikzpicture}[scale=0.35]
\draw (0,0)  -- (1,1) node [midway, above] {1};
\draw(1,1) --
(2,2) node[midway, above] {2} --
(3,1) node[midway, above] {3} --
(4,0) node[midway, above] {5};
\draw (4,0)--
(5,1) node[midway, above] {6} --
(6,1) node[midway, above] {8} --
(7,1) node[midway, above] {9} --
(8,1) node[midway, above] {11} --
(9,1) node[midway, above] {12} --
(10,0) node[midway, above] {13};
    \draw (0,-0.6) node {};
\end{tikzpicture}\\
We obtained a word, where the only $0$'s are in pairs on level one. We will see that this is always the case after any insertion that comes from the second row (see Lemma~\ref{Lemma0s}).

Inserting the third row we start with at \enquote{insert $a$ case 2}  and \enquote{separate}:

\hspace*{-0.1cm}\begin{tikzpicture}[scale=0.35]
  \draw (0,4) -- (6,4);
  \draw (0,3) -- (6,3);
  \draw (0,2) -- (4,2);
  \draw (0,1) -- (4,1);
  \draw (0,4) -- (0,1);
  \draw (1,4) -- (1,1);
  \draw (2,4) -- (2,1);
  \draw (3,4) -- (3,1);
  \draw (4,4) -- (4,1);
  \draw (5,4) -- (5,3);
  \draw (6,4) -- (6,3);
  \draw (0.5,3.5) node {1};
  \draw (1.5,3.5) node {2};
  \draw (0.5,2.5) node {3};
  \draw (0.5,1.5) node {4};
  \draw (1.5,2.5) node {5};
  \draw (2.5,3.5) node {6};
  \draw (1.5,1.5) node {7};
  \draw (2.5,2.5) node {8};
  \draw (3.5,3.5) node {9};
          \draw (2.5,1.5) node[draw, circle, scale=1.2] {};
  \draw (2.5,1.5) node {10};
  \draw (4.5,3.5) node {11};
  \draw (5.5,3.5) node {12};
  \draw (3.5,2.5) node {13};
          \draw (3.5,1.5) node[draw, circle, scale=1.2] {};
  \draw (3.5,1.5) node {14};
\end{tikzpicture}
\begin{tikzpicture}[scale=0.35]
\draw (0,0)  -- (1,1) node [midway, above] {1} --
(2,2) node[midway, above] {2} --
(3,1) node[midway, above] {3} --
(4,0) node[midway, above] {5} --
(5,1) node[midway, above] {6} --
(6,1) node[midway, above] {8} --
(7,1) node[midway, above] {9};
\draw (7,1) circle (3pt) node[below] {10};
\draw (7,1) -- (8,1) node[midway, above] {11} --
(9,1) node[midway, above] {12} --
(10,0) node[midway, above] {13};
\draw (10,0) circle (3pt) node[below] {14};
  \draw (0,-0.6) node {};
\end{tikzpicture}
\begin{tikzpicture}[scale=0.35]
\draw (0,0)  -- (1,1) node [midway, above] {1} --
(2,2) node[midway, above] {2} --
(3,1) node[midway, above] {3} --
(4,0) node[midway, above] {5} --
(5,1) node[midway, above] {6} --
(6,1) node[midway, above] {8};
\draw (6,1) --
(7,2) node[midway, above] {9} --
(8,1) node[midway, above] {10} --
(9,0) node[midway, above] {11} --
(10,1) node[midway, above] {12} --
(11,1) node[midway, above] {13} --
(12,0) node[midway, above] {14};
\end{tikzpicture}\\
We finalize with \enquote{inserting $a$ case 1}. We do not \enquote{connect} as $\tilde b$ and $b$ already connected the paths:

\hspace*{-0.1cm}\begin{tikzpicture}[scale=0.35]
  \draw (0,4) -- (6,4);
  \draw (0,3) -- (6,3);
  \draw (0,2) -- (4,2);
  \draw (0,1) -- (4,1);
  \draw (0,4) -- (0,1);
  \draw (1,4) -- (1,1);
  \draw (2,4) -- (2,1);
  \draw (3,4) -- (3,1);
  \draw (4,4) -- (4,1);
  \draw (5,4) -- (5,3);
  \draw (6,4) -- (6,3);
  \draw (0.5,3.5) node {1};
  \draw (1.5,3.5) node {2};
  \draw (0.5,2.5) node {3};
            \draw (0.5,1.5) node[draw, circle, scale=1.2] {};
  \draw (0.5,1.5) node {4};
  \draw (1.5,2.5) node {5};
  \draw (2.5,3.5) node {6};
            \draw (1.5,1.5) node[draw, circle, scale=1.2] {};
  \draw (1.5,1.5) node {7};
  \draw (2.5,2.5) node {8};
  \draw (3.5,3.5) node {9};
  \draw (2.5,1.5) node {10};
  \draw (4.5,3.5) node {11};
  \draw (5.5,3.5) node {12};
  \draw (3.5,2.5) node {13};
  \draw (3.5,1.5) node {14};
\end{tikzpicture}
\begin{tikzpicture}[scale=0.35]
\draw (0,0)  -- (1,1) node [midway, above] {1} --
(2,2) node[midway, above] {2} --
(3,1) node[midway, above] {3} ;
\draw (3,1) circle (3pt) node[below] {4};
\draw (3,1) -- (4,0) node[midway, above] {5} --
(5,1) node[midway, above] {6};
\draw (5,1) circle (3pt) node[below] {7};
\draw (5,1) -- (6,1) node[midway, above] {8} --
(7,2) node[midway, above] {9} --
(8,1) node[midway, above] {10} --
(9,0) node[midway, above] {11} --
(10,1) node[midway, above] {12} --
(11,1) node[midway, above] {13} --
(12,0) node[midway, above] {14};
\end{tikzpicture}
\begin{tikzpicture}[scale=0.35]
\draw (0,0)  -- (1,1) node [midway, above] {1} --
(2,2) node[midway, above] {2};
\draw(2,2)-- (3,2) node[midway, above] {3} --
(4,1) node[midway, above] {4} --
(5,1) node[midway, above] {5};
\draw (5,1)-- (6,2) node[midway, above] {6};
\draw (6,2) -- (7,1) node[midway, above] {7};
\draw (7,1) -- (8,1) node[midway, above] {8} --
(9,2) node[midway, above] {9} --
(10,1) node[midway, above] {10} --
(11,0) node[midway, above] {11} --
(12,1) node[midway, above] {12} --
(13,1) node[midway, above] {13} --
(14,0) node[midway, above] {14};
\end{tikzpicture}\\
We see that after each insertion from the third row we obtain a new $0$  in 3-row-position (that is either on level two or higher or the rightmost one in an odd sequence of $0$'s on level one). This is indeed always the case, see Lemma~\ref{Lemma0s}.

In the end we forget the labels. So what we obtain is the following path:

\begin{tikzpicture}[scale=0.35]
\draw (0,0)  -- (1,1) -- (2,2) -- (3,2) -- (4,1) -- (5,1) -- (6,2) -- (7,1) -- (8,1) -- (9,2) -- (10,1) -- (11,0) -- (12,1) -- (13,1) -- (14,0);
\end{tikzpicture}
\end{example}

As we will see, everything in Algorithm~\ref{algo3dim} is reversible, and Algorithm~\ref{Ualgo3dim} is its inverse. To illustrate the latter we use our running example and give a full iteration of it.
\begin{example}
We start searching for $0$ entries in 3-row-position from left to right. The $-1$ entries to the right are our first $a$ and $b$. This time we do not need to \enquote{undo connecting} as this is \enquote{done automatically}. We extract $a$ and $b$ and put them into the third row of our tableau.

\begin{tikzpicture}[scale=0.35]
\draw (0,0)  -- (1,1) node [midway, above] {1} --
(2,2) node[midway, above] {2};
\draw(2,2)-- (3,2) node[midway, above] {3} --
(4,1) node[midway, above] {4} --
(5,1) node[midway, above] {5};
\draw (5,1)-- (6,2) node[midway, above] {6};
\draw (6,2) -- (7,1) node[midway, above] {7};
\draw (7,1) -- (8,1) node[midway, above] {8} --
(9,2) node[midway, above] {9} --
(10,1) node[midway, above] {10} --
(11,0) node[midway, above] {11} --
(12,1) node[midway, above] {12} --
(13,1) node[midway, above] {13} --
(14,0) node[midway, above] {14};
  \draw (2.5,2.65) circle (13pt) node{};
  \draw (3.1,1.9) node[below]{$a$};
  \draw (4.5,1.65) circle (13pt) node{};
  \draw (6.2,2) node[below]{$b$};
\end{tikzpicture}
\begin{tikzpicture}[scale=0.35]
\draw (0,0)  -- (1,1) node [midway, above] {1} --
(2,2) node[midway, above] {2} --
(3,1) node[midway, above] {3} ;
\draw (3,1) -- (4,0) node[midway, above] {5} --
(5,1) node[midway, above] {6};
\draw (5,1) -- (6,1) node[midway, above] {8} --
(7,2) node[midway, above] {9} --
(8,1) node[midway, above] {10} --
(9,0) node[midway, above] {11} --
(10,1) node[midway, above] {12} --
(11,1) node[midway, above] {13} --
(12,0) node[midway, above] {14};
\end{tikzpicture}
\begin{tikzpicture}[scale=0.35]
  \draw (0,4) -- (2,4);
  \draw (0,3) -- (2,3);
  \draw (0,2) -- (2,2);
  \draw (0,1) -- (2,1);
  \draw (0,4) -- (0,1);
  \draw (1,4) -- (1,1);
  \draw (2,4) -- (2,1);
  \draw (0.5,1.5) node {4};
  \draw (1.5,1.5) node {7};
\end{tikzpicture}\\
We do the same again. However, this time we change the $1$ right of the first $0$ into a $0$ and we find a separating point between $a$ and $b$ so we \enquote{undo separating}:

\begin{tikzpicture}[scale=0.35]
\draw (0,0)  -- (1,1) node [midway, above] {1} --
(2,2) node[midway, above] {2} --
(3,1) node[midway, above] {3} --
(4,0) node[midway, above] {5} --
(5,1) node[midway, above] {6} --
(6,1) node[midway, above] {8};
\draw (6,1) --
(7,2) node[midway, above] {9} --
(8,1) node[midway, above] {10} --
(9,0) node[midway, above] {11} --
(10,1) node[midway, above] {12} --
(11,1) node[midway, above] {13} --
(12,0) node[midway, above] {14};
  \draw (0,-0.2) node {};
  \draw (5.5,1.65) circle (13pt) node{};
  \draw (7.1,1.9) node[below]{$a$};
  \draw (10.5,1.65) circle (13pt) node{};
  \draw (11.2,1) node[below]{$b$};
  \draw (9,0) circle (8pt) node{};
\end{tikzpicture}
\begin{tikzpicture}[scale=0.35]
\draw (0,0)  -- (1,1) node [midway, above] {1};
\draw(1,1) --
(2,2) node[midway, above] {2} --
(3,1) node[midway, above] {3} --
(4,0) node[midway, above] {5};
\draw (4,0)--
(5,1) node[midway, above] {6} --
(6,1) node[midway, above] {8} --
(7,1) node[midway, above] {9} --
(8,1) node[midway, above] {11} --
(9,1) node[midway, above] {12} --
(10,0) node[midway, above] {13};
    \draw (0,-0.2) node {};
\end{tikzpicture}
\begin{tikzpicture}[scale=0.35]
  \draw (0,4) -- (4,4);
  \draw (0,3) -- (4,3);
  \draw (0,2) -- (4,2);
  \draw (0,1) -- (4,1);
  \draw (0,4) -- (0,1);
  \draw (1,4) -- (1,1);
  \draw (2,4) -- (2,1);
  \draw (3,4) -- (3,1);
  \draw (4,4) -- (4,1);
  \draw (0.5,1.5) node {4};
  \draw (1.5,1.5) node {7};
  \draw (2.5,1.5) node {10};
  \draw (3.5,1.5) node {14};
  \draw (0,0.7) node {};
\end{tikzpicture}\\
Now we do not find $0$'s in 3-row-position anymore. Thus we proceed with extracting the second row. Therefore we search for $-1$'s or $0$'s:

\begin{tikzpicture}[scale=0.35]
\draw (0,0)  -- (1,1) node [midway, above] {1};
\draw(1,1) --
(2,2) node[midway, above] {2} --
(3,1) node[midway, above] {3} --
(4,0) node[midway, above] {5};
\draw (4,0)--
(5,1) node[midway, above] {6} --
(6,1) node[midway, above] {8} --
(7,1) node[midway, above] {9} --
(8,1) node[midway, above] {11} --
(9,1) node[midway, above] {12} --
(10,0) node[midway, above] {13};
  \draw (2.1,1.9) node[below]{$a$};
  \draw (3.2,1) node[below]{$b$};
\end{tikzpicture}
\begin{tikzpicture}[scale=0.35]
\draw (0,0) -- (1,1) node [midway, above] {1}
 -- (2,0) node[midway, above] {2};
\draw (2,0) -- (3,1) node[midway, above] {6} --
(4,1) node[midway, above] {8} --
(5,1) node[midway, above] {9} --
(6,1) node[midway, above] {11} --
(7,1) node[midway, above] {12} --
(8,0) node[midway, above] {13};
\end{tikzpicture}
\begin{tikzpicture}[scale=0.35]
  \draw (0,4) -- (4,4);
  \draw (0,3) -- (4,3);
  \draw (0,2) -- (4,2);
  \draw (0,1) -- (4,1);
  \draw (0,4) -- (0,1);
  \draw (1,4) -- (1,1);
  \draw (2,4) -- (2,1);
  \draw (3,4) -- (3,1);
  \draw (4,4) -- (4,1);
  \draw (0.5,2.5) node {3};
  \draw (0.5,1.5) node {4};
  \draw (1.5,2.5) node {5};
  \draw (1.5,1.5) node {7};
  \draw (2.5,1.5) node {10};
  \draw (3.5,1.5) node {14};
  \draw (0,1.2) node {};
\end{tikzpicture}\\
We do it again, this time the other case occurs. Moreover, we change the $0,0$ at $11,12$ into $1,-1$:

\begin{tikzpicture}[scale=0.35]
\draw (0,0)  -- (1,1) node [midway, above] {1} --
(2,0) node[midway, above] {2} --
(3,1) node[midway, above] {6};
\draw (3,1)--
(4,1) node[midway, above] {8} --
(5,1) node[midway, above] {9} --
(6,1) node[midway, above] {11} --
(7,1) node[midway, above] {12} --
(8,0) node[midway, above] {13};
  \draw (3.5,1) node[below]{$a$};
  \draw (7.2,1) node[below]{$b$};
\end{tikzpicture}
\begin{tikzpicture}[scale=0.35]
\draw (0,0)  -- (1,1) node [midway, above] {1} --
(2,0) node[midway, above] {2} --
(3,1) node[midway, above] {6};
\draw (3,1) -- (4,0) node[midway, above] {9} --
(5,1) node[midway, above] {11} --
(6,0) node[midway, above] {12};
\end{tikzpicture}
\begin{tikzpicture}[scale=0.35]
  \draw (0,4) -- (4,4);
  \draw (0,3) -- (4,3);
  \draw (0,2) -- (4,2);
  \draw (0,1) -- (4,1);
  \draw (0,4) -- (0,1);
  \draw (1,4) -- (1,1);
  \draw (2,4) -- (2,1);
  \draw (3,4) -- (3,1);
  \draw (4,4) -- (4,1);
  \draw (0.5,2.5) node {3};
  \draw (0.5,1.5) node {4};
  \draw (1.5,2.5) node {5};
  \draw (1.5,1.5) node {7};
  \draw (2.5,2.5) node {8};
  \draw (2.5,1.5) node {10};
  \draw (3.5,2.5) node {13};
  \draw (3.5,1.5) node {14};
  \draw (0,1.2) node {};
\end{tikzpicture}\\
To finish with Algorithm~\ref{Ualgo3dim} we fill the first row with the remaining numbers: 
\[\begin{tikzpicture}[scale=0.35]
  \draw (0,4) -- (6,4);
  \draw (0,3) -- (6,3);
  \draw (0,2) -- (4,2);
  \draw (0,1) -- (4,1);
  \draw (0,4) -- (0,1);
  \draw (1,4) -- (1,1);
  \draw (2,4) -- (2,1);
  \draw (3,4) -- (3,1);
  \draw (4,4) -- (4,1);
  \draw (5,4) -- (5,3);
  \draw (6,4) -- (6,3);
  \draw (0.5,3.5) node {1};
  \draw (1.5,3.5) node {2};
  \draw (0.5,2.5) node {3};
  \draw (0.5,1.5) node {4};
  \draw (1.5,2.5) node {5};
  \draw (2.5,3.5) node {6};
  \draw (1.5,1.5) node {7};
  \draw (2.5,2.5) node {8};
  \draw (3.5,3.5) node {9};
  \draw (2.5,1.5) node {10};
  \draw (4.5,3.5) node {11};
  \draw (5.5,3.5) node {12};
  \draw (3.5,2.5) node {13};
  \draw (3.5,1.5) node {14};
\end{tikzpicture}\]
We see that we obtained our original standard Young tableau.
\end{example}

We proceed by formulating and proving properties of Algorithm~\ref{algo3dim} and~\ref{Ualgo3dim}.

\begin{remark}
A (partial) standard Young tableau filled with elements of a totally ordered set produces the same vacillating tableau in Algorithm~\ref{algo3dim} as the standard Young tableau filled by $1,2,\dots,r$ increasingly according to this totally ordered set.
Thus the crucial information in the standard Young tableau is the relative order of the entries.

From this point of view, one can stop after any insertion of a pair $a,b$ and obtains the vacillating tableau corresponding to the tableau only consisting of already inserted entries.
\end{remark}

\begin{lemma}
Algorithms~\ref{algo3dim} and~\ref{Ualgo3dim} are well-defined.
\end{lemma}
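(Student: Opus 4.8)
The plan is to verify, for each algorithm, three things: (i) every instruction is executable on the data present when it is reached — in particular, whenever we are told to locate ``the next $-1$ to the left/right'', ``the leftmost $1$ on level $1$'', ``the position to the right of $a$'', etc., such a position exists and has the stated property; (ii) the case distinctions (the nested \texttt{if}/\texttt{else if} blocks, and the \emph{connect}/\emph{separate}/\emph{insert $a$}/\emph{case 1}/\emph{case 2} branches of the inner loops) are exhaustive and mutually exclusive for the inputs that actually occur; and (iii) each loop terminates, and on termination the returned object has the type claimed in the \texttt{Output} line. I would organize everything around a single loop invariant, maintained inductively after each pair-insertion (resp.\ pair-extraction): the word $V$ is a legal labeled vacillating tableau of shape $\emptyset$ — a lattice path with $\pm1$ and $0$ steps from height $0$ to height $0$, weakly above the $x$-axis, with no $0$ step on the $x$-axis, with labels strictly increasing from left to right — whose length equals the number of cells of $Q$ processed so far; moreover after the second-row phase the only $0$'s occur in pairs on level $1$, and during the third-row phase the only $0$'s not in such pairs are the $0$'s in $3$-row-position produced by already-completed insertions. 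These last structural claims overlap with Lemmas~\ref{Lemma0s} and~\ref{Lem10-1} proved below, so I would either cite them or prove the minimal versions needed here in tandem.

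Granting the invariant, each single step of Algorithm~\ref{algo3dim} reduces to a short check. Inserting $b$ as a $-1$ next to the current path is always legal. For the second row, the position to the right of $a$ is either a $-1$ (case 1) or a $1$ (case 2), because by the invariant there is no $0$ on the $x$-axis and the segment being modified lies on level $0$; in case 2 a $-1$ to the left exists to be turned into a $1$ since the path must have ascended from height $0$. For the third row the leftward \texttt{while} loop scans from $b$ toward $a$, and at each position exactly one branch applies: \emph{connect} and \emph{separate} are mutually exclusive by their level-and-parity guards, and the loop must reach the insertion spot of $a$ (hence terminates and sets $\tilde a$) because $\tilde b$ lies to the left of $b$ and the label $a$ satisfies $a<b$, so the scan cannot run off the left end first. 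Running the same analysis backwards gives well-definedness of Algorithm~\ref{Ualgo3dim}: in the third-row phase, whenever the outer guard fires there is a leftmost $0$ in $3$-row-position, the $-1$ to its right to be deleted is present, and the inner rightward scan terminates at a $0$ in $3$-row-position that becomes $\tilde b$; in the second-row phase, as long as $V\neq(1,-1,\dots,1,-1)$ there is a leftmost position that is neither a $1$ nor on level $0$, it is a $-1$ or a $0$ (never a $1$, by the selection rule), and the partner $-1$ to delete and the $0,0$ pair on level $0$ to restore exist by the invariant. The first-row phase in both directions is immediate.

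The main obstacle is precisely the third-row phase, and in particular the inner \texttt{while} loop: one must confirm that the guard ``$\tilde a$ undefined'' (resp.\ ``$\tilde b$ undefined'') is eventually falsified, that the \emph{connect}/\emph{separate}/\emph{insert}/\emph{case 1}/\emph{case 2} branches never conflict and never all fail at a position where an action is forced, and that the parity clause ``the number of positions strictly to the left is even'' in \emph{separate} always points at a genuine $0,0$ pair on level $1$ supplied by the invariant. This needs careful bookkeeping of how the current level changes as the scan crosses a step, which is exactly what the detailed lemmas following this one carry out; for the purpose of well-definedness I would extract only the minimal structural facts needed for executability and termination, and defer the sharper statements. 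Once this is in place the output types are automatic: Algorithm~\ref{algo3dim} returns a shape-$\emptyset$ word of length $|\lambda|$, which is even because every row of $Q$ is even; and Algorithm~\ref{Ualgo3dim} reinserts entries two at a time into at most three rows, so every row has even length and there are at most three of them.
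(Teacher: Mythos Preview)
Your loop-invariant plan is a reasonable framework, but the concrete justifications you offer at the critical points are either wrong or missing, and the paper's argument is both different and much shorter. For row~2, case~2, you say a $-1$ to the left of $a$ exists ``since the path must have ascended from height~$0$''; ascending is a $+1$ step, so this produces a $1$, not the $-1$ you need. For row~3 you claim that once the leftward scan reaches the insertion spot of $a$ it ``hence terminates and sets $\tilde a$'', but inserting $a$ does not set $\tilde a$; the loop only terminates when case~1 or case~2 subsequently fires, and you give no reason why one of them must. For Algorithm~\ref{Ualgo3dim} you assert without argument that the rightward scan finds a $0$ in $3$-row-position to serve as $\tilde b$.

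The paper dispatches all of this in a few lines by looking at the standard Young tableau rather than the word. For Algorithm~\ref{algo3dim} it names $c,d$ the entries of $Q$ sitting directly above $a,b$; these satisfy $c<a$ and $d<b$, were inserted in the previous row's phase (as $1,-1$ for the row~2 step; as $-1$ or $0$, and $-1$, for the row~3 step), and --- since pairs are processed from right to left --- have not yet been modified when $(a,b)$ is treated. They therefore supply the needed $-1$ to the left of $a$ in row~2 case~2, and guarantee targets for $\tilde a,\tilde b$ in row~3. For Algorithm~\ref{Ualgo3dim} the paper gives a parity count: a shape-$\emptyset$ vacillating tableau of even length has an even number of $0$'s, and after the $\tilde a$/$a$ step that count becomes odd and remains odd through the scan, forcing a $0$ in $3$-row-position to remain for $\tilde b$. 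Your broader invariant, even if carried through, would bundle well-definedness with the output-type lemmas that the paper deliberately proves separately.
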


\begin{proof}
To show that Algorithm~\ref{algo3dim} is well-defined we need to show that a $-1$ exists left of $a$ when inserting the second row in case 2 and that $\tilde{b}$ and $\tilde{a}$ exist when inserting the third row.

For inserting the second row consider the numbers directly above $a$ and $b$ in the tableau; call them $c$ and $d$. We will see inductively that $a$ and $b$ may change the positions labeled with $c$ and $d$ but nothing to the left, thus nothing with smaller labels. There are two cases: $c<a<d<b$ (this corresponds to \enquote{insert $a$ case 1}) and $c<d<a<b$ (this can happen in both cases). In both cases we know that $c$ and $d$ were not changed before. So they are still $1,-1$ and can be used.

Analogously, for inserting the third row we consider the numbers directly above $a$ and $b$. The entry above $b$ has been inserted as $-1$. The entry above $a$ either as $0$ or as $-1$.

To show that Algorithm~\ref{Ualgo3dim} is well-defined we need to show that $\tilde{b}$ exists in extracting the third row provided that there is a $0$ in 3-row-position. This holds because after deleting $a$ and changing $\tilde{a}$ there are an odd number of $0$'s left as the vacillating tableau needs to have even length.
\end{proof}

Next we show that our algorithms produce the desired output.

\begin{lemma}
Algorithm~\ref{algo3dim} produces a vacillating tableau of even length and shape $\emptyset$.
\end{lemma}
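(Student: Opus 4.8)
The plan is to induct on the number of pairs $a,b$ that have already been processed, carrying the invariant that after every pair insertion the current word $V$ is a \emph{Riordan path of shape $\emptyset$} — a lattice path with NE, E, SE steps that stays weakly above the $x$-axis, has no E step on the $x$-axis, and ends at height $0$ — and that its length is even. The base case is the word $V=(1,-1,1,-1,\dots,1,-1)$ produced from the first row of $Q$: its length equals that of the first row, which is even by hypothesis; it only ever visits heights $0$ and $1$, so it never drops below the axis and contains no E step at all; and it ends at height $0$.

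For the length I would simply count insertions. In the second-row block each pair $a,b$ contributes exactly one inserted $-1$ at $b$ and one inserted letter (a $0$ or a $-1$) at $a$, while all remaining operations (changing a $-1$ into $0$ or into $1$, changing pairs $1,-1$ into $0,0$) only relabel existing positions. The third-row block behaves the same way: a $-1$ is inserted at $b$, a $-1$ is inserted at $a$, and the \enquote{connect}, \enquote{separate} and \enquote{case 1/case 2} operations change existing entries only. Hence each pair adds exactly two positions, and since the second and third rows of $Q$ are also of even length, the output has even length $\lambda_1+\lambda_2+\lambda_3=|\lambda|$.

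For the shape I would track the sum of all entries of $V$, which is precisely the terminal height of the path. It is $0$ for the initial word, and each pair insertion leaves it unchanged: inserting $b$ contributes $-1$; in every branch the $a$-step together with its compensating relabel contributes $+1$ (insert $0$ and raise a $-1$ to $0$; or insert $-1$ and raise a $-1$ to $1$; and likewise in the third-row loop, where the inserted $-1$ at $a$ is paired with \enquote{case 1} raising a $-1$ to $0$ or \enquote{case 2} raising a $0$ to $1$); and the \enquote{connect}, \enquote{separate} and \enquote{$1,-1\mapsto 0,0$} relabels preserve sums. So the terminal height stays $0$.

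The substantive part — and the main obstacle — is showing that $V$ stays a genuine Riordan path: that the partial sums never go negative and that no $0$ ever sits on level $0$. Here I would install the finer structural invariant highlighted in the running example, namely Lemma~\ref{Lemma0s}: after the first and second rows every $0$ of $V$ belongs to a pair of consecutive $0$'s on level $1$, and each third-row insertion creates new $0$'s only in \enquote{3-row-position} (on level $\ge 2$, or as the right end of an odd run of $0$'s on level $1$). In either description no $0$ sits on level $0$, which rules out E steps on the axis. For nonnegativity the analysis localizes to the pair $a,b$: after the compensating relabel at $\tilde a$, inserting the $-1$ at $a$ only raises the heights of the positions in $[\tilde a, a)$ by one and leaves all other heights unchanged, so it cannot create a negative value; \enquote{connect} turns a valley $-1,1$ at level $0$ into $0,0$ at level $1$, strictly raising the local minimum; and \enquote{separate} is triggered only when the number of positions strictly to the left is even and the two $0$'s sit on level $1$, so the valley it creates reaches exactly level $0$ and never below. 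I expect this verification to run through the same kind of case distinction (on the local shape of the path around $a$ and $b$, and on the parities of the heights involved) used throughout the section, and to be by far the longest piece of the argument; the fact that the while-loop terminates with $\tilde a$ and $\tilde b$ defined has already been recorded in the preceding well-definedness lemma.
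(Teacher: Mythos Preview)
Your overall plan — induct on the pairs $(a,b)$, verify the sum stays zero (shape $\emptyset$), count insertions for even length, and then check the Riordan condition — is exactly the paper's skeleton, and your treatment of length and shape is fine. The divergence is in how the Riordan condition is handled.

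You propose to carry the full structural invariant of Lemma~\ref{Lemma0s} (where the $0$'s live) and then run a case analysis on $a/\tilde a$, connect, and separate; you correctly anticipate this will be the bulk of the work. The paper bypasses almost all of that with a single observation: among all the operations performed in processing one pair $(a,b)$, the only ones that can \emph{lower} the level of any position are (i) the second-row conversion of $(1,-1)$ pairs into $(0,0)$ between $a$ and $b$, and (ii) ``separate''. For (i), those positions sit in the interval where the $a$- and $b$-insertions have already raised the level by one, so the resulting $0$'s land on level $\ge 1$; for (ii), the $(0,0)$ being separated is on level $1$ by hypothesis, so the resulting $(-1,1)$ reaches level $0$ and no lower. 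That is the whole argument — no appeal to Lemma~\ref{Lemma0s}, no per-case height bookkeeping.

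Two small gaps in your sketch that this viewpoint also absorbs: you do not treat the $b/\tilde b$ insertion (their combined effect raises levels on $[\tilde b,b)$ and leaves the rest unchanged, so it cannot hurt), and you do not mention the second-row $(1,-1)\to(0,0)$ step at all, which is precisely one of the two level-lowering operations and needs the ``already raised by one'' remark. Your localisation ``$[\tilde a,a)$'' also silently assumes the third-row pattern; in second-row case~1 the compensating change is to the \emph{right} of $a$, though the conclusion (no level drops) is the same. None of this is fatal, but the paper's formulation makes these points automatic rather than items on a checklist.
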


\begin{proof}
We show this inductively. After inserting the first row we obtain $(1,-1,1,-1,\dots,1,-1)$ which is a vacillating tableau. Moreover, after every insertion of a pair $(a,b)$, the sum over all entries is zero. Thus it remains to show that the algorithm neither produces any $0$ at level zero or lower nor any $\pm 1$ at level minus one or lower. We show that this is the case after every insertion of a pair $(a,b)$.

The only steps in the algorithm at which the level of a position decreases occur when pairs $(1,-1)$ between $a$ and $b$ are changed into $(0,0)$, or when \enquote{separating}. In the first situation, the level is increased between $a$ and $b$, so this does not produce a $0$ on level zero. In the second situation, pairs $(0,0)$ at level one are changed into $(-1,1)$ at level zero.
\end{proof}

\begin{lemma}
Algorithm~\ref{Ualgo3dim} produces a standard Young tableau with at most three rows, all of even length.
\end{lemma}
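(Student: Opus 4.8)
\emph{Strategy.} I would prove the three assertions in turn: $Q$ uses at most three rows, every row of $Q$ has even length, and $Q$ is standardly filled. The first two are bookkeeping about the three extraction phases; the standardness is the substantive part, and I would handle it by a loop invariant that mirrors, run in reverse, the case analysis used for Algorithm~\ref{algo3dim}.

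\emph{Counting the rows.} First I would check that each pass of the third-row while-loop removes exactly two labelled positions from $V$: after locating the leftmost $0$ in 3-row-position and performing the prescribed recolouring (case~1 or case~2) one obtains a position $\tilde a$, then the next $-1$ to its right, the position $a$, exists by well-definedness (established above), is distinct from $\tilde a$, and is deleted; the inner loop walks rightward, eventually produces $\tilde b$ (again by well-definedness), and the next $-1$ to its right, the position $b$, is deleted. Since $b$ lies strictly to the right of $a$ and the labels of $V$ increase strictly from left to right, $a<b$ as integers, and the pair $(a,b)$ goes into the third row. The same counting applies to the second-row loop (with $a$ the leftmost position that is neither a $1$ nor on level $0$, and $b$ the next $-1$ to the right of the new $-1$). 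Each iteration of either loop strictly shortens $V$, so both loops terminate; the second loop exits precisely when $V=(1,-1,1,-1,\dots,1,-1)$, which has even length, and the surviving labels --- sorted, since labels in $V$ are always kept in order --- fill the first row. Hence rows $1,2,3$ receive $2k_1,2k_2,2k_3$ entries with $2k_1+2k_2+2k_3=r$, only three rows are used, and each has even (possibly zero) length. Labels are never created, destroyed, or renamed, so the entries of $Q$ are exactly $1,\dots,r$.

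\emph{Standardness.} I would then prove, by induction on the number of pairs already extracted, the invariant that the pairs deposited into the third row so far, read in the reverse of the order in which they were extracted, form a strictly increasing block occupying columns $1,\dots,2j$ for some $j$, and that the first $2j$ columns of $Q$ (over the rows built so far) are strictly increasing downward; and likewise for the second row. The point is that the leftmost $0$ in 3-row-position corresponds, under the reversal of Algorithm~\ref{algo3dim}, to the most recently inserted third-row pair, so ``insert $a,b$ into the third row'' is a prepend; one must then check that $a<b$ are smaller than every entry already in the third row and that, placed above the existing entries of rows $1$ and $2$ in their two columns, they preserve column-strictness. The first-row entries are automatically increasing, and strictness between rows $1$ and $2$ is carried by the second-row part of the invariant.

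\emph{Main obstacle.} I expect the real effort to be the column-strictness part: showing that the label landing in row $3$ (resp.\ row $2$) in a given column exceeds the label in row $2$ (resp.\ row $1$) there, and that a freshly extracted third-row pair genuinely lands on columns already covered by rows $1$ and $2$ (so that the shape stays a partition). I would derive this from a finer invariant linking the level at which a $0$ sits in $V$ to the column its pair will occupy --- a $0$ in 3-row-position sitting above level $1$ forces its two partner $-1$'s far enough to the right --- and verify that each branch of the inner loop (``undo separate'', ``undo connect'', the two cases for extracting $a$, and the extraction of $b$) preserves it; this is the backwards image of the analysis behind Algorithm~\ref{algo3dim}. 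I would deliberately not shortcut the argument through the (later) fact that Algorithms~\ref{algo3dim} and~\ref{Ualgo3dim} are mutually inverse, so as to keep the logical order of the paper intact.
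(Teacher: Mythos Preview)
Your bookkeeping for ``at most three rows'' and ``all rows of even length'' is fine and matches the paper. The standardness argument, however, has a sign error and a structural gap.

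\textbf{Direction of extraction.} You correctly observe that the leftmost $0$ in 3-row-position corresponds to the \emph{last} pair inserted by Algorithm~\ref{algo3dim}, i.e.\ the \emph{leftmost} pair of the third row. But this means the first pair extracted by Algorithm~\ref{Ualgo3dim} is the leftmost third-row pair, hence has the \emph{smallest} labels; subsequent extractions produce pairs further to the right with \emph{larger} labels. So ``insert $a,b$ into the third row'' is an append, not a prepend, and your invariant should read ``larger than every entry already in the third row'', not smaller. (Check the running example: the first third-row extraction gives $(4,7)$, the second $(10,14)$.)

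\textbf{The temporal problem and how the paper avoids it.} Your column-strictness invariant cannot be maintained in the form you propose, because during the third-row loop rows~1 and~2 are empty; the comparison you want to make is between a third-row pair and the second-row pair that will eventually sit above it, and that pair has not been extracted yet. Your ``finer invariant linking the level at which a $0$ sits in $V$ to the column its pair will occupy'' gestures at a fix but is not concrete. The paper takes a different route that sidesteps this issue entirely: for every pair $(a,b)$ extracted into row~$i$ (for $i=3,2$), it \emph{explicitly names}, in terms of positions currently visible in $V$, a pair $(a',b')$ with $a'<a$ and $b'<b$ that will later be extracted into row~$i-1$ and land in columns weakly to the right of $(a,b)$. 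For the second row, $b'$ is the $-1$ created just before deleting $a$ and $a'$ is the $1$ immediately to its left; for the third row, $a'$ and $b'$ are defined casewise from $\tilde a$ and $\tilde b$. This single device simultaneously gives the partition shape (row~$i$ is no longer than row~$i-1$) and column-strictness, with no induction over extraction order needed. If you try to make your level-based invariant precise you will most likely reinvent exactly this partner-pair identification, so it is worth adopting it directly.
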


\begin{proof}
The row lengths are even because elements are extracted in pairs. It remains to show that the row lengths are weakly decreasing and the tableau is standard.

Suppose we extract a pair $(a,b)$ into the third (respectively second) row. We define two related elements $a'<b'$ with $a'<a$ and $b'<b$ which will be extracted into the second (respectively first) row and end up weakly right of $a$ and $b$.

We consider the second row first. We define $b'$ to be the $-1$ created just before deleting $a$. The element $a'$ is the $1$ directly left of $b'$. Thus $(a',b')$ is a pair $(1,-1)$ on level zero, such that everything to the left is also on level zero. Therefore it is extracted into the first row.

Now we consider the third row. Suppose $\tilde{a}$ is $-1$, then we define $a'$ to be $\tilde{a}$. Otherwise, if $\tilde{a}$ is $0$, we distinguish two cases. If there is no $-1$ extracted into the second row left of $\tilde{a}$ or the next $-1$ to the left is extracted into the second row as $b$ then $a'$ is the leftmost $0$ in the sequence of $0$'s of $\tilde{a}$. Otherwise it is the next $-1$ to the left. Similarly, if $\tilde{b}$ is a $-1$ on level one or higher, we define $b'$ to be $\tilde{b}$. Otherwise, if it is a $-1$ on level zero, we distinguish two cases. If there is a $-1$ directly to the left of $b$, that is extracted as $b$ into the third row, we define $b'$ to be this $-1$ and otherwise we define $b$ to be $\tilde{b}$.

Finally we show that $a'$ and $b'$ get extracted into the second row.
Therefore we describe the elements that are extracted into the second row. $-1$'s on level one or higher are always such elements. $-1$'s on level zero are extracted to the second row if and only if there is a $-1$ to the left that is extracted as $a$ or there is a $0$ directly to the left. $0$'s are extracted if and only if they are the leftmost in their sequence of $0$'s and the $-1$ to the left (if it exists) is extracted as $b$.
\end{proof}

The following lemma is one of the key observations to show that Algorithms~\ref{algo3dim} and~\ref{Ualgo3dim} are inverse.

\begin{lemma}
\label{Lemma0s}
In Algorithm~\ref{algo3dim}, after inserting a pair $(a,b)$ from the second row, any $0$'s come in pairs on level one. Any third row insertion produces $0$'s in 3-row-position.
\end{lemma}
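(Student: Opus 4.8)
The plan is to prove both assertions at once by induction on the number of pairs that have already been inserted, carrying along an invariant strong enough to control the levels of all positions. For the second row the invariant to maintain is precisely the first assertion, ``every $0$ of the current word lies in a consecutive pair on level $1$'', but to make the induction go through one also needs to remember how the $\pm1$-\emph{skeleton} (the word with all level-$1$ pairs of $0$'s deleted) sits relative to the cells $c,d$ of the first row lying above the pair currently being inserted; I would phrase this auxiliary information as: the skeleton is a non-negative $\pm1$-path, and to the left of (and including) $c$ it still coincides with the untouched alternating word $1,-1,1,-1,\dots$. This last point is exactly what the proof of well-definedness of Algorithm~\ref{algo3dim} already establishes, so it is available for free.

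\textbf{Second row.} The base case is trivial: before any second-row pair is inserted the word is $(1,-1,\dots,1,-1)$, which contains no $0$ at all. For the inductive step, insert the next pair $(a,b)$ and split into the two cases of the algorithm. In \enquote{insert $a$ case~1} ($c<a<d<b$) the newly inserted $0$ at $a$ together with the $-1$ at $d$ turned into a $0$ form a pair; since $c$ is a $1$ carrying the path from level $0$ to level $1$, this pair sits on level $1$. In \enquote{insert $a$ case~2} ($c<d<a<b$) inserting the $-1$ at $a$ and raising the nearest $-1$ on its left to a $1$ lifts the stretch from that $-1$ up to $b$ by two, and one checks from the auxiliary hypothesis that no level-$1$ pair of $0$'s can lie in this stretch (such a pair would force a $-1$ strictly between $d$ and $a$, contradicting the choice of the raised $-1$), so nothing is displaced to a wrong level. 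In either case the portion strictly between $a$ and $b$ ends up on level $1$ and, by the partner property of $b$, consists only of pairs $(1,-1)$ besides the already-present level-$1$ pairs of $0$'s; hence the final step \enquote{change pairs of $1,-1$ between $a$ and $b$ into $0,0$} replaces level-$1$ pairs by level-$1$ pairs and the invariant is restored. One must also verify that the skeleton and its coincidence with the alternating word to the left of the new $c$ are preserved, which is immediate since none of these operations touches anything to the left of $c$.

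\textbf{Third row.} Now start from a word $V$ whose $0$'s are all in consecutive pairs on level $1$ and insert a pair $(a,b)$ from the third row. The only operations creating $0$'s are: \enquote{change the next $-1$ left of $b$ into $0$} (producing $\tilde b$), each \enquote{connect}, and \enquote{insert $a$ case~1} (producing $\tilde a$). I would show $\tilde b$ is in $3$-row-position by cases on the level $\ell_0$ of the modified $-1$: if $\ell_0\ge1$ the resulting $0$ sits on level $\ell_0+1\ge2$; if $\ell_0=0$ then $\tilde b$ sits on level $1$ and, because to its right it adjoins an even run of level-$1$ zeros (through the freshly inserted $-1$ at $b$, or directly), it becomes the rightmost element of an \emph{odd} run on level $1$. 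The \enquote{connect}/\enquote{separate} moves are inverse to each other and \enquote{separate} is applied exactly when the running parity (the number of positions strictly to the left) forces it; tracking this parity — which is the very condition tested in the \enquote{separate} branch of Algorithm~\ref{algo3dim} — shows that every run of level-$1$ zeros remains odd, and the same parity count handles the run containing $\tilde a$ in case~1 (or places it on level $\ge2$). Thus each $0$ produced by a third-row insertion is in $3$-row-position.

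\textbf{Main obstacle.} The delicate point is that the bare invariant ``$0$'s come in level-$1$ pairs'' is not inductively self-sufficient: inserting $a$ in case~2 of the second row, and the \enquote{connect}/\enquote{separate} moves in the third row, both shift the levels of entire sub-words, so one genuinely needs the strengthened hypothesis to guarantee that no level-$1$ pair lies in a region about to be shifted and that the skeleton keeps its expected shape. The real work of the proof is therefore to fix this strengthened hypothesis precisely and to check, one operation at a time, that it survives every branch of Algorithm~\ref{algo3dim}; once that bookkeeping is done, both statements of the lemma follow immediately.
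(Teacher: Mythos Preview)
Your inductive plan and your diagnosis of the ``main obstacle'' are on target, and the third-row parity argument is essentially what the paper does.  The second-row argument, however, has two concrete gaps.

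First, your case split is wrong.  ``Insert $a$ case~1'' is the branch taken whenever the position immediately to the right of $a$ is a $-1$ not labelled $b$; this can happen in the configuration $c<d<a<b$ just as well as in $c<a<d<b$ (the well-definedness proof in the paper says exactly this: ``$c<d<a<b$ can happen in both cases'').  So in case~1 the $-1$ that is turned into a $0$ need not be $d$, and your level computation for that new pair of $0$'s (``$c$ is a $1$ carrying the path from level~$0$ to level~$1$'') does not apply.

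Second, in case~2 your reason for ``no level-$1$ pair of $0$'s lies in the lifted stretch'' is not valid.  A level-$1$ pair of $0$'s does not by itself force any $-1$ to its left; nothing in your invariant rules out such a pair sitting between the raised $-1$ and $a$.  What actually saves you---and this is essentially the entire content of the paper's second-row argument---is the right-to-left insertion order: every $0$ created so far lies strictly to the right of the \emph{previous} $a$, hence strictly to the right of the current $b$, so the current insertion simply cannot touch any existing $0$.  With this observation there is nothing to check about hypothetical $0$'s in the lifted region, the ``skeleton'' invariant becomes unnecessary, and one only has to verify that the newly created $0$'s come in pairs (immediate) and sit on level~$1$ (a one-line height count).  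The paper's third-row argument likewise leans on this right-to-left localisation before carrying out the parity bookkeeping that you sketch.
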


\begin{proof}
We consider inserting the second row first. There are two possible ways to produce $0$'s when inserting the second row: at \enquote{inserting $a$ case 1}, and changing pairs $1,-1$ into $0,0$. Both produce them in pairs. Moreover, those are always right of the current $a$. As the next insertion happens strictly left of the next $b$, that is itself left of $a$, they cannot be isolated later by inserting the second row.

Now we consider inserting the third row.
Elements are inserted decreasingly, from right to left. Therefore left of the leftmost $\tilde{a}$ so far there are no $0$'s in 3-row-position. Moreover, \enquote{separate} and \enquote{connect} only changes or produces pairs such that an even number of positions and thus an even number of $0$'s is to the left. Now we consider $\tilde{a}$ and $\tilde{b}$ of a pair to insert. If those are on level two or higher, they are $0$s in 3-row-position. If $\tilde{b}$ is a $0$ on level one, it needs to be between $a$ and $b$. If $a$ is the $-1$ to end the sequence of $0$s of $b$ it could be the case that this sequence is even. In this case, the leftmost $0$ of this sequence is thus at level two. Moreover $\tilde{a}$ is either another $0$ on level two or a $1$ next to an odd number of $0$s on level one, as it deleted one. Now $\tilde{a}$ either produced a new $0$ or it deleted one. In the former case, if this is on level one, it needs to be either a single $0$ or part of a sequence that was there before. Thus, that sequence was even before adding a $0$. In the latter case this deleted a $0$ that was in an even sequence of $0$'s, hence is now a new odd sequence of $0$'s.
\end{proof}

\begin{lemma}
\label{LemsecondInverse}
Algorithms~\ref{algo3dim} and~\ref{Ualgo3dim} are inverse.
\end{lemma}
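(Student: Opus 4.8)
The plan is to prove the two algorithms are inverse by following their natural row-by-row structure: Algorithm~\ref{algo3dim} inserts the first row, then the second row (pairs from right to left), then the third row (pairs from right to left), while Algorithm~\ref{Ualgo3dim} extracts the third row, then the second row, then the first row. I would show that the third-row extraction phase of Algorithm~\ref{Ualgo3dim} exactly inverts the third-row insertion phase of Algorithm~\ref{algo3dim}; that after this phase we are left with a word in which every $0$ occurs in a pair on level one, which is precisely the stopping condition of the first while-loop (guaranteed by Lemma~\ref{Lemma0s}); that the second-row extraction phase then inverts the second-row insertion phase and leaves the alternating word $(1,-1,1,-1,\dots)$; and finally that the first-row phases are literally inverse, since the labels remaining in the word are exactly the entries of the first row. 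Using the remark that one may stop after any pair insertion, each phase is handled by induction on the number of inserted pairs. Since every individual step of each algorithm is manifestly reversible, running the same analysis starting from a vacillating tableau shows that Algorithm~\ref{algo3dim}$\,\circ\,$Algorithm~\ref{Ualgo3dim} is also the identity, so the two maps are mutually inverse.

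The key structural fact feeding the induction, which follows from the \enquote{locality} observations already used to prove the algorithms well-defined, is that when a pair $(a,b)$ is inserted all modifications occur among the positions strictly between $a$ and $b$ together with at most one position immediately to the left of $a$; since pairs are inserted from right to left, later insertions act strictly further left, so the insertions are \enquote{leftward-nested} and the last pair inserted is the one whose trace is leftmost. That leftmost trace is exactly what Algorithm~\ref{Ualgo3dim} extracts first: the leftmost position that is neither a $1$ nor on level $0$ in the second-row phase, and the leftmost $0$ in $3$-row-position in the third-row phase (here one uses Lemma~\ref{Lemma0s} to see that each third-row insertion creates exactly one new such $0$, lying to the left of those created before). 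Reversing a single second-row pair insertion is then a short case check: \enquote{insert $a$ case 1/case 2} of Algorithm~\ref{algo3dim} is undone by \enquote{case 1/case 2} of the extraction of $a$ in Algorithm~\ref{Ualgo3dim}, the $-1$ deleted as $b$ is the $-1$ that was inserted as $b$, and the pair of $0$'s created between $a$ and $b$ (together with the $1,-1$ they came from) sits at the correct level by Lemma~\ref{Lemma0s}.

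The real work is the third-row phase. Here I would track, along a single insertion, the position $\tilde b$ (the $-1$ turned into a $0$ when $b$ is inserted) and the position $\tilde a$ (the endpoint of the leftward walk, turned into a $0$ in case~1 or a $1$ in case~2), and show that Algorithm~\ref{Ualgo3dim}, walking rightward from $\tilde a$, visits exactly the same positions in reverse order and performs the inverse operation at each: each \enquote{connect} (a $1$ on level $0$ and its left neighbour become a pair of $0$'s, their labels recorded in $C$) is undone by an \enquote{undo connect} (a pair of $0$'s on level one with an odd number of positions to its left becomes $-1,1$), each \enquote{separate} is undone by an \enquote{undo separate}, and the extraction of $b$ recovers $\tilde b$ as the first $0$ in $3$-row-position encountered. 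The delicate points are: (i) showing that the parity/level guards on \enquote{separate}, \enquote{undo separate}, \enquote{undo connect} and \enquote{case 2} select the matching positions, which is where the \enquote{$3$-row-position} bookkeeping and the evenness of the total length enter, together with Lemma~\ref{Lemma0s} to control where $0$'s sit; (ii) checking that the role of the list $C$ (which prevents a $0$ just created by a \enquote{connect} from being chosen as $\tilde a$ in case~2) is matched exactly by the \enquote{$3$-row-position} predicate in the extraction; and (iii) confirming that the two walks terminate at corresponding moments, i.e. $\tilde a$ is reached from the left by Algorithm~\ref{algo3dim} while inserting a pair iff it is reached from the right by Algorithm~\ref{Ualgo3dim} while extracting the same pair. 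I expect points (i)–(ii), the reconciliation of the \enquote{connect/separate} guards with the \enquote{$3$-row-position} predicate over the course of one insertion, to be the main obstacle; once that is settled, the induction over pairs and the symmetric argument for the reverse composition follow routinely.
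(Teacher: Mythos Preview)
Your plan is essentially the paper's own approach: reduce to showing that the second-row and third-row phases are mutually inverse, use Lemma~\ref{Lemma0s} to see that the stopping condition of the third-row extraction loop matches the boundary between the two phases, and then undo one pair at a time. The paper declares the second-row case \enquote{obvious} and, for the third row, settles your obstacles (i)--(ii) with a single parity observation: between $a$ and $b$, \enquote{connect} is applied only at positions with an even number of letters to the left and \enquote{separate} only at positions with an odd number to the left, so after the insertion any return to level~$0$ between $a$ and $b$ sits at an odd position; this is exactly the parity guard that lets \enquote{undo connect} and \enquote{undo separate} pick out the correct pairs, and it simultaneously forces the $3$-row-position predicate to recover $\tilde b$. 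Once you see this parity invariant, the rest of your outline goes through without further difficulty.
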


\begin{proof}
Due to Lemma~\ref{Lemma0s} and the fact that Algorithm~\ref{Ualgo3dim} extracts third row elements only as long as there are $0$'s in 3-row-position, it suffices to show that inserting and extracting the second (respectively third) row are inverse.

This is obvious for the second row. For the third row we point out that due to \enquote{connect} and \enquote{separate} between $a$ and $b$ in the third row a return to level zero between $a$ and $b$ can only be at odd points as we \enquote{connect} at even points and \enquote{separate} at odd ones.
\end{proof}

We continue proving the following important property.

\begin{lemma}
\label{LemDescP}
Algorithm~\ref{algo3dim} is descent-preserving.
\end{lemma}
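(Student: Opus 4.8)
The plan is to prove the apparently stronger incremental statement: after every insertion of a pair in Algorithm~\ref{algo3dim}, the current word and the sub-tableau of $Q$ formed by the entries inserted so far have the same descent set. By the Remark preceding the lemma this sub-tableau is genuinely a standard Young tableau (after relabelling by relative order), so its descent set is defined; indeed one checks that filling the first row, then a right-hand piece of the second row, then a right-hand piece of the third row always gives a valid shape. The induction is on the number of inserted pairs; the base case is the word $(1,-1,1,-1,\dots)$ produced by the first row, which has empty descent set (every $1$ is immediately followed by a $-1$ on level $0$), matching the single-row standard Young tableau.

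The standard Young tableau side of the inductive step is easy to control: adjoining a pair $(a,b)$ to row $r\in\{2,3\}$ of a standard Young tableau can only create new descents, and only at $a-1$ and at $b-1$. Since rows $1,\dots,r-1$ are inserted before row $r$, and pairs within a row are inserted from right to left, the rows containing $a-1$ and $b-1$ are already fixed, and a descent at $a-1$ (resp.\ $b-1$) is created, once and for all, precisely when the row of $a-1$ (resp.\ $b-1$) lies strictly above $r$. The only descents not produced in this way are those at $a$ or $b$ whose successor lies below row $r$; these occur only for $r=2$ with the successor in row $3$, and are created during the processing of row $3$, with $a$ or $b$ then playing the role of the predecessor of a third-row entry. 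Accumulating over all pairs: after the second row is processed one must match the descent set of the two-row tableau $Q|_{\{1,2\}}$, and after the third row the descent set of $Q$.

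It then remains to check that the word is modified compatibly: inserting a pair into row $r$ should create a descent at $a-1$ (resp.\ $b-1$) exactly when the row of $a-1$ (resp.\ $b-1$) lies strictly above $r$, and should alter the descent status of no other position. For the second row this is a local verification. The only entries of the word that change lie strictly between $a$ and $b$ or are the inserted letters $a$ and $b$; I would run through \enquote{insert $a$ case 1}, \enquote{insert $a$ case 2}, the flip of a $-1$ into a $1$, and the replacement of pairs $(1,-1)$ by $(0,0)$, in each case examining the three positions around the labels $a-1,a,a+1$ and around $b-1,b,b+1$. Lemma~\ref{Lemma0s} guarantees that afterwards every $0$ belongs to a pair on level $1$, which pins down the possible local shapes, and in every case the new descent is seen to appear exactly when the cell above the inserted one lies in the first row, with no other descent created or destroyed.

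The technical heart, and the main obstacle, is the third row, because \enquote{connect} and \enquote{separate} rewrite an entire stretch of the word between $a$ and $b$, whereas whether position $j$ is a descent depends on whether $j$ is on level $0$. A single \enquote{connect} or \enquote{separate} is not descent-preserving: turning a pair $(-1,1)$ on level $0$ into $(0,0)$ on level $1$ flips the descent status at both neighbouring positions. The plan is to analyse an entire run of the inner \texttt{while}-loop as a whole. Walking leftward from $b$, each successive \enquote{connect}/\enquote{separate} cancels the descent flip left behind by its predecessor and produces a new one one step further to the left, so the cumulative effect of the run is a flip only at its two endpoints; the flip of $\tilde b$ to $0$ that accompanies the insertion of $b$ absorbs the right endpoint, and the insertion of $-1$ at $a$ together with the choice of $\tilde a$ (cases $1$ and $2$) absorbs the left endpoint, leaving behind exactly one new descent at $a-1$, present iff $a-1$ lies in row $1$ or $2$. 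The bookkeeping---which positions are on level $0$, which labels have been collected in the list $C$, and the parity that separates \enquote{connect} from \enquote{separate}---reduces to a finite case analysis on the few possible local configurations. Descent-preservation of Algorithm~\ref{algo3dim} then follows by composing the per-pair statements, the odd-length case being covered by the observation that the three appended cells only append the descent-free suffix $1,0,-1$ to the word.
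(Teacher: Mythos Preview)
Your approach is essentially the paper's: both argue incrementally, showing that after every pair insertion the labelled word and the partial tableau have the same descent set, and both isolate the third-row \texttt{connect}/\texttt{separate} rewriting as the only nontrivial point. The paper organises the inductive step as (i) the newly inserted $a$ and $b$ create descents at their predecessors (with the adjacent-$a,b$ exception), (ii) no old descent is lost, and (iii) \texttt{separate}, \texttt{connect}, and the $(1,-1)\mapsto(0,0)$ replacement preserve descents. For (iii) the paper observes that a $0$ neighbouring a separated pair must itself be $\tilde a$, $\tilde b$, or come from an adjacent \texttt{connect}, so that, for instance, a descent $(1,0)$ turns into a descent $(0,-1)$. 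This is exactly your ``flips cancel pairwise along the run'' mechanism, phrased locally rather than globally; so the content is the same, only the packaging differs.
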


\begin{proof}
We show that the algorithm preserves descents after every insertion of a pair $a,b$ in the sense that we consider the inserted numbers as a new total order.

In the first step we show that when we insert a pair $(a,b)$, $a$ and $b$ cause a descent except for the case that $a$ and $b$ are neighbors in the set of already inserted numbers. To see why we want this to hold, we consider the partial standard Young tableau consisting only of already inserted numbers. The number smaller than $a$ needs to be in a row below $a$ as numbers in the same row to the right are bigger. The same holds for $b$ except if $a$ and $b$ are neighbors in the current order.

In the case that $a$ and $b$ are neighbors, they are both inserted as $-1$'s and we have to show that only $a$ is a descent. Thus everything else is analogous to the general case.

If $a$ or $b$ is inserted as $-1$ the only way that this causes no descent is that the position to the left is a $-1$ or a $1$ on level zero. A $-1$ directly to the left of $a$ would change because it becomes an $\tilde{a}$ when inserting the second row. A $-1$ directly to the left of $b$ would change into $0$, when inserting the second row. When inserting the third row, this $-1$ would be $\tilde{a}$ or $\tilde{b}$ and thus changed into $0$. A $1$ on level zero left of $a$ or $b$ would cause $a$ to be inserted as $0$ when inserting the second row and would be not on level zero anymore in any other case.

In the second step we show that we do not lose descents when inserting a pair $(a,b)$. If an entry was a descent in the partial tableau before inserting $(a,b)$ it is still a descent in the new partial tableau, either with the same number above, or with $a$ or $b$. In the former case  neither $a$ nor $b$ are inserted inbetween those. In the latter case either $a$ or $b$ is inserted inbetween. This creates a descent in the vacillating tableau and removes the other descent as $(-1,x)$ can never be a descent. Choosing $\tilde{a}$ or $\tilde{b}$ changes a $-1$ into a $0$ or a $0$ into a $-1$. The latter cannot cancel a descent as the position left of this $0$ needs to be also a $0$. The former could only cancel a descent if there is a $0$ directly to the left. This $0$ needs to be on level one and therefore we \enquote{separate}. Thus the descent $(0,-1)$ is changed into the descent $(1,0)$.

What remains to be shows is that \enquote{separate}, \enquote{connect} and \enquote{change $(1,-1)$ into $(0,0)$} when inserting the second row preserves the descent set. For the latter this is clear as $(1,-1)$ on level zero is not a descent. For \enquote{separate} we consider a $0$ left or right of a position that was changed in \enquote{separate}. Those need to be either $\tilde{a}$ or $\tilde{b}$ or they were changed in \enquote{connect}, because otherwise they would have been \enquote{separated} also. In the former case we want a descent, in the latter too, as $(1,0)$ has changed into $(0,-1)$ or the other way around. The same holds for $1$'s or $-1$'s of \enquote{connect}.
\end{proof}

The following two lemmas about Algorithm~\ref{algo3dim} are essential for our bijection and hold as the algorithm is descent-preserving (see Lemma~\ref{LemDescP}) and defines a bijection (see Lemma~\ref{LemsecondInverse}).
\begin{lemma}
\label{Lem10-1}
Let $e_1<e_2<e_3$ be the three biggest entries in the standard Young tableau $\tilde{Q}$. The last three positions of the corresponding vacillating tableau are $1,0,-1$ if and only if $\tilde{Q}$ has three non-empty rows and $e_i$ is the last position of row $i$ for $1\leq i \leq 3$.
\end{lemma}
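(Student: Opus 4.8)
The plan is to derive both implications directly from two facts already available about Algorithm~\ref{algo3dim}: that it is descent-preserving (Lemma~\ref{LemDescP}), so $\Des(V)=\Des(\tilde Q)$ for the vacillating tableau $V$ obtained from $\tilde Q$ via the algorithm, and that $V$ is a vacillating tableau of shape $\emptyset$, hence a Riordan path that returns to the $x$-axis with no east step on the axis. The latter already forces the last step $s_r$ of $V$ to be $-1$: a final $0$ would be an east step on the axis, and a final $1$ would require the height $-1$ just before it. I set $r=|\tilde Q|$ (even, and $\ge 3$ in the only interesting cases). I will also use two elementary observations about the descent patterns of a vacillating tableau for $k=1$, read off from the Definition in Section~\ref{SubSecDesc}: among the three patterns $(1,0)$, $(0,-1)$, $(1,-1)$, none ends in the entry $1$, and $(1,0)$ is the only one ending in the entry $0$.

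For the ``if'' direction I would argue as follows. Assume $\tilde Q$ has three non-empty rows with $e_i$ the final entry of row $i$. Since $e_1<e_2<e_3$ are the three largest entries of $\tilde Q$, necessarily $e_1=r-2$, $e_2=r-1$, $e_3=r$. As $r-1=e_2$ lies in a row strictly below $r-2=e_1$, and $r=e_3$ lies in a row strictly below $r-1$, the definition of $\Des$ for standard Young tableaux gives $r-2,r-1\in\Des(\tilde Q)=\Des(V)$. Now $s_r=-1$ together with $r-1\in\Des(V)$ forces $s_{r-1}\in\{0,1\}$; the value $s_{r-1}=1$ is impossible because then $(s_{r-2},s_{r-1})$ would be a descent pair ending in $1$, contradicting $r-2\in\Des(V)$. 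Hence $s_{r-1}=0$, and then $r-2\in\Des(V)$ forces $s_{r-2}=1$, since $(1,0)$ is the only descent pair ending in $0$. So $(s_{r-2},s_{r-1},s_r)=(1,0,-1)$.

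For the ``only if'' direction I would start from $(s_{r-2},s_{r-1},s_r)=(1,0,-1)$. Then $(s_{r-2},s_{r-1})=(1,0)$ and $(s_{r-1},s_r)=(0,-1)$ are descent pairs, so $r-2,r-1\in\Des(V)=\Des(\tilde Q)$. By definition this means $r-1$ lies strictly below $r-2$ in $\tilde Q$ and $r$ lies strictly below $r-1$; since $\tilde Q$ has at most three rows, the entries $r-2,r-1,r$ must occupy rows $1,2,3$ respectively, all of which are therefore non-empty. Finally, any cell to the right of $r-2$ in row $1$ would carry an entry exceeding $r-2$, i.e.\ $r-1$ or $r$, which lie in rows $2$ and $3$ — impossible; the same applies to $r-1$ in row $2$ and to $r$ in row $3$. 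Hence $r-2,r-1,r$ are the last entries of rows $1,2,3$, and being the three largest entries they are exactly $e_1,e_2,e_3$.

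The point I expect to require the most care is convincing oneself that this purely descent-theoretic argument is complete, i.e.\ that one need not trace Algorithm~\ref{algo3dim} through the final insertions of rows $1$, $2$ and $3$. The leverage comes from the asymmetry of the descent patterns (no pattern ends in $1$; only $(1,0)$ ends in $0$) combined with the forced value $s_r=-1$, which together pin down $s_{r-1}$ and $s_{r-2}$ from $\Des(V)$ alone — in particular the level condition in the $(1,-1)$ pattern and the value of $s_{r-3}$ never enter. A fallback, should some gap appear, is to run the last steps of the algorithm explicitly and invoke Lemma~\ref{LemsecondInverse} to pass between the two directions, but the descent argument should make that unnecessary.
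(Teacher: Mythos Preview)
Your argument is correct and follows exactly the approach the paper indicates: the paper states (without further detail) that this lemma ``holds as the algorithm is descent-preserving (see Lemma~\ref{LemDescP}) and defines a bijection (see Lemma~\ref{LemsecondInverse}),'' and your proof spells out precisely how descent-preservation alone, together with the Riordan-path constraint forcing $s_r=-1$, pins down both directions. In fact you show that the reference to bijectivity is not needed for this particular lemma.
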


For the next lemma we point out that there are $1,-1$ on level zero in the end of a vacillating tableau if and only if the two largest numbers in the corresponding standard Young tableaux are in the first row.

\begin{lemma}
\label{LemHorizonal2}
The skew tableau formed by the cells containing the $m$ largest entries of a standard Young tableau are a horizontal strip with at most one box in the first row that is filled increasingly from left to right, if and only if the last $m$ positions of the corresponding vacillating tableau are a sequence of $-1$'s.
\end{lemma}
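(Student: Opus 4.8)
\emph{Proof strategy.} The plan is to prove the equivalence by induction on $m$, tracking how the tail of the vacillating tableau is read off from, and reconstructed by, Algorithms~\ref{algo3dim} and~\ref{Ualgo3dim}. Since these algorithms are mutually inverse (Lemma~\ref{LemsecondInverse}) one may argue on whichever direction is convenient; I will mostly work with the extraction algorithm. Two preliminary facts will be used throughout: a vacillating tableau of shape $\emptyset$ always ends in a $-1$, since an east step on the $x$-axis is forbidden and the path must return to height $0$; and ``filled increasingly from left to right'' is a genuine constraint, not automatic for a horizontal strip, so the smallest of the $m$ largest entries must occupy the westmost cell of the strip.

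\emph{Base cases.} For $m=1$ both sides always hold. For $m=2$ the statement is the remark preceding the lemma together with the observation that the three possible second-to-last steps $1$, $0$, $-1$ are distinguished as follows: step $r-1$ is a $1$ (necessarily leaving level $0$) exactly when the two largest entries lie in the first row; step $r-1$ is a $0$ exactly when they lie in one column; and step $r-1$ is a $-1$ in the remaining case, which is exactly when $\{r-1,r\}$ is a horizontal strip with at most one box in the first row. These alternatives are pinned down using descent-preservation (Lemma~\ref{LemDescP})---the entry $r-1$ is a descent of the tableau iff step $r-1$ of $V$ is a descent, which is decisive for the local shapes $(0,-1)$ and $(-1,-1)$---and, to separate $(1,-1)$ from $(-1,-1)$, by the height attained just before step $r-1$, to which the definition of descent for vacillating tableaux refers explicitly.

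\emph{Inductive step.} Assume the claim for $m-1$. If $V$ has shape $\emptyset$ and its last $m$ steps are $-1$'s, then so are its last $m-1$, and by induction the cells with entries $r-m+2,\dots,r$ of $Q=\mathrm{Alg}^{-1}(V)$ form a horizontal strip with at most one box in the first row, filled increasingly; I must show that adjoining the cell with entry $r-m+1$---which, to keep the filling increasing, must be the westmost cell of the enlarged strip---again yields such a configuration, and that this cell lies in row $2$ or row $3$ unless it is the unique first-row cell. The point is that when Algorithm~\ref{Ualgo3dim} reaches the state in which the current tail of $-1$'s is about to be consumed, it extracts a pair $(a,b)$ with $b$ its current maximum into the appropriate row, removing exactly one trailing $-1$, while the attendant ``undo connect''/``undo separate'' moves occur at lower levels strictly to the left of the tail and leave the rest of it intact---here Lemma~\ref{Lemma0s}, locating the $0$'s after each insertion, is what guarantees the tail is not disturbed. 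The forward implication is the mirror image: if the cells with entries $r-m+1,\dots,r$ form such a strip, the induction hypothesis applied to $r-m+2,\dots,r$ gives the last $m-1$ steps of $V$, and re-inserting the westmost cell via the relevant branch of Algorithm~\ref{algo3dim} appends precisely one further $-1$, because inserting a pair whose larger element is the global maximum happens at the right end of the word and triggers no ``connect''/``separate'' affecting the already-built tail; the hypothesis ``at most one box in the first row'' is exactly what prevents the final step from being a $1$ instead of a $-1$ (cf. the remark before the lemma, covering the first-row cell).

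\emph{Main obstacle.} The delicate part is the bookkeeping in the third-row branch: the moves ``connect'', ``separate'' and the notion of ``$3$-row-position'' make the interaction between the inserted pair $(a,b)$ and the tail of $-1$'s nontrivial, and one must check separately, in each of the three subcases where the new cell enters row $1$, row $2$ or row $3$ of the tableau, that (i) exactly one $-1$ is added at (or removed from) the very end, (ii) the new cell lands at the west end of the horizontal strip so that ``filled increasingly'' is preserved, and (iii) no spurious descent appears among positions $r-m+1,\dots,r-1$---the consistency check supplied by descent-preservation. A secondary complication is that Algorithm~\ref{algo3dim} processes entries row by row rather than in value order, so the cell with entry $r-m+1$ need not be the last inserted; identifying the unique insertion (or extraction) step that actually touches the tail, and invoking the ``stop after any pair'' remark so that the partial vacillating tableau so obtained is the one associated with the already-processed entries, is the step I expect to demand the most care.
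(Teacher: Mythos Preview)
The paper gives no detailed proof: it simply asserts that Lemmas~\ref{Lem10-1} and~\ref{LemHorizonal2} ``hold as the algorithm is descent-preserving (Lemma~\ref{LemDescP}) and defines a bijection (Lemma~\ref{LemsecondInverse})'', together with the remark immediately preceding Lemma~\ref{LemHorizonal2}. Unpacked, the intended argument is global rather than inductive. On the tableau side, ``the $m$ largest entries form a horizontal strip filled increasingly from left to right'' is \emph{equivalent} to ``none of $r-m+1,\dots,r-1$ is a descent of $Q$'' (the row indices of $r-m+1,\dots,r$ are weakly decreasing, which forces the interleaving $\mu_1\geq\lambda_2\geq\mu_2\geq\lambda_3$ and puts the row-$3$ cells strictly left of the row-$2$ cells strictly left of the row-$1$ cells). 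On the path side, ``none of $r-m+1,\dots,r-1$ is a descent of $V$'' forces the last $m$ letters to have the form $(-1)^h(1,-1)^k$ with $h+2k=m$: one checks that no $0$ can occur (a rightmost $0$ would be followed only by $1$'s) and that every $1$ must start at level~$0$. Descent preservation matches these two conditions. It then remains to match ``$k=0$'' with ``at most one box in row~$1$'', and this is exactly the remark: the final two letters are $1,-1$ from level~$0$ iff $r-1,r$ both lie in row~$1$, which under the no-descent hypothesis is iff the strip has at least two boxes in row~$1$.

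Your inductive framework is not wrong, but it is much heavier than needed, and the obstacle you flag---that Algorithm~\ref{algo3dim} works row by row rather than in value order---is a red herring. For $m\geq 3$ the inductive step is \emph{immediate} from descent preservation: given that the last $m-1$ letters are $-1$, the letter $w_{r-m+1}$ is $-1$ iff position $r-m+1$ is not a descent of $V$ (the alternatives $0$ and $1$ both give descents, since the level before $w_{r-m+2}$ is $m-1\geq 2$), iff $r-m+1$ is not a descent of $Q$, iff the cell $r-m+1$ sits weakly below $r-m+2$ and hence extends the strip at its west end without touching row~$1$. No inspection of ``connect/separate'' or of $3$-row-positions is required. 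The only place where descents do not decide everything is $m=2$, and there you correctly invoke the remark; note however that your description of the case $w_{r-1}=0$ (``they lie in one column'') is inaccurate---it corresponds to $r$ lying in a strictly lower row than $r-1$, which need not be the same column.
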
 
 
We complete this section by stating further properties of Algorithm~\ref{algo3dim}. To do so we need the concept of concatenating tableaux.

\begin{definition}
We obtain the \emph{concatenation} $Q$ of two standard Young tableaux $Q_1$ and $Q_2$ in the following way. First we add the largest entry of $Q_1$ to each entry of $Q_2$ to obtain the tableau $\widetilde{Q_2}$. Then we append the $i^{\text{th}}$ row $\widetilde{Q_2}$ to the $i^{\text{th}}$ row of $Q_1$ to obtain $Q$.

The \emph{concatenation} of two vacillating tableaux is obtained by concatenating the words.
\end{definition}

To see if a standard Young tableau $Q$ can be written as concatenation of two standard Young tableaux $Q_1$ and $Q_2$, one has to check every possible $Q_1$. Thus there is no easy way to see this.

Algorithm~\ref{algo3dim} has a nice property concerning concatenation that we prove later, using an alternative formulation of Algorithm~\ref{Ualgo3dim}. The following theorem follows directly from Corollary~\ref{BogenCor2} and the fact that Algorithm~\ref{algo3dim} defines a bijection (see Lemma~\ref{LemsecondInverse}). 

\begin{theorem}
\label{TheoConcat}
Considering Algorithm~\ref{algo3dim}, concatenation of vacillating tableaux of empty shape corresponds to concatenation of standard Young tableaux where all row lengths have the same parity.

In particular, the following holds:
\begin{itemize}
\item If a vacillating tableau is composed of two concatenated paths with weight $0$, its corresponding standard Young tableau can be written as concatenation of two standard Young tableaux, such that each of those has row lengths of the same parity.
\item On the other hand if a standard Young tableau can be written as concatenation of two standard Young tableaux, such that each of those has row lengths of the same parity, its corresponding vacillating tableau is also composed of two concatenated paths with weight $0$.
\end{itemize}
\end{theorem}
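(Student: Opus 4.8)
The plan is to reduce Theorem~\ref{TheoConcat} to the single identity $\Phi(Q_1\cdot Q_2)=\Phi(Q_1)\cdot\Phi(Q_2)$, where $\Phi$ denotes the map realised by Algorithm~\ref{algo3dim} on standard Young tableaux with at most three rows, all of even length. Both sides of the correspondence are closed under concatenation: a rowwise sum of even numbers is even, so $Q_1\cdot Q_2$ again has all rows of even length and at most three of them, while a shape-$\emptyset$ vacillating tableau returns to height $0$ and, since east steps on the $x$-axis are forbidden, is at height $0$ only where a standalone path would be, so juxtaposing two such words gives a legal shape-$\emptyset$ word. Granting the identity, Lemma~\ref{LemsecondInverse} (bijectivity of $\Phi$) immediately yields both bulleted claims: reading the identity through $\Phi^{-1}$ gives the first, reading it directly gives the second. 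The parity-annotated statement for all-odd (or mixed) rows then follows from the even case together with the $1,0,-1$ / $e{+}1,e{+}2,e{+}3$ adjustment of Section~\ref{bijection}, which is itself compatible with concatenation.

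To prove the identity I would run Algorithm~\ref{algo3dim} on $Q_1\cdot Q_2$ while recording the running word as a concatenation of a \emph{left block} (positions whose labels come from $Q_1$) and a \emph{right block} (positions whose labels, after the shift, come from $Q_2$). After the first row is inserted the word is literally $I(Q_1)\cdot I(Q_2)$, the two alternating initial words side by side. Thereafter rows are inserted one at a time and, within a row, pairs are processed from right to left; because every entry of $\widetilde{Q_2}$ exceeds every entry of $Q_1$ and each row of $Q_1$ has even length, the pairs split cleanly along the block boundary, so the algorithm handles first all second-row pairs of $Q_2$, then all of $Q_1$, then all third-row pairs of $Q_2$, then all of $Q_1$. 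The crux is the claim, to be proved by a single induction over these steps: \textbf{(i)} every insertion, every local ``change'' step, every ``connect''/``separate'', and every leftward search for $\tilde a$ triggered by a $Q_2$-pair stays strictly inside the right block (and symmetrically $Q_1$-pairs touch only the left block); and \textbf{(ii)} consequently the net effect of the $Q_2$-pairs on the right block is exactly that of running Algorithm~\ref{algo3dim} on $Q_2$ in isolation, and likewise for $Q_1$ on the left block. Writing $R_2(Q_i)$ and $R_3(Q_i)$ for the words obtained after inserting the first two, resp. all three, rows of $Q_i$ alone, (i) and (ii) feed back into each other to give the word $R_2(Q_1)\cdot R_2(Q_2)$ after the second row and $R_3(Q_1)\cdot R_3(Q_2)=\Phi(Q_1)\cdot\Phi(Q_2)$ after the third.

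Part (ii) is pure \emph{locality}: the only non-local data the algorithm reads are the level of a position, the parity of the number of positions strictly to its left, and whether a $0$ is in ``3-row-position'' (a function of those two plus a run length). For a right-block position the level is unchanged, because after every pair insertion the total of all entries is $0$ (the lemma that Algorithm~\ref{algo3dim} outputs a shape-$\emptyset$ tableau), so the left block always ends at height $0$; and the parity of the number of positions to its left is unchanged, because the left block has even length $\lambda_1+\lambda_2$ for the shape $\lambda$ of $Q_1$. Granting (i), the right block therefore evolves step for step as in the standalone run, and once all $Q_2$-pairs are finished (they never touch the left block) the left block evolves step for step as in the standalone run too — here it matters only that the left block is consulted with nothing to its right, so the co-existing right block is irrelevant to left-block levels. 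This closes the induction.

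The main obstacle is part (i), and the delicate point inside it is that the leftward while-loop of a third-row insertion of a $Q_2$-pair must locate $\tilde a$ before crossing the block boundary. I would argue that the cell directly above $a$ in $Q_2$ was created when $Q_2$'s second row was processed, hence lies in the right block, and that $\tilde a$ is found at or to the right of it; by the inductive hypothesis the right block has so far evolved as in the standalone run, so well-definedness of the standalone algorithm (which guarantees $\tilde a$ exists) pins $\tilde a$ inside the right block. The analogous bounds for the ``change the next $-1$ to the left of $a$'' step of the second row, and for ``connect''/``separate'' (which only ever touch the pair $a,b$, both of whose labels lie in a single block), are routine bookkeeping. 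Alternatively --- and this is the route the paper takes --- one first sets up the arc (``Bogen'') reformulation of Algorithm~\ref{Ualgo3dim} and proves Corollary~\ref{BogenCor2}; in that picture the attached arcs never straddle a point at which the path meets the $x$-axis, so the statement is transparent and Theorem~\ref{TheoConcat} drops out of Corollary~\ref{BogenCor2} together with Lemma~\ref{LemsecondInverse}.
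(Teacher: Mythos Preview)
Your reduction to the identity $\Phi(Q_1\cdot Q_2)=\Phi(Q_1)\cdot\Phi(Q_2)$ together with Lemma~\ref{LemsecondInverse} is exactly right, and you correctly note at the end that the paper's own argument goes through the arc reformulation: Theorem~\ref{TheoConcat} is deduced in one line from Corollary~\ref{BogenCor2} (once Theorem~\ref{TheoAlgoBogen} has established the equivalence of Algorithms~\ref{Ualgo3dim} and~\ref{AlgoBogen}), the point being simply that no double or marked edge is open when the path touches level~$0$, so the two halves of a concatenated path are processed independently.

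Your primary route, by contrast, is a direct induction on the steps of Algorithm~\ref{algo3dim} with a left/right block decomposition. This is a genuinely different argument, and the core of it is sound: since the left block always has even length and ends at height~$0$, levels and left-parity counts inside the right block coincide with those of the standalone run on~$Q_2$; hence, by induction, the right block evolves identically to the standalone run, and well-definedness of the standalone algorithm guarantees that $\tilde a$, $\tilde b$, and the ``next $-1$ to the left'' are all located before the while-loop can fall off the left edge of the right block. One phrasing should be corrected: ``connect'' and ``separate'' do \emph{not} only touch the positions $a,b$; they touch the current cursor $c$ and its left neighbour, and $c$ ranges over the whole interval from $b$ down to $\tilde a$. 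The argument that saves you is the same one you already gave for $\tilde a$: because the right block is step-for-step identical to the standalone run, and the standalone loop never reaches position~$1$ (it would have no left neighbour to connect), the concatenated loop never reaches the first position of the right block either.

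The trade-off is clear. Your direct approach avoids the substantial detour through Theorem~\ref{TheoAlgoBogen}, but pays for it with careful boundary bookkeeping at every step of the insertion. The paper's approach invests that effort once in the equivalence of the two formulations, after which concatenation (and several other structural facts) become transparent.
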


\begin{example}
Concatenating the vacillating tableaux
\begin{tikzpicture}[scale=0.35]
\draw (0,0)  -- (1,1)  -- (2,1)  -- (3,0);
\end{tikzpicture}
and
\begin{tikzpicture}[scale=0.35]
\draw (0,0)  -- (1,1)  -- (2,2)  -- (3,1) -- (4,0);
\end{tikzpicture}
yields
\begin{tikzpicture}[scale=0.35]
\draw (0,0)  -- (1,1)  -- (2,1)  -- (3,0) -- (4,1) -- (5,2) -- (6,1) -- (7,0);
\end{tikzpicture}.

\raisebox{0.1cm}{Whereas concatenating the corresponding standard Young tableaux}
\begin{tikzpicture}[scale=0.35]
  \draw (0,4) -- (1,4);
  \draw (0,3) -- (1,3);
  \draw (0,2) -- (1,2);
  \draw (0,1) -- (1,1);
  \draw (0,4) -- (0,1);
  \draw (1,4) -- (1,1);
  \draw (0.5,3.5) node {1};
  \draw (0.5,2.5) node {2};
  \draw (0.5,1.5) node {3};
\end{tikzpicture}
\raisebox{0.1cm}{and}
\begin{tikzpicture}[scale=0.35]
  \draw (0,4) -- (2,4);
  \draw (0,3) -- (2,3);
  \draw (0,2) -- (2,2);
  \draw (0,4) -- (0,2);
  \draw (1,4) -- (1,2);
  \draw (2,4) -- (2,2);
  \draw (0.5,3.5) node {1};
  \draw (1.5,3.5) node {2};
  \draw (0.5,2.5) node {3};
  \draw (1.5,2.5) node {4};
\end{tikzpicture}
\raisebox{0.1cm}{yields}
\begin{tikzpicture}[scale=0.35]
  \draw (0,4) -- (3,4);
  \draw (0,3) -- (3,3);
  \draw (0,2) -- (3,2);
  \draw (0,1) -- (1,1);
  \draw (0,4) -- (0,1);
  \draw (1,4) -- (1,1);
  \draw (2,4) -- (2,2);
  \draw (3,4) -- (3,2);
  \draw (0.5,3.5) node {1};
  \draw (0.5,2.5) node {2};
  \draw (0.5,1.5) node {3};
  \draw (1.5,3.5) node {4};
  \draw (2.5,3.5) node {5};
  \draw (1.5,2.5) node {6};
  \draw (2.5,2.5) node {7};
\end{tikzpicture}\raisebox{0.1cm}.
\end{example}

\begin{remark}
This is another reason why we attach elements $e_1<e_2<e_3$ to a standard Young tableau $Q$ where $e_i$ is $i$ plus the largest entry of $Q$ to deal with tableaux with rows of odd length: \begin{tikzpicture}[scale=0.35]
  \draw (0,4) -- (1,4);
  \draw (0,3) -- (1,3);
  \draw (0,2) -- (1,2);
  \draw (0,1) -- (1,1);
  \draw (0,4) -- (0,1);
  \draw (1,4) -- (1,1);
  \draw (0.5,3.5) node {1};
  \draw (0.5,2.5) node {2};
  \draw (0.5,1.5) node {3};
\end{tikzpicture}
and
\begin{tikzpicture}[scale=0.35]
\draw (0,0)  -- (1,1)  -- (2,1)  -- (3,0);
\end{tikzpicture}
correspond to each other and attaching these elements is just concatenation of $Q$ with this column.
\end{remark}

Moreover, one might be interested in the pre-images of Dyck paths in Algorithm~\ref{algo3dim}. We can answer this question also using the concept of concatenation.

\begin{corollary}
Considering Algorithm~\ref{algo3dim}, Dyck paths, viewed as vacillating tableaux, correspond to those standard Young tableaux that have at most two rows, both of even length, which cannot be written as concatenation of two standard Young tableaux with two rows both of odd length.
\end{corollary}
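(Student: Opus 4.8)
Write $\Phi$ for the bijection of Algorithm~\ref{algo3dim}. Since the Dyck paths are exactly the vacillating tableaux of empty shape that never take a $0$ step, the task is to identify those $Q$ for which $\Phi(Q)$ has no $0$. First I would eliminate the third row: if $Q$ has a non-empty third row, then the last (hence leftmost) third-row pair that gets processed produces, by Lemma~\ref{Lemma0s}, a $0$ in $3$-row-position, after which the algorithm stops, so $\Phi(Q)$ has a $0$ step. Hence the preimage of the set of Dyck paths is contained in the set of standard Young tableaux with at most two rows, both of even length, and from now on $Q$ is such a tableau.

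For these $Q$, Lemma~\ref{Lemma0s} tells us that after all second-row pair insertions any $0$'s occur in pairs on level one, and inspecting Algorithm~\ref{algo3dim} the only steps creating such a pair are \enquote{insert $a$ case~$1$} and \enquote{change pairs of $1,-1$ between $a$ and $b$ into $0,0$}. Thus $\Phi(Q)$ is a Dyck path if and only if, throughout the second-row insertion, every pair $(a,b)$ is handled by \enquote{case~$2$} and no $1,-1$ pair ever lies strictly between $a$ and $b$. The plan is to prove that this happens precisely when $Q$ cannot be written as a concatenation $Q=Q_1\cdot Q_2$ in which each of $Q_1$ and $Q_2$ has two rows of odd length. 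One implication would come from Theorem~\ref{TheoConcat}: if such a decomposition exists then, both factors having constant row-parity, $\Phi(Q)$ is the concatenation $V_1\cdot V_2$ of the weight-$0$ paths attached to $Q_1$ and $Q_2$; since a $0$ step is never cancelled under concatenation, it suffices to know that the path attached to a two-row all-odd tableau always contains a $0$. I would get this from a classification of the $0$-free vacillating tableaux of empty shape of even length $2n$: peeling off the first return to the $x$-axis and inducting on $n$, these are exactly the $\Phi$-images of the one-row tableaux and of the two-row all-even tableaux admitting no such decomposition, so no two-row all-odd tableau occurs among them.

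For the converse I would run Algorithm~\ref{algo3dim} on $Q$ and induct on the number of second-row pairs already inserted, maintaining the invariant that the sub-tableau of the entries inserted so far admits no splitting into two tableaux each with two rows of odd length. Using that the smallest entries of $Q$ always occupy a straight sub-diagram, and tracking how this shape together with the two entries directly above $a$ and $b$ determines which branch of the second-row step is taken, I would check that under the invariant every pair is processed by \enquote{case~$2$} with no intervening $1,-1$ pair, so that no $0$ is created and the invariant survives. The main obstacle is exactly this bookkeeping: showing that \enquote{no bad splitting of the already-inserted sub-tableau} is the precise condition forbidding both \enquote{case~$1$} and the $1,-1\to 0,0$ merge at every step; a secondary technical point is pinning down the path attached to a two-row all-odd tableau in the sense of Theorem~\ref{TheoConcat} and establishing the first-return classification of $0$-free empty-shape vacillating tableaux used above. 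Combining the two implications with the elimination of the third row gives the corollary.
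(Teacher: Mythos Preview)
Your elimination of the third row via Lemma~\ref{Lemma0s} is fine, but the argument for ``decomposable $\Rightarrow$ not a Dyck path'' has a genuine gap. A standard Young tableau with exactly two rows, both of odd length, is \emph{not} in the domain of the bijection: Algorithm~\ref{algo3dim} is defined only for tableaux with all rows of even length, and the extension for the odd case (appending a column and stripping $1,0,-1$) requires three odd rows, not two. Hence there is no path $V_i=\Phi(Q_i)$ attached to your factors $Q_i$, and Theorem~\ref{TheoConcat} cannot be invoked on them. Your fallback, the ``classification of $0$-free vacillating tableaux of empty shape'', is exactly the statement you are trying to prove, so using it to conclude that no two-row all-odd tableau occurs among their preimages is circular.

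The paper sidesteps this by an augmentation trick that you are missing. For the direction ``$\Phi(Q)$ has a $0$ $\Rightarrow$ $Q$ decomposes'', it takes the rightmost $0$ (say with label $c$), inserts two new third-row entries at positions $c-\tfrac12$ and $r+\tfrac12$, and observes that the ensuing third-row insertion forces a ``separate''. This yields a concatenation of two odd-length paths, and now Theorem~\ref{TheoConcat} legitimately applies to the augmented tableau, whose factors each have \emph{three} odd rows; stripping the single third-row cell from each factor gives the desired decomposition of $Q$. For the other direction the paper again augments each factor $Q_i$ with one third-row entry, so that Theorem~\ref{TheoConcat} applies to the concatenation $Q_1'\cdot Q_2'$; since the resulting path returns to level~$0$ at an odd position, the third-row insertion of this single pair must perform a ``separate'', which by definition consumes two $0$'s already present in the intermediate word---and that intermediate word is precisely $\Phi(Q)$.

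Your proposed converse (inducting through the second-row insertions and tracking when case~1 or the $1,-1\to 0,0$ merge can occur) is a different route and might be made to work, but it is considerably more laborious than the paper's contrapositive via augmentation, and you yourself flag the bookkeeping as the main obstacle. The augmentation idea handles both implications uniformly and avoids ever needing to attach a path to a two-row odd-odd tableau.
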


\begin{proof}
As inserting the third row produces $0$'s (Lemma~\ref{Lemma0s}), the tableaux we are looking for have at most two rows.

First we consider paths of length $r$ with $0$'s that correspond to two-rowed tableaux. Suppose we insert two numbers left and right of the rightmost such $0$ at inserting the third row. Call the label of it $c$. This can be done by adding $c-1/2$ and $r+1/2$ into the third row of the corresponding two-rowed tableau. This would cause \enquote{separate}. Thus the path would be composed of two concatenated paths. Thus we can write the new standard Young tableau as concatenation of two tableaux standard Young tableau. As there is only one position in the third row each, both have two odd rows. Thus the original tableau can be written as concatenation of two standard Young tableaux with two rows, both of odd length.

Now we consider a two-rowed standard Young tableau that can be written as the concatenation of two two-rowed standard Young tableaux with rows of odd length. We add an entry in the third row of each. This corresponds to a concatenation of two odd length paths. Thus during the insertion process of this tableau we have \enquote{separating}, and for \enquote{separating} we need two $0$'s. Thus the corresponding path for our two-rowed tableau cannot be a Dyck path.
\end{proof}

Furthermore we suspect the following.

\begin{conjecture}[Rubey, personal conversation]
Let $T$ be a standard Young tableau with at most three rows all of even length or all of odd length. Suppose that $\varepsilon (T)$ denotes the evacuation of the $T$. Then the vacillating tableau (of shape $\emptyset$) corresponding to $\varepsilon(T)$ is the reversal of the vacillating tableau that corresponds to $T$.
\end{conjecture}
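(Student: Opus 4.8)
The plan is to prove the conjecture by realizing the reversal of a vacillating tableau as a reflection of the planar array underlying the alternative description of Algorithm~\ref{algo3dim}, and then showing that this reflection induces evacuation on the standard Young tableau side, exactly as reflecting an RSK growth diagram replaces the recording tableau by its evacuation. Before attacking this, I would record two consistency facts. Evacuation is shape-preserving, so if $T$ has three rows all of the same parity then so does $\varepsilon(T)$, and the right-hand side makes sense. Moreover it is classical (Schützenberger) that $\Des(\varepsilon(T))=\{r-d:d\in\Des(T)\}$ for $|T|=r$; on the other side, writing a vacillating tableau of shape $\emptyset$ as a highest weight word $w=w_1\cdots w_r$, its reversal is $\bar w=(-w_r,-w_{r-1},\dots,-w_1)$, and a direct check against the definition of descents for vacillating tableaux — using that $w$ has equally many $1$'s and $-1$'s, so \enquote{$\#1-\#(-1)=0$ in a prefix} is equivalent to the same statement for the complementary suffix — gives $\Des(\bar w)=\{r-d:d\in\Des(w)\}$. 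Since Algorithm~\ref{algo3dim} is descent-preserving (Lemma~\ref{LemDescP}), the two sides of the conjecture have equal descent sets; this is a necessary condition and a good sanity check, but not a proof, as Algorithm~\ref{algo3dim} is not determined by descent sets alone.

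Next I would reduce the case of three rows of odd length to the even case. Let $\Phi$ denote Algorithm~\ref{algo3dim} and let $C$ be the full-height one-column standard Young tableau with entries $1,2,3$. If $T$ has three rows of odd length, the overall bijection applies $\Phi$ to the concatenation $T\cdot C$ and then deletes the trailing $1,0,-1$ of the path (Lemma~\ref{Lem10-1}); by Theorem~\ref{TheoConcat} and the remark following it, the vacillating tableau attached to $T$ is $\Phi(T\cdot C)$ with the final $1,0,-1$ removed, and $\Phi(T\cdot C)=\Phi(T)\cdot(1,0,-1)$. Since $(1,0,-1)$ is fixed by reversal and reversal turns a concatenation into the reverse-concatenation of the reversals, the conjecture for $T$ follows from the conjecture for $T\cdot C$ provided $\varepsilon(T\cdot C)=C\cdot\varepsilon(T)$ — i.e.\ that attaching the full-height column on the right of $T$ corresponds under the Schützenberger involution to prepending it on the left of $\varepsilon(T)$, which one checks directly by tracking jeu-de-taquin slides (and which is immediate for rectangular inputs). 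Granting this, from now on one may assume $T$ has three rows of even length and $\Phi(T)$ has even length $r=|T|$.

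For the main step I would replace Algorithm~\ref{algo3dim} (or rather its inverse) by the local/growth-diagram description used to establish Theorem~\ref{TheoConcat}: the vacillating tableau is read off the boundary of a planar array whose cells obey local forward rules, with the three passes (first, second, third row), the insertion cases, and the \enquote{connect}/\enquote{separate} moves encoded as local moves. In this picture, reversing the path is precisely the reflection of the array across a vertical axis. The plan is then to show that the local rules are invariant under this reflection, in the sense that the reflected, relabelled array is again legal and decodes, on the tableau side, to $\varepsilon(T)$ rather than $T$ — the descent computation above being exactly the shadow of this statement, just as it is in the classical case. The verification splits by pass: (i) the first-row pass, where the reflection is the identity and there is nothing to check; (ii) the second-row pass, where the two insertion cases and the move \enquote{$(1,-1)\mapsto(0,0)$} are interchanged by the reflection; and (iii) the third-row pass, where one must check that \enquote{connect} and \enquote{separate}, together with the \enquote{$3$-row-position} bookkeeping and the level-$1$/level-$2$ distinction, are exchanged consistently, so that the reflected array still produces a standard Young tableau of the same shape.

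I expect step (iii) to be the main obstacle. The moves \enquote{connect} and \enquote{separate} have no analogue in classical RSK growth diagrams, and it is precisely their interaction with the reflection — that \enquote{separate} occurs at odd return-points while \enquote{connect} occurs at even ones (cf.\ the proof of Lemma~\ref{LemsecondInverse}), so reflecting an interval of length $r$ swaps the two — that has to be made to cohere globally across all three passes and to be compatible with the trailing-$-1$ and trailing-$(1,0,-1)$ deletions (Lemmas~\ref{LemHorizonal2} and~\ref{Lem10-1}) used in the outer bijection. A secondary difficulty is the reduction identity $\varepsilon(T\cdot C)=C\cdot\varepsilon(T)$, which although standard must be stated and applied with care since evacuation does not distribute over concatenation in general. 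If a sufficiently clean growth-diagram description proves elusive, a fallback is induction on $r$ via promotion, using that evacuation is built from iterated promotions and that Algorithm~\ref{algo3dim}, read one insertion at a time, is compatible with adjoining a largest entry; but I expect the reflection-of-the-array argument to give the cleanest and most conceptual proof.
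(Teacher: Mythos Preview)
The statement you are attempting to prove is presented in the paper as an open \emph{conjecture}; the paper offers no proof, only an illustrative example. So there is no paper proof to compare against, and what you have written is a strategy sketch rather than a proof.

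The central step of your plan rests on something the paper explicitly does not provide and in fact warns against. You propose to realize Algorithm~\ref{algo3dim} via ``the local/growth-diagram description used to establish Theorem~\ref{TheoConcat}'' and then argue that reflecting the array corresponds to evacuation. But Theorem~\ref{TheoConcat} is \emph{not} proved via a growth diagram: it is deduced from Corollary~\ref{BogenCor2}, which in turn comes from the arc/edge description of Algorithm~\ref{AlgoBogen}. That description (and its reformulation Algorithm~\ref{AlgoBogenAlt}) is a one-pass left-to-right procedure carrying global running statistics ($odm$, $h$, $ce$, $a_0$), not a planar array with local forward rules. The paper's introduction says outright that ``at least a naive application of Fomin's ideas does not even yield the desired bijection''; in other words, the object you want to reflect is not known to exist. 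You flag step~(iii) --- the interaction of \enquote{connect}/\enquote{separate} and the \enquote{$3$-row-position} bookkeeping with the reflection --- as the main obstacle and do not carry it out; absent an actual local-rule model together with a verified reflection symmetry, this is a hope, not an argument.

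The reduction to the even case has a separate gap. The identity $\varepsilon(T\cdot C)=C\cdot\varepsilon(T)$ for the full-height column $C$ is not standard (evacuation does not distribute over concatenation in general, as you note), and ``one checks directly by tracking jeu-de-taquin slides'' is an assertion, not a check. Your parenthetical ``immediate for rectangular inputs'' is true but irrelevant, since $T\cdot C$ is rectangular only when $T$ already is. The descent-set computation in your first paragraph is correct and is a genuine consistency check, but as you say it does not determine the bijection; it is the only part of the proposal that is actually established.
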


\begin{example}
Remember our running example:

\begin{tikzpicture}[scale=0.35]
  \draw (0,4) -- (6,4);
  \draw (0,3) -- (6,3);
  \draw (0,2) -- (4,2);
  \draw (0,1) -- (4,1);
  \draw (0,4) -- (0,1);
  \draw (1,4) -- (1,1);
  \draw (2,4) -- (2,1);
  \draw (3,4) -- (3,1);
  \draw (4,4) -- (4,1);
  \draw (5,4) -- (5,3);
  \draw (6,4) -- (6,3);
  \draw (0.5,3.5) node {1};
  \draw (1.5,3.5) node {2};
  \draw (0.5,2.5) node {3};
  \draw (0.5,1.5) node {4};
  \draw (1.5,2.5) node {5};
  \draw (2.5,3.5) node {6};
  \draw (1.5,1.5) node {7};
  \draw (2.5,2.5) node {8};
  \draw (3.5,3.5) node {9};
  \draw (2.5,1.5) node {10};
  \draw (4.5,3.5) node {11};
  \draw (5.5,3.5) node {12};
  \draw (3.5,2.5) node {13};
  \draw (3.5,1.5) node {14};
\end{tikzpicture}
\begin{tikzpicture}[scale=0.35]
\draw (0,0)  -- (1,1) -- (2,2) -- (3,2) -- (4,1) -- (5,1) -- (6,2) -- (7,1) -- (8,1) -- (9,2) -- (10,1) -- (11,0) -- (12,1) -- (13,1) -- (14,0);
\end{tikzpicture}

Now applying evacuation on the standard Young tableau yields the following, where the corresponding vacillating tableau is the reversal of the one above.

 \begin{tikzpicture}[scale=0.35]
  \draw (0,4) -- (6,4);
  \draw (0,3) -- (6,3);
  \draw (0,2) -- (4,2);
  \draw (0,1) -- (4,1);
  \draw (0,4) -- (0,1);
  \draw (1,4) -- (1,1);
  \draw (2,4) -- (2,1);
  \draw (3,4) -- (3,1);
  \draw (4,4) -- (4,1);
  \draw (5,4) -- (5,3);
  \draw (6,4) -- (6,3);
  \draw (0.5,3.5) node {1};
  \draw (0.5,2.5) node {2};
  \draw (0.5,1.5) node {3};
  \draw (1.5,3.5) node {4};
  \draw (2.5,3.5) node {5};  
  \draw (1.5,2.5) node {6};
  \draw (3.5,3.5) node {7};
  \draw (4.5,3.5) node {8};
  \draw (1.5,1.5) node {9};
  \draw (2.5,2.5) node {10};
  \draw (5.5,3.5) node {11};
  \draw (3.5,2.5) node {12};
  \draw (2.5,1.5) node {13};
  \draw (3.5,1.5) node {14};
\end{tikzpicture}
\begin{tikzpicture}[scale=0.35]
\draw (0,0)--(1,1)--(2,1)--(3,0)--(4,1)--(5,2)--(6,1)--(7,1)--(8,2)--(9,1)--(10,1)--(11,2)--(12,2)--(13,1)--(14,0);
\end{tikzpicture}
\end{example}

\subsection{Properties of the Bijection}
\label{SubsectionProperties}

\begin{theorem}
The map defined in Section~\ref{bijection} is
\begin{enumerate}
\item well-defined,
\item has a reversed mapping and thus is a bijection between vacillating tableaux and pairs consisting of a standard Young tableau and an orthogonal Littlewood-Richardson tableau,
\item descent-preserving.
\end{enumerate}
\end{theorem}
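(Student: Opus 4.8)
The plan is to assemble the theorem from the properties of the two constituent algorithms established in Sections~\ref{Sub1stAlgo} and~\ref{Sub2ndAlgo}, handling separately the three bookkeeping operations that glue them together: the parity adjustment inside Algorithm~\ref{algoLR3}; the optional attachment of $e+1,e+2,e+3$ at the ends of the three rows when $\tilde Q$ has three odd rows (and, on the path side, the deletion of a trailing $1,0,-1$); and the final deletion of $\mu_1$ trailing $(-1)$-steps. For \textbf{well-definedness} I would chase the object types through Figure~\ref{FigStrategy}. Lemma~\ref{LemHorizonal1} gives that $\tilde Q$ is a standard Young tableau whose rows all have the same parity and whose $\mu_1$ added cells form a horizontal strip with at most one cell in the first row, filled increasingly. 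Attaching $e+1,e+2,e+3$ to the ends of the three rows of an all-odd $\tilde Q$ visibly produces an all-even standard Young tableau, to which Algorithm~\ref{algo3dim} applies, returning a vacillating tableau of shape $\emptyset$ (of even length). Lemma~\ref{Lem10-1} then certifies that, in that case, the last three positions of the output word really are $1,0,-1$, so their deletion is legitimate; and Lemma~\ref{LemHorizonal1} together with Lemma~\ref{LemHorizonal2} certifies that the last $\mu_1$ positions are $(-1)$-steps, so the final deletion produces a vacillating tableau of shape $\mu=(\mu_1)$ and length $r=|\lambda|$.

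For \textbf{bijectivity} the core inputs are Lemma~\ref{LemfirstInverse}, that Algorithms~\ref{algoLR3} and~\ref{UalgoLR3} are mutually inverse, and Lemma~\ref{LemsecondInverse}, that Algorithms~\ref{algo3dim} and~\ref{Ualgo3dim} are mutually inverse. What remains is to verify that the reverse recipe of Section~\ref{bijection} really inverts the gluing operations, and here the key observation is that the boolean ``$\tilde Q$ has three odd rows'' is recoverable from the final output: since $|\tilde Q|=|\lambda|+\mu_1=r+\mu_1$, that boolean is exactly the parity of $r+\mu_1$, where $r$ is the length of the output vacillating tableau and $\mu_1$ is read off its shape. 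This is precisely the test ``the shape-$\emptyset$ vacillating tableau has odd length'' applied in the reverse direction after re-appending $\mu_1$ steps $(-1)$. Each of the three bookkeeping operations is individually invertible, and by this parity observation their inverses are triggered under exactly the conditions under which the forward operations were performed; hence the forward and reverse maps compose to the identity in both orders, and the map of Section~\ref{bijection} is a bijection onto pairs $(Q,L)$ with $Q\in\SYT(\lambda)$, $L\in\LRtabs$, $\mu\le\lambda\vdash r$.

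For \textbf{descent preservation} I would declare the descent set of a pair $(Q,L)$ to be $\Des(Q)$ (the tableau $L$ carries no descent data) and track descents stepwise, using repeatedly that deleting or appending letters at the \emph{end} of a highest weight word can only change the descent status of a position $j$ lying at or beyond the affected suffix, since whether $j$ is a descent depends only on $w_j$, $w_{j+1}$ and the prefix $w_1,\dots,w_{j-1}$. Concretely: the $\mu_1$ added cells of $\tilde Q$ carry the entries $r+1,\dots,r+\mu_1$, so $\Des(\tilde Q)\cap\{1,\dots,r-1\}=\Des(Q)$; attaching $e+1,e+2,e+3$ leaves $e$ a non-descent (its successor $e+1$ lands in the first row, while $e$ is a corner of $\tilde Q$) and adds precisely the descents $e+1$ and $e+2$; Lemma~\ref{LemDescP} transports the descent set unchanged across Algorithm~\ref{algo3dim}; deleting the trailing $1,0,-1$ removes exactly $\{e+1,e+2\}$ (the positions $e$ and $e+3$ being non-descents in that word); and deleting the trailing block of $(-1)$-steps leaves all descents at positions $\le r-1$ intact. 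Composing these equalities yields $\Des(V)=\Des(Q)$.

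I expect the main obstacle to be this last, descent-bookkeeping step: one must be scrupulous about the conditional clause in the definition of a descent of a vacillating tableau (the exception $w_jw_{j+1}=j(-j)$ when $\#j-\#(-j)=0$ on the prefix) when checking that the boundary positions $e$, $e+3$ and $r$ are non-descents, and careful that appending or truncating a suffix does not alter the prefixes that feed into that clause. Everything else amounts to quoting the well-definedness and inverse-pair lemmas of Sections~\ref{Sub1stAlgo} and~\ref{Sub2ndAlgo} and type-chasing through Figure~\ref{FigStrategy}.
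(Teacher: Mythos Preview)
Your proposal is correct and takes essentially the same approach as the paper's own proof, which simply points to the lemmas of Sections~\ref{Sub1stAlgo} and~\ref{Sub2ndAlgo} and singles out Lemma~\ref{LemHorizonal1}, Lemma~\ref{Lem10-1} and Lemma~\ref{LemHorizonal2} as the tools for the gluing steps. You have spelled out in detail exactly how those lemmas are used for well-definedness and bijectivity, and your explicit descent bookkeeping (in particular the observation that appending or deleting letters at the end of a highest weight word cannot disturb the descent status of any earlier position, and that $e$ is never a descent after attaching $e{+}1$ in row~1) is a careful elaboration of what the paper leaves implicit.
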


\begin{proof}
All we need is stated in the Sections~\ref{Sub1stAlgo} and~\ref{Sub2ndAlgo}.

We point out that the second statement of Lemma~\ref{LemHorizonal1}, Lemma~\ref{Lem10-1} and Lemma~\ref{LemHorizonal2} play a crucial role when proving this for the parts of the bijection that are not described by Algorithm~\ref{algoLR3} or~\ref{algo3dim}.
\end{proof}

\subsection{Alternative Formulation of the Second Bijection}
\label{SubsectionAlternative}

In this section we present Algorithm~\ref{AlgoBogen}, an alternative formulation for the inverse algorithm of the second bijection (Algorithm~\ref{Ualgo3dim}) which gives us a better understanding of Algorithms~\ref{Ualgo3dim} and~\ref{algo3dim} and proves some further properties.

\begin{algorithm}
\label{AlgoBogen}
\SetKwInOut{Input}{input}\SetKwInOut{Output}{output}
\Input{vacillating tableau $V$ in terms of a Riordan path}
\Output{standard Young tableau $Q$}
we build a graph, whose vertices are the steps of our Riordan path\;
we allow at most one incoming and at most one outgoing edge for each step; moreover, it is not allowed for an edge, that starts (respectively ends) at a step that has an incoming and an outgoing edge, to end (respectively start) at such a step also; we call those edges \enquote{double edges}\;
moreover, some (non double) edges might be marked\;
\For{entries $i$ in $V$ starting with the leftmost going to right}{
insert (half)edges for entry $i$ between the entries of $V$ as described in Table~\ref{TabAlgoBogen}\;
connect as many (half)edges as possible with respect to the following rules (any such connecting is admissible)\;
\Indp always connect outgoing edges with incoming edges and vice versa, moreover outgoing edges can only be connected to incoming ones to the right\;
outgoing edges from a $-1$ cannot be connected with incoming edges unless there are no open double or marked edges\;
incoming edges of $0$'s are not allowed to be connected with open double or marked edges\;
\Indm put $i$ in the row of $Q$ according to Table~\ref{TabAlgoBogen}\;
}
\Return $Q$\;
 \caption{vacillating tableaux to standard Young tableaux: alternative algorithm}
\end{algorithm}

\begin{table}
\centering
\caption{Conditions for Algorithm~\ref{AlgoBogen}}
\label{TabAlgoBogen}
\begin{tabular}{|p{0.7cm}|p{3.6cm}|p{3.9cm}|p{3cm}|p{2.25cm}|p{0.5cm}|}
\hline
entry & \multicolumn{3}{|p{9cm}|}{conditions} & kind of edge(s) & row\\
\hline \hline
1 & \multicolumn{3}{|p{9cm}|}{always (no $0$ is active any more)} & outgoing & 1 \\
\hline
\multirow{6}{*}{0} & \multicolumn{3}{|p{9cm}|}{ $\#$ open double/marked edges $+1<$ level} & incoming and outgoing & 2\\
\cline{2-6}
& \multirow{4}{3.7cm}{$\#$ open double/marked edges $+1=$ level\\ (this $0$ is active, as well as $0$'s right from here until the next $1$ / end of word)} & \multirow{2}{4cm}{level and  $\#$ closed edges have different parity} &  $\#$ active $0$'s odd & incoming and outgoing & 2\\
\cline{4-6}
  &   &   & $\#$ active $0$'s even & incoming & 2\\
 \cline{3-6}
 & &\multirow{2}{4cm}{level and  $\#$ closed edges have same parity} &  $\#$ active $0$'s odd &  marked outgoing & 1\\
 \cline{4-6}
   &   &  & $\#$ active $0$'s even & outgoing & 1\\
    \cline{2-6}
& \multicolumn{3}{|p{10cm}|}{ $\#$ open double/marked edges $+1>$ level} & outgoing & 1\\
\hline
\multirow{4}{*}{$-1$} & \multicolumn{3}{|p{9cm}|}{ $\#$ open double/marked edges $>0$} & incoming & 3\\
\cline{2-6}
& \multirow{3}{4cm}{$\#$ open double/marked edges $=0$} & \multicolumn{2}{|p{6cm}|}{level $>0$} &incoming & 2\\
\cline{3-6}
& & \multirow{2}{4cm}{level $=0$} & $\#$ closed edges odd & incoming &2\\
\cline{4-6}
    &  &  & $\#$ closed edges even  &  outgoing &1 \\
\hline
\end{tabular}
\end{table}

\begin{theorem}
\label{TheoAlgoBogen}
Algorithm~\ref{AlgoBogen} is well-defined and equivalent to Algorithm~\ref{Ualgo3dim} for vacillating tableaux of shape $\emptyset$ and even length.
\end{theorem}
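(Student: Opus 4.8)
The statement asserts two things: that Algorithm~\ref{AlgoBogen} is well-defined (every choice it makes can be carried out, and — since the connecting rules allow many admissible choices — the resulting $Q$ does not depend on which admissible connections are made), and that the $Q$ it produces coincides with the one produced by Algorithm~\ref{Ualgo3dim}. The natural route is to set up a simultaneous induction on the length of an initial segment of the Riordan path $V$, tracking a precise dictionary between the two algorithms' intermediate states. On the Algorithm~\ref{Ualgo3dim} side the ``state'' after processing a prefix is the partially extracted word together with the partially filled $Q$; on the Algorithm~\ref{AlgoBogen} side it is the current graph of (half)edges on the prefix together with its partially filled $Q$. The key invariant to maintain is: \emph{the open (unmatched) half-edges on the Bogen side are in bijection with the ``not yet resolved'' rows/positions that the extraction procedure of Algorithm~\ref{Ualgo3dim} still has to act on}, in such a way that an outgoing edge corresponds to a step that will become an $\tilde a$ or a left endpoint of some extracted pair, an incoming edge to a step that will be a right endpoint, a double edge to a step lying on level $1$ between an $a$ and a $b$ of a third-row pair (the ``connect'' spots), and a marked edge to the analogous configuration arising from ``separate''. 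Under this dictionary the row-assignment column of Table~\ref{TabAlgoBogen} is exactly the case analysis performed by Algorithm~\ref{Ualgo3dim} when it decides whether a given step is extracted into row $1$, $2$, or $3$.

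The concrete steps I would carry out are, first, to make the dictionary precise by reading off from Algorithm~\ref{Ualgo3dim} the ``level'', ``$\#$ closed edges'', ``$\#$ open double/marked edges'' and ``$\#$ active $0$'s'' quantities as functions of the extraction state, using Lemma~\ref{Lemma0s} (which guarantees that after second-row insertions the only $0$'s are paired on level $1$, and that third-row insertions produce $0$'s in 3-row-position — hence the parity bookkeeping in Table~\ref{TabAlgoBogen} is the parity bookkeeping of sequences of $0$'s). Second, I would verify the base case: for the empty prefix both algorithms have empty $Q$ and no structure. Third, the inductive step: when a new entry $i\in\{1,0,-1\}$ is appended, I check each row of Table~\ref{TabAlgoBogen} against the corresponding branch of Algorithm~\ref{Ualgo3dim}'s ``extract $3^{\text{rd}}$ / $2^{\text{nd}}$ / $1^{\text{st}}$ row'' loops, confirming that the edge(s) inserted and the row chosen match the invariant, and that the connecting rules (outgoing before incoming, outgoing-from-$(-1)$ only when no open double/marked edge, incoming-of-$0$ not allowed to close a double/marked edge) are precisely the constraints forcing the extraction to pair $a$ with the correct $b$. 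Fourth — and this is the part that needs care — I would show \emph{independence of the admissible choices}: different legal matchings of half-edges differ only by a sequence of elementary ``swaps'' of which outgoing edge gets matched to which incoming edge among edges that are interchangeable under the rules, and each such swap changes neither the multiset of open edge-types remaining nor the row-assignments (because the row of entry $i$ depends only on the \emph{counts} $\#$ open double/marked edges, level, $\#$ closed edges, $\#$ active $0$'s, not on the identities of the endpoints). This confluence argument, organized as: any two admissible runs can be connected by a chain of local modifications each preserving $Q$, is the cleanest way to get well-definedness.

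The main obstacle I expect is exactly this last point — pinning down ``admissible'' tightly enough that the local-swap confluence genuinely goes through, since the interplay of double edges, marked edges, and the ``outgoing from $-1$'' restriction creates several sub-cases where a naive swap would violate a rule and must be replaced by a two-step detour. A secondary difficulty is matching the \emph{marked}-edge mechanism of Algorithm~\ref{AlgoBogen} with the ``separate/undo separate'' steps of Algorithm~\ref{Ualgo3dim}: marked edges are created in the ``level and $\#$ closed edges have the same parity'' branch of the $0$-row of Table~\ref{TabAlgoBogen}, and I must check that this is triggered in exactly the situations where Algorithm~\ref{Ualgo3dim} performs a ``separate'' between an $a$ and a $b$, using again Lemma~\ref{Lemma0s} to control parities. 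Once the dictionary and the confluence lemma are in place, equivalence with Algorithm~\ref{Ualgo3dim} — and hence, via Lemma~\ref{LemsecondInverse}, the fact that Algorithm~\ref{AlgoBogen} inverts Algorithm~\ref{algo3dim} — follows by reading off the final $Q$ at the end of the induction.
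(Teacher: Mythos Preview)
Your plan has a structural mismatch that makes the inductive scheme, as stated, unworkable. Algorithm~\ref{Ualgo3dim} is \emph{not} a left-to-right procedure: it first extracts the entire third row (each extraction modifying the word globally), then the entire second row, then the first. So ``the state of Algorithm~\ref{Ualgo3dim} after processing a prefix'' is not a well-defined object, and the row assigned to position~$i$ is not a priori a function of the prefix $V_1,\dots,V_i$ alone. Establishing that locality is essentially equivalent to the theorem you are trying to prove (indeed the paper only obtains it \emph{afterwards}, as Corollary~\ref{BogenCor1} via Algorithm~\ref{AlgoBogenAlt}). Your induction would therefore be circular unless you first prove, directly for Algorithm~\ref{Ualgo3dim}, that extending $V$ on the right never changes the row of an earlier entry --- and that is exactly the hard content.

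The paper sidesteps this by \emph{not} doing prefix induction. It first disposes of well-definedness with the simple remark that the row of entry~$i$ in Table~\ref{TabAlgoBogen} depends only on the four counts (level, number of closed edges, number of open double/marked edges, number of active~$0$'s), never on which particular half-edges were matched; hence any admissible matching gives the same $Q$ and one may fix, say, first-come-first-served. Your confluence-by-local-swaps argument would also work but is heavier than necessary. For equivalence the paper argues in two layers: (i) for words with no $0$'s in $3$-row-position, check directly that Table~\ref{TabAlgoBogen} reproduces the second-row extraction of Algorithm~\ref{Ualgo3dim}; (ii) show that the double/marked-edge mechanism detects exactly the $0$'s in $3$-row-position and their associated $-1$'s; and then --- the key step you are missing --- show that the modifications Algorithm~\ref{Ualgo3dim} performs while extracting the third row (changing $\tilde a,\tilde b$, ``undo connect'', ``undo separate'') leave the parity and count statistics governing the remaining row assignments in Table~\ref{TabAlgoBogen} invariant. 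This stability-under-modification argument replaces your prefix invariant and is what lets the phase-based Algorithm~\ref{Ualgo3dim} be compared to the one-pass Algorithm~\ref{AlgoBogen}.
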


Before we prove this we give some examples.
\begin{example}
In our running example we can obtain the standard Young tableau directly. (Even though we do not need to, we label the edges for better readability.)

\begin{tikzpicture}[scale=0.35]
\draw (0,0)  -- (1,1) node[midway, above]{1} -- (2,2) node[midway, above]{2} -- (3,2) node[midway, above]{3} -- (4,1) node[midway, above]{4} -- (5,1) node[midway, above]{5} -- (6,2) node[midway, above]{6} -- (7,1) node[midway, above]{7} -- (8,1) node[midway, above]{8} -- (9,2)  node[midway, above]{9};
\draw[line width=0.5mm] (0.5,0.5)to[bend right, looseness = 1](2.5,2)to[bend right, looseness = 1](3.5,1.5);
\draw[line width=0.5mm] (1.5,1.5)to[bend right, looseness = 1.2](4.5,1)to[bend right, looseness = 1.5](6.5,1.5);
\draw[line width=0.5mm] (5.5,1.5)to[bend right, looseness = 1.5](7.5,1)to[bend right, looseness = 1](9.5,1);
\draw[line width=0.5mm] (8.5,1.5)to[bend right, looseness = 1](9.5,1.5);
\end{tikzpicture}
\hspace{3.1cm}
\begin{tikzpicture}[scale=0.35]
  \draw (0,4) -- (4,4);
  \draw (0,3) -- (4,3);
  \draw (0,2) -- (3,2);
  \draw (0,1) -- (2,1);
  \draw (0,4) -- (0,1);
  \draw (1,4) -- (1,1);
  \draw (2,4) -- (2,1);
  \draw (3,4) -- (3,2);
  \draw (4,4) -- (4,3);
  \draw (0.5,3.5) node {1};
  \draw (1.5,3.5) node {2};
  \draw (0.5,2.5) node {3};
  \draw (0.5,1.5) node {4};
  \draw (1.5,2.5) node {5};
  \draw (2.5,3.5) node {6};
  \draw (1.5,1.5) node {7};
  \draw (2.5,2.5) node {8};
  \draw (3.5,3.5) node {9};
\end{tikzpicture}
\end{example}

\begin{example}
We give another example where a marked edge occurs and the vacillating tableau has weight $\emptyset$. We point out that while ordinary edges can be open when the path returns to level zero, double or marked edges are never open at such a point.

\begin{tikzpicture}[scale=0.35]
\draw (0,0)  -- (1,1) node[midway, above]{1} -- (2,2) node[midway, above]{2} -- (3,1) node[midway, above]{3} -- (4,1) node[midway, above]{4} -- (5,2) node[midway, above]{5} -- (6,2) node[midway, above]{6} -- (7,3) node[midway, above]{7} -- (8,3) node[midway, above]{8} -- (9,2) node[midway, above]{9} -- (10,2) node[midway, above]{10} -- (11,1) node[midway, above]{11} -- (12,0) node[midway, above]{12};
\draw[line width=0.5mm] (0.5,0.5)to[bend right, looseness = 1](2.5,1.5);
\draw[line width=0.5mm] (1.5,1.5)to[bend right, looseness = 1.5](4.5,0.5)to[bend right, looseness = 1.5](5.5,2)to[bend right, looseness = 1](10.5,1.5);
\draw[line width=0.5mm] (3.5,1) node[draw, circle, scale=0.8] {} to[bend right, looseness = 1.4](8.5,2.5) node[draw, circle, scale=0.8] {};
\draw[line width=0.5mm] (4.5,1.5)to[bend right, looseness = 1.5](7.5,3);
\draw[line width=0.5mm] (6.5,2.5)to[bend right, looseness = 1.2](11.5,0.5);
\draw[line width=0.5mm] (9.5,2)to[bend right, looseness = 1](10.8,0);
\end{tikzpicture}
\hspace{2cm}
\begin{tikzpicture}[scale=0.35]
  \draw (0,4) -- (6,4);
  \draw (0,3) -- (6,3);
  \draw (0,2) -- (4,2);
  \draw (0,1) -- (2,1);
  \draw (0,4) -- (0,1);
  \draw (1,4) -- (1,1);
  \draw (2,4) -- (2,1);
  \draw (3,4) -- (3,2);
  \draw (4,4) -- (4,2);
  \draw (5,4) -- (5,3);
  \draw (6,4) -- (6,3);
  \draw (0.5,3.5) node {1};
  \draw (1.5,3.5) node {2};
  \draw (0.5,2.5) node {3};
  \draw (2.5,3.5) node {4};
  \draw (3.5,3.5) node {5};
  \draw (1.5,2.5) node {6};
  \draw (4.5,3.5) node {7};
  \draw (2.5,2.5) node {8};
  \draw (0.5,1.5) node {9};
  \draw (5.5,3.5) node{10};
  \draw (1.5,1.5) node{11};
  \draw (3.5,2.5) node{12};
\end{tikzpicture}
\end{example}

\begin{example}
This example shows the crucial point in counting open double or marked edges. Note that before inserting $9$, the tableaux, the height, and the next position to consider are the same.

\begin{tikzpicture}[scale=0.35]
\draw (0,0)  -- (1,1) node[midway, above]{1} -- (2,2) node[midway, above]{2} -- (3,1) node[midway, above]{3} -- (4,1) node[midway, above]{4} -- (5,2) node[midway, above]{5} -- (6,3) node[midway, above]{6} -- (7,3) node[midway, above]{7} -- (8,4) node[midway, above]{8} -- (9,3) node[midway, above]{9};
\draw[line width=0.5mm] (0.5,0.5)to[bend right, looseness = 1](2.5,1.5);
\draw[line width=0.5mm] (1.5,1.5)to[bend right, looseness = 1.5](5.5,0.5)to[bend right, looseness = 1.3](6.5,3)to[bend right, looseness = 1](9.7,2);
\draw[line width=0.5mm] (3.5,1) node[draw, circle, scale=0.8] {} to[bend right, looseness = 1.2](8.5,3.5) node[draw, circle, scale=0.8] {};
\draw[line width=0.5mm] (4.5,1.5)to[bend right, looseness = 1](9.7,1);
\draw[line width=0.5mm] (5.5,2.5)to[bend right, looseness = 1](9.7,1.5);
\draw[line width=0.5mm] (7.5,3.5)to[bend right, looseness = 1](9.7,2.5);
\end{tikzpicture}
\hspace{0.2cm}
\begin{tikzpicture}[scale=0.35]
  \draw (0,4) -- (6,4);
  \draw (0,3) -- (6,3);
  \draw (0,2) -- (2,2);
  \draw (0,1) -- (1,1);
  \draw (0,4) -- (0,1);
  \draw (1,4) -- (1,1);
  \draw (2,4) -- (2,2);
  \draw (3,4) -- (3,3);
  \draw (4,4) -- (4,3);
  \draw (5,4) -- (5,3);
  \draw (6,4) -- (6,3);
  \draw (0.5,3.5) node {1};
  \draw (1.5,3.5) node {2};
  \draw (0.5,2.5) node {3};
  \draw (2.5,3.5) node {4};
  \draw (3.5,3.5) node {5};
  \draw (4.5,3.5) node {6};
  \draw (1.5,2.5) node {7};
  \draw (5.5,3.5) node {8};
  \draw (0.5,1.5) node {9};
  \draw (0,1) node{};
\end{tikzpicture}
\hspace{0.2cm}
\raisebox{0.8cm}{whereas}
\hspace{0.2cm}
\begin{tikzpicture}[scale=0.35]
\draw (0,0)  -- (1,1) node[midway, above]{1} -- (2,2) node[midway, above]{2} -- (3,1) node[midway, above]{3} -- (4,2) node[midway, above]{4} -- (5,3) node[midway, above]{5} -- (6,4) node[midway, above]{6} -- (7,3) node[midway, above]{7} -- (8,4) node[midway, above]{8} -- (9,3) node[midway, above]{9};
\draw[line width=0.5mm] (0.5,0.5)to[bend right, looseness = 1](2.5,1.5);
\draw[line width=0.5mm] (1.5,1.5)to[bend right, looseness = 1.5](5.5,0.5)to[bend right, looseness = 1.3](6.5,3.5);
\draw[line width=0.5mm] (3.5,1.5) node {} to[bend right, looseness = 1.2](8.5,3.5) node {};
\draw[line width=0.5mm] (4.5,2.5)to[bend right, looseness = 1](9.7,1);
\draw[line width=0.5mm] (5.5,3.5)to[bend right, looseness = 1](9.7,1.5);
\draw[line width=0.5mm] (7.5,3.5)to[bend right, looseness = 1](9.7,2);
\end{tikzpicture}
\hspace{0.2cm}
\begin{tikzpicture}[scale=0.35]
  \draw (0,4) -- (6,4);
  \draw (0,3) -- (6,3);
  \draw (0,2) -- (3,2);
  \draw (0,4) -- (0,2);
  \draw (1,4) -- (1,2);
  \draw (2,4) -- (2,2);
  \draw (3,4) -- (3,2);
  \draw (4,4) -- (4,3);
  \draw (5,4) -- (5,3);
  \draw (6,4) -- (6,3);
  \draw (0.5,3.5) node {1};
  \draw (1.5,3.5) node {2};
  \draw (0.5,2.5) node {3};
  \draw (2.5,3.5) node {4};
  \draw (3.5,3.5) node {5};
  \draw (4.5,3.5) node {6};
  \draw (1.5,2.5) node {7};
  \draw (5.5,3.5) node {8};
  \draw (2.5,2.5) node {9};
  \draw (0,1) node{};
\end{tikzpicture}
\end{example}

\begin{proof}[Proof of Theorem~\ref{TheoAlgoBogen}]
We start this proof with providing properties of Algorithm~\ref{AlgoBogen}, that prove among others that the algorithm is well-defined:
\begin{itemize}
\item Only the first active $0$ of a sequence of active $0$'s can introduce an open double or marked edge or an incoming-only edge.

To see this we show that once an active $0$ occurs the next active $0$ in the same sequence will be outgoing-only, as well as once an active $0$ has an outgoing-only edge, until the next $1$, all active $0$'s have outgoing-only edges.

Once there is an outgoing double or marked edge the height condition together with the rule that such edges can only be closed at $-1$'s ensure that for all following active $0$'s the the third case for the level will be chosen.

Once there is an incoming-only edge the number of closed edges increases by one. Thus the parity changes. A height decrease also increases the number of closed edges such that we stay in the case of same parity. Moreover, because the number of active $0$'s is even, which is necessary for the incoming-only case, we stay in the case of outgoing-only edges.

Once there is an outgoing-only case the number of closed edges only increases at a loss of height. Now this is analogous to the former argument.

\item Exactly in those steps where there is an incoming (or of course incoming and outgoing) edge introduced, there is exactly one connection of (half)edges. This is between a former outgoing edge and this new incoming one. In any other case it is not possible to connect (half)edges.

To see this we show that there are always enough outgoing open edges. Every $1$ produces an outgoing edge. Every $-1$ (except for the last one maybe) has an incoming one. Therefore without $0$'s this is regulated automatically with the level. The only $0$ case where the number of outgoing edges is reduced is the one where there is only an incoming edge. However in this case the number of active $0$'s is even and due to the former point we know that an outgoing-only edge will follow before reaching the bottom level.

\item There are never open double or marked edges when the vacillating tableau reaches a bottom point, thus level zero.
This holds because there cannot be such new edges when there are as many open ones as the height and those edges need to be closed before an ordinary edge can be closed by a $-1$.

\item The order in which the edges are closed makes no difference. Thus we can assume some first-come-first-served principle to connect (half)edges, and do not consider open (half)edges left of the last $1$ to the left on level zero. Moreover, this ensures that the map defined by the algorithm is well-defined.
\end{itemize}

Next we show that the theorem holds for vacillating tableaux without $0$'s in 3-row-position. Those are mapped to two-rowed standard Young tableaux in Algorithm~\ref{Ualgo3dim}.

In this case we have no $0$'s on a level higher than one and those at level one are always together in even numbers. Therefore we never have any double or marked edges. Thus nothing is put into the third row.

If we go through the path from left to right and find a $-1$ not on level zero, left of the leftmost $0$, if a $0$ exists, we will connect this with some $1$. If we find a $0$ left of the leftmost $-1$ not on level zero this will connect with the $1$ to the left. In both cases it will get to the second row and be our $a$, exactly as it would be in Algorithm~\ref{Ualgo3dim}.

As the number of closed edges is now odd the next closed edge will end in a $-1$ which will also get to the second row and therefore be our $b$, exactly as it would be in Algorithm~\ref{Ualgo3dim}.

Now it makes no difference if we extract those or not, because the number of closed edges is even now, there is a new area of active $0$'s and every other statistic is the same.

Moreover, $-1$'s that belong to the first line are treated correctly as those are at level zero, and will not be considered as new $a$ when the number of closed edges are even.\par\smallskip

Next we show that the algorithm finds the $0$'s in 3-row-position and their associated $-1$'s. It thus follows that the right numbers are put into the third row. We do this inductively.

$0$'s that are at level two or higher when executing Algorithm~\ref{Ualgo3dim} are recognized automatically. In Algorithm~\ref{Ualgo3dim} there would be a level decrease of $1$ between $\tilde{a}$ (respectively $\tilde{b}$) and $a$ (respectively $b$). Therefore open double or marked edges increase the needed height. The next $-1$ not already used by other $0$'s in 3-row-position is used for $a$ (respectively $b$).

$0$'s that are on level one, part of a sequence of $0$'s of odd length, are also recognized. Those are the first positions of active $0$'s as the level condition shows: $1=$level$-\#$ of open double or marked edges.\par\smallskip

Finally we show that it makes no difference for the two-row-case of Algorithm~\ref{AlgoBogen} if we do not carry out changes done by extracting the third row.

When extracting $a$ there are three cases:
\begin{itemize}
\item $\tilde{a}$ is a non active $0$. It becomes a $-1$ on level one or higher, thus still is on level one or higher.
\item $\tilde{a}$ is an active $0$. It becomes a $-1$ on level zero. Whether this is placed in the second or the first row depends in both situations on the parity of closed edges.
\item $\tilde{a}$ is a $1$. Thus to the right there is an active $0$ and the number of active $0$'s is odd. Again whether this is placed in the second or the first row depends in both situations on the parity of closed edges.
\end{itemize}
Extracting $a$ changes the parity of closed edges. Thus in the situations where there is a dependency on the parity of closed edges, this change is considered.

When \enquote{undoing connect} this happens between two active $0$'s. While the right one is always put into the first row, there are two possible ways for the left ones (as well as there are for $-1$'s on level zero), depending on the parity of closed edges. Again everything holds due to the change of the parity when extracting $a$. The same argument holds for \enquote{separate}.

When extracting $b$ there are two cases that are analogous to the first two cases when extracting $a$. Note that if $\tilde{a}<\tilde{b}<a<b$ the level and the number of open double or marked edges is reduced by one.

After extracting $a$ and $b$ the parity of open double or marked edges is the same as before as well as the parity of closed edges. Moreover, there is no conflict with active $0$'s as in an area of active $0$'s there can be at most one $\tilde{a}$ or $\tilde{b}$.
\end{proof}

Algorithm~\ref{AlgoBogen} proves the property about concatenation of tableaux we have stated before in Theorem~\ref{TheoConcat}. As there cannot be an open double or marked edge when the path reaches the bottom (thus has steps on level zero) due to level conditions, concatenated parts of paths are independent of each other.

\begin{corollary}
\label{BogenCor2}
Concatenation of paths with weight $0$ corresponds to concatenation of standard Young tableaux, all of whose row length have the same parity, when applying Algorithm~\ref{AlgoBogen}. Therefore this also holds for Algorithm~\ref{algo3dim} and Algorithm~\ref{Ualgo3dim}.
\end{corollary}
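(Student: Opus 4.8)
The plan is to prove the assertion first for Algorithm~\ref{AlgoBogen} itself and then transfer it: Theorem~\ref{TheoAlgoBogen} identifies Algorithm~\ref{AlgoBogen} with Algorithm~\ref{Ualgo3dim} on weight-zero paths, and Lemma~\ref{LemsecondInverse} identifies Algorithm~\ref{algo3dim} as the inverse of Algorithm~\ref{Ualgo3dim}, so once the concatenation property holds for Algorithm~\ref{AlgoBogen} it holds for all three maps. Write a Riordan path as $V=V_1V_2$ with both $V_1$ and $V_2$ of weight $0$. The goal is to show that the run of Algorithm~\ref{AlgoBogen} on $V$ splits at the boundary: its first $|V_1|$ loop iterations reproduce verbatim the run on $V_1$ alone (producing $Q_1$), and its remaining iterations reproduce the run on $V_2$ alone with every label increased by $|V_1|$ (producing a shifted copy of $Q_2$). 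By the definition of concatenation of standard Young tableaux this says precisely that the output is $Q_1\cdot Q_2$, giving the first itemized consequence; the second (converse) consequence then follows formally, since the map is injective and the forward direction has already exhibited every such $Q_1\cdot Q_2$ as the image of $V_1V_2$.

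The heart of the argument is that the state of Algorithm~\ref{AlgoBogen} genuinely "resets" at a step on level zero. Inspecting Table~\ref{TabAlgoBogen}, the only data consulted while processing an entry are the current level, the number of open double or marked edges, the number of active $0$'s in the current run of active $0$'s, which open ordinary half-edges are available for connection, and a comparison of the parities of the level and of the number of closed edges. The proof of Theorem~\ref{TheoAlgoBogen} already records the facts that (i) there are never open double or marked edges when the path is on level zero, (ii) open ordinary half-edges lying to the left of the last level-zero NE step are never connected later, so under the first-come-first-served convention they may be discarded from that point on, and (iii) no run of active $0$'s crosses a level-zero step. What remains is to control the parity comparison at the boundary; here I would use the bookkeeping identity that an entry acquires an incoming half-edge exactly when it is placed in row $2$ or row $3$ (read off from Table~\ref{TabAlgoBogen}), so the number of closed edges accumulated while processing $V_1$ equals $|\text{row }2\text{ of }Q_1|+|\text{row }3\text{ of }Q_1|$, and then verify directly that this quantity, combined with the relabelling shift $|V_1|$ built into the definition of concatenation, leaves the parity comparison for the run on $V_2$ exactly as if that run had started afresh — tracking both the case where $Q_1$ has all-even rows and the case where it has all-odd rows (the latter occurring precisely when $|V_1|$ is odd).

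I expect this last parity bookkeeping to be the main obstacle: the table's decisions for entry $0$ and for $-1$'s on level zero really do depend on the parity of the number of closed edges, so one must check that the offset accumulated over $V_1$ is exactly compensated by the shift added to the entries of $Q_2$ in the concatenation, and that invariants (i)--(iii) above pin down everything else (number of open double/marked edges $=0$, no carried-over active $0$'s, irrelevant open ordinary edges discarded, level $=0$). Once this is established the splitting of the run is immediate, the translation to Algorithms~\ref{algo3dim} and~\ref{Ualgo3dim} is purely formal via Theorem~\ref{TheoAlgoBogen} and Lemma~\ref{LemsecondInverse}, and the two displayed items of Theorem~\ref{TheoConcat} follow (the first from the forward direction, the second from the converse); the running example in the text serves as a sanity check of the parity arithmetic.
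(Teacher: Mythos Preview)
Your approach is the paper's: argue that the internal state of Algorithm~\ref{AlgoBogen} resets at every return to level zero, so the two halves of the path are processed independently; then transfer via Theorem~\ref{TheoAlgoBogen} and Lemma~\ref{LemsecondInverse}. The paper's own justification is a single sentence noting only that no open double or marked edges survive a return to level zero. You are more careful and correctly flag that the parity of the closed-edge counter also enters the decisions in Table~\ref{TabAlgoBogen} and must be shown to reset --- a point the paper leaves entirely implicit.

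One correction: the ``relabelling shift $|V_1|$'' plays no role in this parity bookkeeping. The algorithm's branching depends only on the counters (level, number of open double/marked edges, number of active $0$'s, parity of closed edges) and never on the labels themselves, so the shift cannot compensate for anything. Drop that clause and argue directly that $c_1=|\text{row }2\text{ of }Q_1|+|\text{row }3\text{ of }Q_1|$ is even: when $|V_1|$ is even the rows of $Q_1$ are all even by Theorem~\ref{TheoAlgoBogen} and the lemma on the output of Algorithm~\ref{Ualgo3dim}; when $|V_1|$ is odd all three rows are non-empty and odd, so the sum is $\text{odd}+\text{odd}=\text{even}$.
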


We consider now another alternative formulation.

\begin{corollary}
\label{BogenCor1}
Algorithm~\ref{AlgoBogenAlt} is an alternative formulation of Algorithm~\ref{AlgoBogen}. For an entry to insert, it only needs the current entry and four statistics we can calculate using only entries that are already inserted.
\end{corollary}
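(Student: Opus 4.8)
The plan is to prove, by induction on the number of processed entries, that the decision made for entry $i$ by Algorithm~\ref{AlgoBogen} depends only on the current entry and on four running quantities — the level $\ell$ at position $i$, the number $d$ of open double or marked edges, the parity of the number of closed edges, and the parity of the number of active $0$'s in the current block of active $0$'s — and that each of these four quantities can be updated from its predecessors and the current entry alone, without ever inspecting which earlier steps the open edges emanate from. Algorithm~\ref{AlgoBogenAlt} is then simply the reformulation that carries these four statistics forward in place of the explicit graph of (half)edges.

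First I would read off Table~\ref{TabAlgoBogen} that the row assigned to $i$ and the type of edge(s) created for $i$ are functions purely of: the entry; the level; the comparison of \enquote{$\#$ open double/marked edges $+\,1$} with the level; the parity of $\ell$ against the number of closed edges; and the parity of the number of active $0$'s. So if the four statistics are known, every table lookup is resolved. What makes this more than a tautology is that these statistics must genuinely evolve without the graph, and here I would invoke the structural facts established inside the proof of Theorem~\ref{TheoAlgoBogen}: that exactly one matching of half-edges occurs at each step introducing an incoming edge and none otherwise; that open double or marked edges are always closed before any ordinary edge and never survive to a level-zero step; that only the first active $0$ of a block can introduce a double/marked edge or an incoming-only edge; and that the order in which edges are matched is immaterial. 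Together these ensure that \enquote{an edge is closed here} and \enquote{the closed edge is double/marked rather than ordinary} are events whose occurrence is itself dictated by the four statistics and the current step.

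Next I would write down the update rules and verify that each is self-contained. The level changes by $+1$, $-1$, or $0$ according to the entry. The number of closed edges increases by exactly one precisely at the steps where Table~\ref{TabAlgoBogen} prescribes an incoming edge, so its parity updates by a rule stated in the four statistics. The count $d$ increases by one exactly at the \enquote{first active $0$} rows of the table that produce a double or marked edge, and decreases by one at each $-1$ for which $d>0$ — the incoming edge of that $-1$ being forced to close an open double/marked edge rather than an ordinary one — and both triggers are expressible through the four statistics. The active-$0$ counter is reset at every $1$ and at every non-active $0$, is initialised at the first $0$ of a block where $\ell = d+1$, and increments along the block; \enquote{being active} is again the level-versus-$d$ test. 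Hence each statistic obeys a recursion in its previous value, the other three, and the current entry, which is exactly the assertion of the corollary.

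The induction then closes: assuming after $i-1$ steps that the four statistics maintained by Algorithm~\ref{AlgoBogenAlt} equal the corresponding quantities of the partial graph built by Algorithm~\ref{AlgoBogen}, the two previous paragraphs show that both algorithms place $i$ in the same row and that the four statistics again agree after step $i$; since nothing in the final output depends on the edges beyond the rows they determine, the two algorithms return the same standard Young tableau. I expect the main obstacle to be the bookkeeping for $d$: one must check carefully that at a $-1$ the incoming edge closes a double/marked edge if and only if $d>0$ (never an ordinary one when a double/marked one is available, and conversely), and that no step other than a \enquote{first active $0$} or such a $-1$ alters $d$. This is precisely where the \enquote{closed before ordinary} and order-independence observations from the proof of Theorem~\ref{TheoAlgoBogen} are indispensable, and it is the one place where the claimed equivalence could plausibly break down.
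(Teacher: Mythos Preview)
Your overall strategy coincides with the paper's: identify the four running statistics with $h$, $odm$, $ce$, $a_0$ of Algorithm~\ref{AlgoBogenAlt}, argue via Table~\ref{TabAlgoBogen} that the row assignment is a function of them, and check that each updates autonomously. But your proposed update rule for $d$ is both incomplete and, more seriously, not what Algorithm~\ref{AlgoBogenAlt} actually does.

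First, you say $d$ increases ``exactly at the `first active $0$' rows of the table that produce a double or marked edge''. This misses the top row of the $0$-block in Table~\ref{TabAlgoBogen}: a $0$ with $d+1<\ell$ (a non-active $0$ at high level) also gets \emph{incoming and outgoing}, hence a double edge, hence increments $d$. Algorithm~\ref{AlgoBogenAlt} reflects this with the line ``$odm<h-1$: $odm=odm+1$''.

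Second, and this is the crux: for the active-$0$ case, Algorithm~\ref{AlgoBogenAlt} does \emph{not} increment $odm$ at the $0$ itself. It defers the increment to the next $1$ (``\textbf{if} $a_0$ odd \textbf{then} $odm=odm+1$'') or to the next $-1$ at level zero. The paper's proof singles this out as ``the main difference'' between the two algorithms and devotes its final paragraphs to arguing why the deferral is harmless: between the active $0$ and the terminating $1$ (or level-zero $-1$), every intervening $-1$ already has some other open double/marked edge available, so no row assignment can change; and only $-1$'s at level zero can interrupt a block of active $0$'s, which explains the ``$h=0$'' guard. Your induction never confronts this deferral, so as written it establishes only that \emph{some} four-statistic reformulation of Algorithm~\ref{AlgoBogen} exists, not that the specific code of Algorithm~\ref{AlgoBogenAlt} is one. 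To close the gap you must trace the actual $odm$-updates in Algorithm~\ref{AlgoBogenAlt} and reconcile the delayed increment with Algorithm~\ref{AlgoBogen}'s immediate one.
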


\begin{algorithm}
\label{AlgoBogenAlt}
\SetKwInOut{Input}{input}\SetKwInOut{Output}{output}
\Input{vacillating tableau $V$ in terms of a Riordan path}
\Output{standard Young tableau $Q$}
$a_0=odm=h=ce=0$\;
\For{entries $i$ in $V$ starting with the leftmost going right}{
\Switch{entry}{
\Case{$1$}{\lIf{$a_0$ odd}{odm=odm+1} $h=h+1$; $a_0=0$; put $i$ in the first row of $Q$\;}
\Case{$0$}{\Switch{$odm$}{
\lCase{$<h-1$}{$odm=odm+1$; $ce=ce+1$; put $i$ in the second row of $Q$} 
\Case{$=h-1$}{$a_0=a_0+1$\; \lIf{$h \not\equiv ce \mod 2$}{$ce=ce+1$; put $i$ in the second row of $Q$} \lElse{put $i$ in the first row of $Q$}}
\lCase{$>h-1$}{$a_0=a_0+1$; put $i$ in the first row of $Q$}
}}
\Case{$-1$}{
$h=h-1$\;
\lIf{$a_0$ odd \textbf{\emph{and}} $h=0$}{$odm=odm+1$; $a_0=0$}
\Switch{$odm$}{
\lCase{$>0$}{$odm=odm-1$; $ce=ce+1$; put $i$ in the third row of $Q$}
\Case{$=0$}{\Switch{h}{
\lCase{$>0$}{$ce=ce+1$; put $i$ in the second row of $Q$}
\Case{$=0$}{\lIf{$ce$ odd}{ $ce=ce+1$; put $i$ in the second row of $Q$}
\lElse{put $i$ in the first row of $Q$}}
}}
}}
}
}
\Return $Q$;
 \caption{vacillating tableaux to standard Young tableaux: second alternative algorithm}
\end{algorithm}

\begin{proof}
Algorithm~\ref{AlgoBogenAlt} satisfies the mentioned properties due to its formulation.

We see that Algorithm~\ref{AlgoBogenAlt} is equivalent to Algorithm~\ref{AlgoBogen} when pointing out that:
\begin{itemize}
\item $a_0$ in Algorithm~\ref{AlgoBogenAlt} is the number of active $0$'s in Algorithm~\ref{AlgoBogen}.
\item $odm$ in Algorithm~\ref{AlgoBogenAlt} is the number of open double or marked edges from Algorithm~\ref{AlgoBogen}.
\item $h$ in Algorithm~\ref{AlgoBogenAlt} is the height before some part of the path. Thus for a $0$ or a $-1$ it is the level and for a $1$ it is the level minus one. Although we use the level in Algorithm~\ref{AlgoBogen}, this does not matter as the neither the height nor the level is considered for a $1$ in any of the two algorithms.
\item $ce$ is the number of closed edges in Algorithm~\ref{AlgoBogen}.
\end{itemize}

The main difference between Algorithm~\ref{AlgoBogen} and~\ref{AlgoBogenAlt} is that in the latter the decision if an active $0$ has an outgoing edge such that this is part of a double edge or a marked edge, is done at the next $1$ or the next $-1$ where the path touches the bottom. However, this makes no difference because for active $0$'s it holds that for every $-1$ that is between them, there is another open double or marked edge that can connect with the $-1$. Besides, only $-1$'s on level zero interrupt a sequence of active $0$'s. (This explains the $h=0$ condition for $odm=odm+1$.)

Moreover, only one active $0$'s can have a double or marked edge. This holds because once this occurs in the original algorithm, the number of those edges increases and so the last case for $0$'s occurs until the next $1$. 
\end{proof}

\begin{remark}
One might think that this could be again rewritten in terms of an automaton. However as we need $h$ and $odm$ to be any positive natural number this could not be realized with a finite one. (As we need only the parity of the other statistics those could be implemented as boolean.)
\end{remark}

\begin{remark}
Furthermore, due to Corollary~\ref{BogenCor1} we get:
\begin{itemize}
\item For a vacillating tableau of any shape Algorithm~\ref{AlgoBogenAlt} produces the right standard Young tableau. For the information on the orthogonal Littlewood-Richardson tableau, we need to add $-1$'s until shape $\emptyset$ is reached as we do in the reversed bijection.
\item For vacillating tableaux of even length and shape $\emptyset$ we do not need to add $1,0,-1$ and delete the corresponding entries of the standard Young tableau afterwards.
\end{itemize}
\end{remark}

\begin{corollary}
\label{InsertionCoro}
Algorithm~\ref{AlgoBogenAlt} provides an insertion algorithm in the sense of~\cite[Theorem 6.2 (2)]{MR899903}: if the vacillating tableau $(v_1,v_2,\dots,v_{m-1},v_m)$ is mapped to the standard Young tableau $Q$ of size $m$, the vacillating tableau $(v_1,v_2,\dots,v_{m-1})$ is mapped to the standard Young tableau $\bar{Q}$ of size $m-1$ which is obtained by deleting the biggest entry $m$.
\end{corollary}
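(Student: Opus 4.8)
The plan is to exploit the fact that Algorithm~\ref{AlgoBogenAlt} is \emph{online}: it reads the entries $v_1,v_2,\dots$ from left to right, and when it reaches $v_i$ it decides the row of $i$ using only the current entry $v_i$ together with the four integer statistics $a_0,odm,h,ce$, which are updated incrementally. In particular, before processing $v_i$ the values of these statistics depend only on $v_1,\dots,v_{i-1}$. This is exactly the content of Corollary~\ref{BogenCor1}, and it is the feature that will make the truncation property transparent.

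First I would make precise how the output tableau is recorded. The entries $1,2,\dots,m$ are inserted in increasing order, and the instruction ``put $i$ in row $j$'' can only mean appending $i$ at the right end of row $j$, since within a standard Young tableau each row increases from left to right. Hence the tableau $Q$ returned by Algorithm~\ref{AlgoBogenAlt} is completely determined by the sequence $(j_1,\dots,j_m)$ of row assignments: row $j$ of $Q$ is the increasing list of those $i$ with $j_i=j$.

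Next I would observe that running Algorithm~\ref{AlgoBogenAlt} on $(v_1,\dots,v_{m-1})$ executes literally the same first $m-1$ iterations of the main loop as running it on $(v_1,\dots,v_m)$. Indeed, the initial state $a_0=odm=h=ce=0$ is the same, and since each iteration's action and state update depend only on the current entry and the current state, an induction on $i\le m-1$ shows that the state before step $i$, and therefore the assignment $j_i$, agrees in the two runs. Consequently the tableau $\bar Q$ produced from $(v_1,\dots,v_{m-1})$ has, in each row $j$, exactly the increasing list of those $i\le m-1$ with $j_i=j$; that is, $\bar Q$ is obtained from $Q$ by erasing the entry $m$.

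It remains to check that this erasure yields a standard Young tableau, i.e. that the cell of $Q$ carrying $m$ is a removable corner. But $Q$ is a standard Young tableau by the output guarantee of the algorithm (via Theorem~\ref{TheoAlgoBogen} together with the remark preceding Corollary~\ref{InsertionCoro} that Algorithm~\ref{AlgoBogenAlt} produces the correct standard Young tableau for vacillating tableaux of any shape), and in any standard Young tableau the largest entry occupies a cell with no cell to its right and none below it, since otherwise that neighbouring cell would carry a strictly larger entry. Hence removing $m$ leaves a standard Young tableau of straight shape, so $\bar Q$ is the well-defined image of $(v_1,\dots,v_{m-1})$, as claimed. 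I do not expect a genuine obstacle here: the only points needing care are the bookkeeping that the four statistics are functions of the already-processed prefix and that row insertions are appends, and both are immediate from the formulation of Algorithm~\ref{AlgoBogenAlt}.
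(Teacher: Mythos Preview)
Your argument is correct and is exactly the reasoning the paper intends: the corollary is stated without proof because it follows immediately from Corollary~\ref{BogenCor1}, namely that Algorithm~\ref{AlgoBogenAlt} processes the entries $v_1,v_2,\dots$ from left to right and the row of $i$ is determined by $v_i$ together with statistics depending only on the prefix $v_1,\dots,v_{i-1}$. Your additional remark that the largest entry of a standard Young tableau always sits in a removable corner, so that $\bar Q$ is again standard, is a welcome clarification that the paper leaves implicit.
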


\section{Outlook}

\subsection{Sketch of the extension to $\SO(2k+1)$}

For higher dimensions we consider vacillating tableaux as tuples of paths with certain dependencies. We use an inductive approach to obtain an insertion algorithm for these paths. We start with the first row as in the $\SO(3)$ case. Then we insert rows in pairs using a variant of the insertion algorithm for the second and third row presented here.
However we need to change the \enquote{separating} and \enquote{connecting} parts of the algorithm due to the dependencies between the paths.

As this algorithm is far more complicated it will be presented in another paper.

\subsection{Open problems}

It would be interesting to do the same for the vector representation of other groups:
\begin{itemize}

\item $\SOn$, $n$ even, vector representation - there is already a descent set conjectured by Rubey.
\item $\mathrm{G}_2$, vector representation - there is already a descent set conjectured by Rubey.
\item $\mathrm{Sp}(n)$, $n$ even, vector representation using  Kwon's symplectic Littlewood-Richardson tableaux instead of Sundaram's symplectic Littlewood-Richardson tableaux.
\end{itemize}

\section*{Acknowledgements}

The author would like to thank Martin Rubey for valuable discussions and helpful comments on preliminary versions of this article.
The author also thanks Sheila Sundaram for her ongoing interest on this work and for her helpul comments and also Cristian Lenart for his helpful comments.
Moreover the author thanks an anonymous referee for a very careful reading of the paper and corrections and comments.
We used the computer algebra system SageMath to implement the algorithms and test examples.



\clearpage
\appendix
\section{Appendix}

\begin{table}[h]
\centering
\caption{List of all tableaux with $n=3$, $r\leq 3$.
Note that there is not necessarily an orthogonal Littlewood-Richardson tableau for every combination of $\mu\leq\lambda$. Moreover, if we have an orthogonal Littlewood-Richardson tableau consisting of only one column, it is important if this is the tail or the single column tableau. 
}
\label{TabListExample}

\caption{Possible insertion scenarios}
\label{FigScenarios}
\end{figure}

\end{document}